\numberwithin{equation}{section}
\newtheorem{theorem}{Theorem}
\newtheorem{corollary}{Corollary}
\newtheorem{lemma}{Lemma}
\newtheorem{proposition}{Proposition}
\newtheorem{definition}{Definition}
\newcommand{\K}{\mathcal{K}}
\newcommand{\N}{\mathbb{N}}
\newcommand{\Z}{\mathbb{Z}}
\newcommand{\R}{\mathbb{R}}
\newcommand{\C}{\mathbb{C}}
\newcommand{\Cu}{\mathrm{Cu}}
\newcommand{\CCu}{\mathbf{Cu}}
\newcommand{\Hom}{\mathrm{Hom}}
\newcommand{\Mor}{\mathrm{Mor}}
\newcommand{\id}{\mathrm{id}}
\newcommand{\E}{\mathrm{E}}
\newcommand{\Ad}{\mathrm{Ad}}
\newcommand{\M}{\mathrm{M}}
\title{On inductive limits of type I C*-algebras with one-dimensional spectrum}
\author{Alin Ciuperca, George A. Elliott, \and Luis Santiago}
\address{Alin Ciuperca, Department of Mathematics and Statistics\\
         University of New Brunswick\\
     Fredericton,  New Brunswick E3B 5A3, Canada}
\email{alinc@unb.ca}    
\address{George A. Elliott, Department of Mathematics\\
         University of Toronto\\
         Toronto, Ontario M5S 2E4, Canada}
\email{elliott@math.toronto.edu} 
\address{Luis Santiago, Departament de Matem\`atiques \\Edifici C,
         Universitat Aut\`onoma de Barcelona\\ Bellaterra, Barcelona 08193, Spain}
\email{santiago@mat.uab.cat}
\begin{document}
\begin{abstract}
The class of separable C*-algebras which can be written as inductive limits of continuous-trace C*-algebras with spectrum homeomorphic to a disjoint union of trees and trees with a point removed is classified by the Cuntz semigroup.
\end{abstract}

\maketitle

\section{Introduction}
In recent years the Cuntz semigroup has become an important tool of investigation in the theory of C*-algebras, particularly in the work related to the Elliott classification program. As an invariant, it plays a significant role in the theory of both simple and non-simple C*-algebras (see \cite{toms}, \cite{c-e}, \cite{ciuperca}, \cite{santiago}). In this paper, we shall show that the Cuntz semigroup is effective as an invariant for the class of C*-algebras that can be expressed as the inductive limit of a sequence
\[
A_1\rightarrow A_2\rightarrow A_3\rightarrow\cdots
\]
where each building block $A_i$ is a separable continuous-trace C*-algebra with spectrum homeomorphic to a disjoint union of trees and trees with a point removed. The term tree will refer to a contractible one-dimensional finite CW complex, in other words, a contractible space obtained from a finite discrete space $\mathrm{V}$ whose elements we shall call vertices, by attaching a finite collection $\mathrm{E}$ of $1$-cells, which we shall call edges. Without loss of generality we may assume that a tree is a subspace of the plane.

Our work can be viewed as a continuation of a number of previous investigations in the classification of C*-algebras.  The problem of classifying inductive limits of continuous trace C*-algebras with spectrum homeomorphic to the closed interval $[0,1]$  was addressed, and results were obtained, in \cite{e-ivan} in the simple case, and in \cite{c-e} in the not necessarily simple case.

A particular class of C*-algebras classified in our paper are the C*-algebras obtained as sequential inductive limits of building blocks of the form $\bigoplus_{k=1}^{N}\M_{m_k}(\mathrm{C}_0(X_k))$, where the spaces $X_k$ are trees or trees with a point removed. The case that the spaces $X_k$ are compact received considerable attention in the more classical framework of classification of simple approximately homogeneous (AH) algebras, where usage of the classical Elliott invariant led to remarkable results. The first such result, the classification of approximate interval (AI) algebras, was obtained by one of the present authors in \cite{elliott-AI}, where the situation $X_k=[0,1]$ was treated. Important generalizations of this result were obtained, in two different directions. On the one hand, the requirement of simplicity for the inductive limit was kept in \cite{Li}, where Li extended the classification to the case where the spaces $X_k$ are trees. On the other hand, classification results for certain classes of non-simple AI algebras were obtained by Stevens in \cite{stevens} and by Robert in \cite{robert}.

In \cite{c-e}, two of the present authors showed that, for C*-algebras of stable rank one, the Cuntz invariant and another C*-algebra invariant---the Thomsen semigroup---determine each other, in a natural way. Using Thomsen's classification result \cite{thomsen}, it was inferred that the Cuntz semigroup is a complete invariant for the class of separable approximate interval algebras. It became apparent that the Cuntz semigroup is a good candidate to be considered in the classification of not necessarily simple C*-algebras.

The main result of the present paper states that the Cuntz semigroup functor classifies the $\ast$-homomorphisms from a  sequential inductive limit of separable continuous-trace C*-algebras with spectrum homeomorphic to a disjoint union of trees and trees with a point removed to a C*-algebra of stable rank one:

\begin{theorem}\label{homomorphism}
Let $A$ be a sequential inductive limit of separable continuous-trace C*-algebras with spectrum homeomorphic to a disjoint union of trees and trees with a point removed. Let $B$ be a C*-algebra of stable rank one. Suppose that there is a Cuntz morphism $\alpha\colon \mathrm{Cu}(A)\to \mathrm{Cu}(B)$ such that $\alpha[s_A]\le [s_B]$, where $s_A$ and $s_B$ are a strictly positive element of $A$ and any positive element of $B$, respectively. It follows that there exists a $\ast$-homomorphism $\phi\colon A\to B$, unique up to approximate unitary equivalence, such that $\mathrm{Cu}(\phi)=\alpha$.
\end{theorem}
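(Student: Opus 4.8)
**The plan is to reduce the theorem to two standard ingredients: a one-sided approximate existence/uniqueness result at the level of building blocks, and an inductive-limit intertwining argument.** Since $A = \varinjlim A_i$ with each $A_i$ a separable continuous-trace C*-algebra whose spectrum is a disjoint union of trees and trees with a point removed, by functoriality and continuity of $\mathrm{Cu}$ it suffices to produce, for each $i$, a $*$-homomorphism $\phi_i\colon A_i \to B$ whose Cuntz class approximates the restriction $\alpha\circ\mathrm{Cu}(\iota_{i,\infty})$, together with the uniqueness needed to make these maps coherent in the limit. So the first task is to establish the key local statement:

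\textbf{Existence at the building-block level.} Given $D$ a separable continuous-trace C*-algebra with spectrum a disjoint union of trees (and trees with a point removed), $B$ of stable rank one, and a Cuntz morphism $\gamma\colon \mathrm{Cu}(D)\to\mathrm{Cu}(B)$ compatible with the relevant scale inequality, I would construct a $*$-homomorphism $\psi\colon D\to B$ with $\mathrm{Cu}(\psi)$ equal to (or, in an approximate sense, close to) $\gamma$. The strategy here is first to handle the homogeneous case $D = \mathrm{M}_m(\mathrm{C}_0(X))$ with $X$ a tree: because a tree is one-dimensional and contractible, vector bundles over $X$ are trivial, and one can decompose the tree into intervals and use the interval case of \cite{c-e} (equivalently, the Thomsen-semigroup machinery relating $\mathrm{Cu}$ to the Thomsen semigroup in stable rank one) edge by edge, gluing at the vertices. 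The general continuous-trace case is then obtained by approximating $D$ by subalgebras of the homogeneous type (a continuous-trace algebra with one-dimensional spectrum being locally, and up to the relevant approximation, of this form), using the fact that $\mathrm{Cu}$ sends such approximating sequences to the right inductive limit.

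\textbf{Uniqueness at the building-block level.} In the same setting I would prove that two $*$-homomorphisms $\psi, \psi'\colon D\to B$ with $\mathrm{Cu}(\psi)=\mathrm{Cu}(\psi')$ are approximately unitarily equivalent. Again one reduces to $D$ homogeneous over a tree, then to $D$ over an interval, where this is precisely the uniqueness half of the classification underlying \cite{c-e}; the tree is assembled from intervals and the uniqueness is propagated across vertices using that the unitaries involved can be chosen to agree on the overlaps (using connectedness of the unitary group in the relevant coronas / stability). One must also upgrade to an approximate-factorization form: any two approximately-agreeing morphisms differ by an approximately-inner perturbation, which is the form needed to run the intertwining.

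\textbf{Assembling the limit.} With these in hand, the existence of $\phi$ follows from a one-sided approximate intertwining: I build $\phi_i\colon A_i\to B$ recursively so that $\mathrm{Cu}(\phi_{i+1})\circ\mathrm{Cu}(\iota_{i,i+1})$ is approximately $\mathrm{Cu}(\phi_i)$ to within tolerances summable along the sequence, apply the building-block existence result to get the $\phi_i$ and the building-block uniqueness result to correct $\phi_{i+1}\circ\iota_{i,i+1}$ back toward $\phi_i$ by an inner automorphism of $B$, and pass to the limit to obtain $\phi\colon A\to B$ with $\mathrm{Cu}(\phi)=\alpha$ (using continuity of $\mathrm{Cu}$ and that $\alpha[s_A]\le[s_B]$ to keep everything inside $B$ rather than a matrix amplification). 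The uniqueness of $\phi$ up to approximate unitary equivalence follows from the same machine: given $\phi,\phi'$ with $\mathrm{Cu}(\phi)=\mathrm{Cu}(\phi')$, their restrictions to each $A_i$ have equal Cuntz class up to the connecting maps, hence are approximately unitarily equivalent by the building-block uniqueness, and an Elliott-type two-sided intertwining of unitaries yields a single approximate unitary equivalence $\phi\sim\phi'$.

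\textbf{The main obstacle.} The genuinely hard part is the building-block existence and uniqueness over a tree with the precise control needed for the intertwining — in particular arranging that the local homomorphisms and correcting unitaries match up across the vertices of the tree, and handling the noncompact pieces (trees with a point removed, corresponding to $\mathrm{C}_0$ rather than $\mathrm{C}$) where strict positivity and the scale inequality $\alpha[s_A]\le[s_B]$ must be used carefully. The passage from homogeneous algebras over trees to general continuous-trace algebras with one-dimensional spectrum, while conceptually a perturbation argument, also requires care to ensure the relevant approximations are compatible with $\mathrm{Cu}$; I expect this to be where most of the technical work lies.
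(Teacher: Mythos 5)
Your overall architecture---reduce to building blocks by an approximate intertwining, reduce the building blocks to the commutative case using one-dimensionality of the spectrum, and then prove existence and uniqueness over a single tree---is the same skeleton the paper uses (its Proposition on the property (P) does the bookkeeping for inductive limits, direct sums, matrix amplifications and stable isomorphism, and the Dixmier--Douady theorem, i.e.\ $\mathrm{H}^3(\hat A,\Z)=0$, gives stable isomorphism with $\mathrm{C}_0(\hat A)$; note that the correct reduction is to \emph{stable} isomorphism with a commutative algebra, not to a global matrix-bundle trivialization as your ``vector bundles over $X$ are trivial'' remark suggests). However, there is a genuine gap exactly where you yourself locate ``the main obstacle'': the existence and uniqueness of homomorphisms $\mathrm{C}_0(X\setminus v)\to B$ over a tree. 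Your plan---treat each edge by the interval case of \cite{c-e} and ``glue at the vertices,'' choosing the unitaries ``to agree on the overlaps''---does not work as stated. The subalgebras of $\mathrm{C}_0(X\setminus v)$ attached to distinct edges meeting at a vertex are orthogonal rather than overlapping, and the constraint that must be propagated is the nesting relation $g_{e}g_{e'}=g_{e'}$ for $e'$ next to $e$: the correcting unitary for a deeper edge must be found \emph{inside} the hereditary subalgebra generated by the image of the parent edge's generator, and an interval-by-interval application of the $[0,1]$ result gives no control of this kind. The paper overcomes this with two substantial pieces of machinery that your sketch does not supply: (1) a presentation of $\mathrm{C}_0(X\setminus v)$ by generators and relations (one generator per edge) together with an induction on the number of edges proving the Lipschitz comparison $d_W^{(X,v)}\le d_U^{(X,v)}\le (2N+2)\,d_W^{(X,v)}$, where the inductive step cuts the tree near the root, works in the hereditary subalgebras $\overline{\psi(g_{e_i})A\psi(g_{e_i})}$, and corrects by a partial isometry obtained from Cuntz subequivalence in a stable-rank-one algebra; and (2) an existence argument that builds an approximate representation of these generators and relations realizing the prescribed Cuntz classes at dyadic cut-down levels, and then invokes \emph{weak stability} of the relations to perturb it to an exact representation. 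Without an argument at this level of precision, the ``gluing'' step of your proposal is an assertion rather than a proof, and the uniqueness constant needed to run the intertwining is not established.
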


It follows from this theorem that the invariant consisting of the Cuntz semigroup together with a distinguished element of it classifies sequential inductive limits of separable continuous-trace C*-algebras with spectrum homeomorphic to a disjoint union of trees and trees with a point removed:
\begin{corollary}\label{classification}
Let $A$ and $B$ be sequential inductive limits of separable continuous-trace C*-algebras with spectrum homeomorphic to a disjoint union of trees and trees with a point removed. Let $s_A$ and $s_B$ be strictly positive elements of $A$ and $B$, respectively. Suppose that there is a Cuntz semigroup isomorphism $\alpha\colon \mathrm{Cu}(A)\to \mathrm{Cu}(B)$ such that $\alpha[s_A]=[s_B]$. It follows that there exists a $\ast$-isomorphism $\phi\colon A\to B$, unique up to approximate unitary equivalence, such that $\mathrm{Cu}(\phi)=\alpha$.
\end{corollary}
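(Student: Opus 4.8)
The plan is to deduce Corollary~\ref{classification} from Theorem~\ref{homomorphism} by the two-sided approximate intertwining argument of Elliott. The two facts that make Theorem~\ref{homomorphism} usable in both directions are that $A$ and $B$ are separable and that they have stable rank one. Separability is part of the hypothesis. For stable rank one, one records that a separable continuous-trace C*-algebra with spectrum of covering dimension at most one has stable rank one --- locally it is of the form $\M_n\otimes\mathrm{C}_0(U)$ with $\dim U\le 1$, and $\mathrm{sr}(\M_n(\mathrm{C}_0(X)))=1$ whenever $\dim X\le 1$ --- and that stable rank one passes to sequential inductive limits; hence both $A$ and $B$ have stable rank one, and Theorem~\ref{homomorphism} applies with either of $A$, $B$ in the role of the stable-rank-one target.

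First, apply Theorem~\ref{homomorphism} to $\alpha\colon\mathrm{Cu}(A)\to\mathrm{Cu}(B)$, for which $\alpha[s_A]=[s_B]\le[s_B]$: this yields a $\ast$-homomorphism $\phi\colon A\to B$ with $\mathrm{Cu}(\phi)=\alpha$. Applying it to $\alpha^{-1}\colon\mathrm{Cu}(B)\to\mathrm{Cu}(A)$, for which $\alpha^{-1}[s_B]=[s_A]\le[s_A]$, yields $\psi\colon B\to A$ with $\mathrm{Cu}(\psi)=\alpha^{-1}$. By functoriality of $\mathrm{Cu}$,
\[
\mathrm{Cu}(\psi\circ\phi)=\alpha^{-1}\circ\alpha=\id_{\mathrm{Cu}(A)}=\mathrm{Cu}(\id_A),\qquad
\mathrm{Cu}(\phi\circ\psi)=\id_{\mathrm{Cu}(B)}=\mathrm{Cu}(\id_B).
\]
Since $A$ lies in the class and $B$ has stable rank one (and symmetrically), the uniqueness clause of Theorem~\ref{homomorphism}, applied to the self-maps $A\to A$ and $B\to B$, shows that $\psi\circ\phi$ is approximately unitarily equivalent to $\id_A$ and $\phi\circ\psi$ is approximately unitarily equivalent to $\id_B$.

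Now feed $\phi$ and $\psi$ into Elliott's intertwining argument. From the chain
\[
\begin{CD}
A @>\phi>> B @>\psi>> A @>\phi>> B @>>> \cdots
\end{CD}
\]
in which each consecutive composite is approximately unitarily equivalent to an identity map, separability of $A$ and $B$ lets one perturb the connecting maps by unitaries (in the unitizations $\widetilde A$, $\widetilde B$) so that the two inductive systems --- whose limits are $A$ and $B$ --- become isomorphic; this produces a $\ast$-isomorphism $\Phi\colon A\to B$ that is approximately unitarily equivalent to $\phi$. Since $\mathrm{Cu}$ does not distinguish approximately unitarily equivalent $\ast$-homomorphisms, $\mathrm{Cu}(\Phi)=\mathrm{Cu}(\phi)=\alpha$. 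Uniqueness then needs nothing new: if $\Phi'\colon A\to B$ is another $\ast$-isomorphism with $\mathrm{Cu}(\Phi')=\alpha$, then $\Phi$ and $\Phi'$ are both $\ast$-homomorphisms from the class-member $A$ into the stable-rank-one algebra $B$ inducing $\alpha$, so the uniqueness clause of Theorem~\ref{homomorphism} forces $\Phi$ and $\Phi'$ to be approximately unitarily equivalent.

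The only genuinely delicate step is running the intertwining in the merely $\sigma$-unital setting: the back-and-forth construction must be carried out over a countable dense subset while keeping track of the approximating unitaries in the unitizations (or multiplier algebras) and checking that the normalization $\mathrm{Cu}(\,\cdot\,)[s_A]\le[s_B]$ is preserved along the chain, so that the limit map is well defined, bijective, and still induces $\alpha$. Recording the stable-rank-one property of the building blocks (hence of $A$ and $B$) is the other routine but necessary ingredient.
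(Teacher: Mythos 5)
Your proof is correct and follows essentially the same route as the paper: apply Theorem~\ref{homomorphism} to $\alpha$ and to $\alpha^{-1}$, use the uniqueness clause to conclude that $\psi\circ\phi$ and $\phi\circ\psi$ are approximately unitarily equivalent to the respective identity maps, and then invoke Elliott's two-sided approximate intertwining (the paper cites Theorem 3 of \cite{elliott-cla} for exactly this step) to produce an isomorphism approximately unitarily equivalent to $\phi$, which therefore still induces $\alpha$. Your additional verification that $A$ and $B$ themselves have stable rank one --- needed so that Theorem~\ref{homomorphism} can be applied with either algebra as the target --- is a point the paper leaves implicit, and your justification of it is correct.
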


Note that, as a consequence of Corollary \ref{classification} and Li's result \cite{Li}, the Cuntz semigroup and the Elliott invariant both classify the simple sequential inductive limits of building blocks of the form $\bigoplus_{k=1}^{N}\M_{m_k}(\mathrm{C}_0(X_k))$, where the spaces $X_k$ are trees. This is not surprising, since for a large variety of simple C*-algebras, the Cuntz semigroup can be recovered functorially from the Elliott invariant, as proved in \cite{b-p-t} and, in the non-unital case, \cite{e-r-s}.

\section{Preliminary definitions and results}
\subsection{The Cuntz semigroup}
For a C*-algebra $A$, let us denote by $A^+$ the set of positive elements of $A$, and by $\widetilde A$ the unitization of $A$. The following definition of the Cuntz semigroup is different from the original definition given in \cite{Cuntz}, in that, in addition to positive elements in matrix algebras over $A$, also positive elements in $A\otimes \K$ are considered ($\K$ denotes the algebra of compact operators on a separable Hilbert space). As shown in \cite{c-e-i}, this form of the definition is very useful. (The relation between the two definitions for a non-stable algebra is not known, except---see \cite{c-e-i}---in the case of stable rank one.)

\begin{definition} Let $a$ and $b$ be positive elements of $A\otimes \K$. Let us say that $a$ is
Cuntz smaller than $b$, denoted by $a\preccurlyeq b$, if there exists a sequence $(d_n)_{n\in \N}$ in $A\otimes \K$ such that
$d_nbd_n^*\to a$. The elements $a$ and $b$ are called Cuntz
equivalent, written $a\sim b$, if $a\preccurlyeq b$ and $b\preccurlyeq a$.
\end{definition}

It is immediate that $\preccurlyeq$ is a pre-order on the set of positive elements of $A\otimes \K$, so that $\sim$ is an equivalence relation. Given $a\in (A\otimes\K)^+$ let us denote by $[a]$ the Cuntz equivalence class of $a$. The Cuntz semigroup of $A$, denoted by $\Cu(A)$, is defined as the set of equivalence classes of positive elements of $A$ endowed with the order derived from the pre-order relation $\preccurlyeq$ (so that $[a]\le [b]$ if $a\preccurlyeq b$), and the addition operation
\begin{align*}
[a] + [b] := \left[\left (\begin{array}{cc} a & 0\\ 0 & b \end{array} \right)\right],
\end{align*}
where the positive element inside the brackets in the right side of the equation above is identified with its image in $A\otimes \K$ by any isomorphism of $\M_2(A\otimes \K)$ with $A\otimes \K$ induced by an isomorphism of $\K$ and $\M_2(\K)$. If $\phi\colon A\to B$ is a $\ast$-homomorphism from the C*-algebra $A$ to the C*-algebra $B$, then it induces an ordered semigroup morphism $\Cu(\phi)\colon \Cu(A)\to \Cu(B)$ defined on the Cuntz equivalence class of a positive element $a\in A\otimes \K$ by $\Cu(\phi)[a]=[(\phi\otimes\id)(a)]$, where $\id\colon \K\to\K$ denotes the identity operator on $\K$.

It was shown in \cite{c-e-i} that $\Cu(A)$ is an object in the the category $\CCu$ of ordered abelian semigroups
with zero with the following additional properties:

(i) every increasing sequence has a supremum;

(ii) each element is the supremum of a rapidly increasing sequence, by which is meant a sequence such that each term is compactly
contained in the next, where we say that an element $x$ is compactly contained in an element $y$, and write
$x\ll y$, if whenever $y\leq \sup y_n$ for an increasing sequence $(y_n)_{n\in \N}$, then eventually $x\leq y_n$;

(iii) the operation of passing to the supremum of an increasing sequence and the relation $\ll$ of compact containment are
compatible with addition.

The maps in the category $\CCu$ are ordered semigroup maps preserving the zero element, suprema of
increasing sequences, and the relation of compact containment. In Theorem 2 of \cite{c-e-i}, the
authors prove that the Cuntz semigroup constitutes a functor from the category of C*-algebras
to the category $\CCu$, and that this functor preserves inductive limits of sequences.

We will also make use of another, stronger, equivalence relation among positive elements of a C*-algebra.

\begin{definition}
Let $a$ and $b$ be positive elements of a C*-algebra $A$. We will say that $a$ is Murray-von Neumann equivalent to $b$ if there exists $x\in A$ such that $a=x^*x$ and $b=xx^*$.
\end{definition}

The following result (the proof of which is included for the convenience of the reader) will be useful on several occasions.

\begin{lemma}\label{ineq} Let $A$ be a C*-algebra. Let $a$ and $b$ be positive elements of $A$ such that $\|a-b\|\le\epsilon$.
Then
\[
\|a^{1/2}-b^{1/2}\|\le\sqrt{\epsilon}.
\]
\end{lemma}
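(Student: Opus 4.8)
The plan is to derive the inequality from the operator monotonicity of the square-root function, together with an elementary scalar estimate. Passing to the unitization $\widetilde{A}$ if necessary, we may assume that $A$ is unital; this changes neither the square roots nor the norms involved. Set $\epsilon=\|a-b\|$, and assume $\epsilon>0$ (otherwise $a=b$ and there is nothing to prove). Since $a-b$ is self-adjoint with $\|a-b\|\le\epsilon$, we have $a-b\le\epsilon\cdot 1$, that is, $a\le b+\epsilon\cdot 1$.

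First I would invoke the fact that $t\mapsto t^{1/2}$ is operator monotone on $[0,\infty)$: applied to $0\le a\le b+\epsilon\cdot 1$ this yields $a^{1/2}\le (b+\epsilon\cdot 1)^{1/2}$. (If a self-contained treatment is wanted --- and since the lemma is stated with its proof included --- operator monotonicity of the square root can be reproduced in a few lines from the fact that $x\mapsto -(s\cdot 1+x)^{-1}$ is operator monotone for each $s>0$, which is just the antitonicity of inversion on the positive invertible elements, combined with the integral representation $t^{1/2}=\pi^{-1}\int_0^\infty \frac{t}{s+t}\,s^{-1/2}\,ds$.) Next comes the scalar estimate: for every $t\ge 0$,
\[
(t+\epsilon)^{1/2}-t^{1/2}=\frac{\epsilon}{(t+\epsilon)^{1/2}+t^{1/2}}\le\frac{\epsilon}{\epsilon^{1/2}}=\epsilon^{1/2},
\]
since the denominator is at least $\epsilon^{1/2}$. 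Applying the continuous functional calculus to the positive element $b$ then gives $(b+\epsilon\cdot 1)^{1/2}\le b^{1/2}+\epsilon^{1/2}\cdot 1$.

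Combining the two steps, $a^{1/2}-b^{1/2}\le\epsilon^{1/2}\cdot 1$; interchanging the roles of $a$ and $b$ gives $b^{1/2}-a^{1/2}\le\epsilon^{1/2}\cdot 1$ as well. Since $a^{1/2}-b^{1/2}$ is self-adjoint, these two bounds together say precisely that $\|a^{1/2}-b^{1/2}\|\le\epsilon^{1/2}=\sqrt{\epsilon}$, which is the assertion. The only ingredient that is not a routine manipulation of the functional calculus of a single positive element is the operator monotonicity of the square root; the main (and essentially the only) decision point is whether to cite this classical fact or to include its short proof.
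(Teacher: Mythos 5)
Your proof is correct and follows essentially the same route as the paper's: from $\|a-b\|\le\epsilon$ deduce $a\le b+\epsilon\cdot 1$ in the unitization, apply operator monotonicity of the square root (the paper cites Proposition 1.3.8 of Pedersen for exactly this, together with the bound $(b+\epsilon\cdot 1)^{1/2}\le b^{1/2}+\sqrt{\epsilon}\cdot 1$, which you justify by the scalar estimate and functional calculus), and conclude by symmetry. The only difference is that you spell out the two ingredients the paper delegates to the citation; no gap.
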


\begin{proof}
Since $\|a-b\|\le\epsilon$, we have
$a\le b+\epsilon 1_{\widetilde{A}}$, where $1_{\widetilde{A}}$ denotes the unit of the unitization of $A$. By Proposition 1.3.8 of \cite{Pedersen} we have
\[
a^{1/2}\le (b+\epsilon 1_{\widetilde{A}})^{1/2}\le b^{1/2}+1_{\widetilde{A}}\sqrt{\epsilon}.
\]
By symmetry,
\[
b^{1/2}\le a^{1/2}+1_{\widetilde{A}}\sqrt{\epsilon}.
\]
Therefore,
\[
\|a^{1/2}-b^{1/2}\|\le\sqrt{\epsilon}.
\]
\end{proof}

The following proposition is a restatement in terms of positive elements of Theorem 3 of \cite{c-e-i}. For the convenience of the reader we include a proof of this statement.

\begin{lemma}\label{x_y}
Let $A$ be C*-algebra, and let $B$ be a hereditary subalgebra of $A$ of stable rank one. Let $\delta>0$. If $x,y\in A$ are such that $xx^*,yy^*\in B$, $x^*x\in \overline{y^*Ay}$, and
\begin{align}\label{xy}
\|x^*x-y^*y\|<\delta,
\end{align}
then there exists a unitary $U$ in the unitization of $B$ such that
\[
\|x-Uy\|<\sqrt{\delta}.
\]

\end{lemma}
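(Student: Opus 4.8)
The plan is to reduce the desired norm estimate to a statement about square roots by means of Lemma~\ref{ineq}, and then to build $U$ from the polar decompositions of $x$ and $y$, the essential point being to use stable rank one of $B$ to replace a partial isometry living in $A^{**}$ by an honest unitary of the unitization of $B$. (As indicated above, the statement is essentially a reformulation of Theorem~3 of \cite{c-e-i}, and the proof runs parallel to that of the latter.)

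First, since $\|x^*x-y^*y\|<\delta$, Lemma~\ref{ineq} gives $\|(x^*x)^{1/2}-(y^*y)^{1/2}\|<\sqrt\delta$. Write $h_x=(x^*x)^{1/2}$ and $h_y=(y^*y)^{1/2}$, and fix $\gamma>0$ with $\|h_x-h_y\|+\gamma<\sqrt\delta$. Passing to $A^{**}$, let $x=vh_x$ and $y=wh_y$ be the polar decompositions and put $u=vw^*$. Two observations are used here. First, the range projections $vv^*$ and $ww^*$ are the support projections of $xx^*$ and $yy^*$, so, since $xx^*,yy^*\in B$, they lie in $B^{**}$; in fact they are open projections of $B$. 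Second, the hypothesis $x^*x\in\overline{y^*Ay}=\mathrm{her}(y^*y)$ forces the initial projection $v^*v=\mathrm{supp}(x^*x)$ to be dominated by $w^*w=\mathrm{supp}(y^*y)$. From these one checks that $u$ is a partial isometry with
\[
uu^*=vv^*,\qquad u^*u=w(v^*v)w^*\le ww^*,\qquad uw=v,
\]
so that $uy=(uw)h_y=vh_y$ and therefore
\[
\|x-uy\|=\|v(h_x-h_y)\|\le\|h_x-h_y\|<\sqrt\delta-\gamma .
\]

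It remains to produce a unitary $U\in\widetilde B$ with $\|Uy-uy\|<\gamma$; then $\|x-Uy\|\le\|x-uy\|+\|uy-Uy\|<\sqrt\delta$, which is the conclusion. This is the heart of the matter and the place where stable rank one of $B$ is indispensable. The obstruction to $u$ itself being (the action on $y$ of) a unitary of $\widetilde B$ lies entirely on the part of $y$ on which $h_y$ is small — precisely the summand of $y$ belonging to $w(w^*w-v^*v)A^{**}$, which becomes norm-small after a cut-down. One therefore replaces $y$ by a cut-down $y_\eta$ with $y_\eta^*y_\eta=(y^*y-\eta)_+$, so that $y_\eta y_\eta^*=(yy^*-\eta)_+\in\mathrm{her}(yy^*)\subseteq B$ and $\|y-y_\eta\|\le\sqrt\eta$, and chooses $\eta$ (and, similarly, a cut-down level $\epsilon$ for $x$) small enough that the support projections now in play are support projections of compactly supported positive elements of $B$. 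For such ``compact'' data the partial isometry $u$ does extend to a unitary $U\in\widetilde B$ — equivalently, in a stable rank one algebra compact projections cancel, so the two complementary projections arising here are Murray--von Neumann equivalent in $\widetilde B$ — and one arranges $Uy_\eta=uy_\eta$; then $\|Uy-uy\|\le\|U(y-y_\eta)\|+\|u(y_\eta-y)\|\le 2\sqrt\eta<\gamma$ for $\eta$ small.

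The main obstacle is exactly this last step, and two features make it delicate. First, $x$ and $y$ themselves need not lie in $B$ — only $xx^*$ and $yy^*$ do — so the stable-rank-one polar-decomposition argument cannot be applied to $y_\eta$ directly but must be transported into a hereditary subalgebra of $B$ along the intertwining $u$, using that $u$ carries the relevant hereditary subalgebras of $A$ into $B$. Second, one must treat the $v^*v$-part of the cut-down and its complement separately and carefully, so that the error incurred on the latter does not exceed the budget $\gamma$; this is where the precise comparison $\mathrm{supp}((x^*x-\epsilon)_+)\le\mathrm{supp}((y^*y-\eta)_+)$ (for a suitable $\eta$, valid because a cut-down support is compactly contained in the ambient support) is needed. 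Finally, the routine heredity facts used above — chiefly that a positive element of $A$ dominated by a scalar multiple of the support projection of a positive element of $B$ lies in the corresponding hereditary subalgebra of $B$ (Akemann's open-projection picture) — must be verified. Granting all of this, the conclusion follows from the triangle inequality as indicated.
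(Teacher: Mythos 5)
Your reduction to the estimate $\|x-uy\|\le\|h_x-h_y\|<\sqrt\delta-\gamma$ with $u=vw^*$ is correct and parallels the paper's computation, but the step you yourself flag as ``the heart of the matter''---producing a unitary $U\in\widetilde B$ with $\|Uy-uy\|<\gamma$---is where the proposal has a genuine gap. The mechanism you invoke, namely that after cutting down to $y_\eta$ with $y_\eta^*y_\eta=(y^*y-\eta)_+$ the partial isometry $u$ ``extends to a unitary because compact projections cancel in a stable rank one algebra,'' is not a proof. The supports of $(y^*y-\eta)_+$ and $(x^*x-\epsilon)_+$ are open projections in $A^{**}$, not projections in $B$ or $\widetilde B$, so the classical cancellation statement (that Murray--von Neumann equivalent projections in a stable rank one algebra have equivalent complements, whence the implementing partial isometry extends to a unitary) does not apply to them; and no cited result supplies an exact identity $Uy_\eta=uy_\eta$. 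In effect you are asserting, for the cut-down data, exactly the statement the lemma is meant to establish. Making this ``transport the partial isometry'' idea rigorous is possible, but the standard way to do it is precisely the argument you have not supplied.

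For comparison, the paper's proof avoids all discussion of projections and supports. It forms the single element $z=V|x|W^*$ (in your notation $z=u|z|$ with $|z|=Wh_xW^*$), checks that $z\in\overline{xA}\cap\overline{Ay^*}\subseteq B$ using $xx^*,yy^*\in B$, $x^*x\in\overline{y^*Ay}$, and heredity of $B$, and then uses stable rank one of $B$ in its most elementary form: approximate $z$ by an invertible $z'\in\widetilde B$ so that both $\|z-z'\|$ and $\||z|-|z'|\|$ are at most $\epsilon/2$, and let $U$ be the unitary in the polar decomposition of $z'$. This gives $\|z-U|z|\|\le\epsilon$, and combined with $\||x|-|y|\|<\sqrt\delta$ (your Lemma~\ref{ineq} step, which is the same as the paper's) and the choice $\epsilon=\sqrt\delta-\||x|-|y|\|$ one gets $\|x-Uy\|\le\|z-U|z|\|+\||x|-|y|\|\le\sqrt\delta$. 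If you want to complete your write-up, replace the cut-down/cancellation paragraph by this approximate-polar-decomposition argument applied to $z$.
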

\begin{proof}
Let $x=V|x|$ and $y=W|y|$ denote the polar decompositions of $x$ and $y$ in the bidual of $A$ (we use the notation $|x|$ for the element $(x^*x)^\frac{1}{2}$). Set $V|x|W^*=z$. It is clear that $z\in \overline{xA}$. Also, $z\in \overline{Ay^*}$ since by assumption $x^*x\in \overline{y^*Ay}$. Therefore, $z\in B$.

Let $\epsilon>0$ (to be specified later). Since $B$ has stable rank one there exists an invertible element $z'$ in the unitization of $B$ such that
\[
\|z-z'\|\le\frac{\epsilon}{2},\quad \||z|-|z'|\|\le\frac{\epsilon}{2}.
\]
Denote by $U\in \widetilde{B}$ the unitary in the polar decomposition of the invertible element $z'$. It follows that
\[
\|z-U|z|\|\le \|z-z'\|+\|U|z'|-U|z|\|\le\frac{\epsilon}{2}+\frac{\epsilon}{2}=\epsilon.
\]
Hence,
\begin{align}\label{zU}
\|z-U|z|\|<\epsilon.
\end{align}

We have the following estimation:
\begin{align*}
&\|x-Uy\|=\|V|x|-UW|y|\|\le \|V|x|-UW|x|\|+\|UW|x|-UW|y|\|\\
&\le \|V|x|W^*-UW|x|W^*\|+\||x|-|y|\|=\|z-U|z|\|+\||x|-|y|\|.
\end{align*}
From Equation \eqref{xy} we have $\||x|-|y|\|<\sqrt{\delta}$. Taking $\epsilon=\sqrt{\delta}-\||x|-|y|\|$ in Equation \eqref{zU}  and using the preceding estimation we conclude that
\[
\|x-Uy\|\le \|z-U|z|\|+\||x|-|y|\|=\sqrt{\delta}.
\]
\end{proof}

\begin{proposition}\label{prelimprop}
Let $A$ be C*-algebra. If $a,b\in A^+$ are such that $a\preccurlyeq b$, and the hereditary subalgebra $\overline{bAb}$ has stable rank one, then $a$ is Murray-von Neumann equivalent to an element of $\overline{bAb}$.
\end{proposition}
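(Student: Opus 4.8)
The plan is to use the standard structure theory of the Cuntz relation for positive elements together with Lemma~\ref{x_y}. Since $a\preccurlyeq b$, by a well-known characterization (Rørdam's lemma / Kirchberg--Rørdam) one can extract, for each $\epsilon>0$, an element $d\in A$ with $d^*d\in\overline{bAb}$ and $\|d^*d-(a-\epsilon)_+\|$ small, or more precisely one has a sequence of elements $x_n\in A$ with $x_nx_n^*=(a-1/n)_+$ and $x_n^*x_n\in\overline{bAb}$. The idea is then to compare consecutive terms of this sequence using Lemma~\ref{x_y}, obtain a Cauchy sequence of contractions after correcting by unitaries in $\widetilde{\overline{bAb}}$, and pass to the limit to produce a single element $x$ with $xx^*=a$ and $x^*x\in\overline{bAb}$, which is precisely the asserted Murray--von Neumann equivalence.

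In more detail, first I would record that $a\preccurlyeq b$ is equivalent to the existence, for every $n$, of $d_n\in A$ with $(a-1/n)_+ = d_n b d_n^*$; setting $y_n := b^{1/2}d_n^*$ gives $y_ny_n^* \in \overline{bAb}$ and, via the identity $(f^*f) $ and $(ff^*)$ being Cuntz equivalent with matching spectra, an element $x_n$ with $x_nx_n^* = (a-1/n)_+$, $x_n^*x_n$ Murray--von Neumann equivalent to $y_n^*y_n$, hence $x_n^*x_n\in\overline{\,(y_n^*y_n)A(y_n^*y_n)\,}\subseteq\overline{bAb}$. Next, using that $(a-1/n)_+\to a$ and the functional calculus estimate of Lemma~\ref{ineq}, the elements $x_nx_n^*$ form a Cauchy sequence, so by a further application of Lemma~\ref{ineq} and the $C^*$-identity the $x_n$ themselves can be arranged (passing to a subsequence and rescaling) so that $\|x_m^*x_m - x_n^*x_n\|$ is as small as we like for $m,n$ large. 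Here is where Lemma~\ref{x_y} enters: with $B=\overline{bAb}$ (which has stable rank one by hypothesis), and noting $x_m^*x_m\in\overline{bAb}=\overline{x_n^*\,A\,x_n}$ when the ranges match up hereditarily, we obtain unitaries $U_{m,n}\in\widetilde{B}$ with $\|x_m - U_{m,n}x_n\|$ small. Telescoping these corrections yields a genuinely Cauchy sequence $U_nx_n$ in $A$ whose limit $x$ satisfies $xx^* = \lim (a-1/n)_+ = a$ and $x^*x = \lim U_n x_n^*x_n U_n^*\in\overline{bAb}$, as desired.

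The main obstacle I anticipate is the bookkeeping needed to make Lemma~\ref{x_y} applicable at each stage: that lemma requires not only the norm estimate $\|x_m^*x_m - x_n^*x_n\|<\delta$ but also the hereditary containment condition $x_m^*x_m\in\overline{x_n^*Ax_n}$, and these two ranges need not nest automatically. The standard device to get around this is to replace each $x_n$ by $x_n g_n(x_n^*x_n)$ for a suitable continuous function $g_n$ supported slightly inside the spectrum, so that $x_{n+1}^*x_{n+1}$ lands inside the hereditary algebra generated by $x_n^*x_n$ while disturbing the norm estimates only by a controlled amount; this is exactly the kind of cut-down argument already present in the proof of Lemma~\ref{x_y} and in \cite{c-e-i}. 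Once the nesting is arranged, the unitaries from Lemma~\ref{x_y} assemble into a convergent product, and the limit argument is routine. A secondary point to be careful about is that $a$ itself (rather than $(a-1/n)_+$) appears as $xx^*$ in the limit; this follows because $x_nx_n^* = (a-1/n)_+ \to a$ in norm and $U_n$ is unitary, so $\|U_nx_n(U_nx_n)^* - a\| = \|(a-1/n)_+ - a\|\to 0$.
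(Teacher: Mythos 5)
Your overall strategy---Lemma 2.2 of \cite{k-r} to produce elements implementing the comparisons $(a-\epsilon)_+\preccurlyeq b$, Lemma \ref{x_y} to compare consecutive ones, and a telescoped product of unitaries yielding a Cauchy sequence---is exactly the paper's. But you have the two legs of the Murray--von Neumann equivalence the wrong way around, and with your orientation the argument does not go through. You take $x_nx_n^*=(a-1/n)_+$ and $x_n^*x_n\in\overline{bAb}$. Lemma \ref{x_y}, applied to $x_m$ and $x_n$, requires (a) that $x_mx_m^*$ and $x_nx_n^*$ lie in the stable rank one hereditary subalgebra $B$ in which the unitary is to be found, and (b) a norm estimate on $\|x_m^*x_m-x_n^*x_n\|$ together with the containment $x_m^*x_m\in\overline{x_n^*Ax_n}$. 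In your setup, (a) forces $B$ to contain the elements $(a-1/n)_+$, i.e.\ essentially $B\supseteq\overline{aAa}$, which is \emph{not} assumed to have stable rank one; and (b) asks two essentially unrelated elements of $\overline{bAb}$ to be close---your claim that Cauchyness of $(x_nx_n^*)_n$ lets you rearrange things so that $(x_n^*x_n)_n$ is Cauchy is false (take $x_n=cu_n$ with $c\ge 0$ fixed and $u_n$ arbitrary unitaries: then $x_nx_n^*=c^2$ is constant while $x_n^*x_n=u_n^*c^2u_n$ need not converge). Relatedly, left multiplication by a unitary fixes $x_n^*x_n$ and conjugates $x_nx_n^*$, so your closing identities $xx^*=\lim(a-1/n)_+$ and $x^*x=\lim U_nx_n^*x_nU_n^*$ are transposed. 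Your proposed repair (cutting down by $g_n(x_n^*x_n)$ to force nesting inside $\overline{bAb}$) attacks a side issue and would not fix (a) or (b).

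All of these difficulties disappear if you flip the orientation, which is what the paper does: choose $x_n$ with $(a-1/2^{2n})_+=x_n^*x_n$ and $x_nx_n^*\in\overline{bAb}$ (Lemma 2.2 of \cite{k-r} gives exactly this). Then the membership hypothesis of Lemma \ref{x_y} is precisely $x_nx_n^*,\,x_{n+1}x_{n+1}^*\in\overline{bAb}$, while the hereditary nesting $x_n^*x_n\in\overline{x_{n+1}^*Ax_{n+1}}$ and the estimate $\|x_n^*x_n-x_{n+1}^*x_{n+1}\|<1/2^{2n}$ are automatic, because both elements are functional-calculus cut-downs of the single element $a$ (using $((a-s)_+-t)_+=(a-s-t)_+$); no auxiliary functions $g_n$ are needed. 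The unitaries then live in the unitization of $\overline{bAb}$, the telescoped sequence converges to an $x$ with $x^*x=a$ and $xx^*\in\overline{bAb}$, and the symmetry of Murray--von Neumann equivalence gives the statement.
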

\begin{proof}
Since $a\preccurlyeq b$, it follows by Lemma 2.2 of \cite{k-r} that there are elements $x_n\in A$, $n=1,2,\cdots$, such that
\[
\left(a-\frac{1}{2^{2n}}\right)_+=x_n^*x_n, \quad x_nx_n^*\in \overline{bAb}.
\]
(Given a positive element $a$ and a real number $t$, we denote by $(a-t)_+$ the evaluation of $a$ in the function $c_t(x)=\max(x-t,0)$, for $x\ge 0$.)

For each $n\ge 1$, let us apply Lemma \ref{x_y} to the elements $x_n$ and $x_{n+1}$. Then there exists a unitary $U_n$ such that $\|x_n-U_nx_{n+1}\|<\frac{1}{2^n}$. For each $n\ge 1$ set $U_1U_2\cdots U_{n-1}x_n=x_n'$. We have
\[
(x_n')^*x_n'=\left(a-\frac{1}{2^{2n}}\right)_+, \quad x_n'(x_n')^*\in \overline{bAb},
\]
and $\|x_n'-x_{n+1}'\|<\frac{1}{2^n}$. It follows that the sequence $(x_n')_{n\in \N}$ is a Cauchy sequence and hence it has a limit. Let us denote by $x$ the limit of $(x'_n)_{n\in \N}$. Then
\[
x^*x=\lim_n(x_n')^*x_n'=\lim_n \left(a-\frac{1}{2^{2n}}\right)_+=a.
\]
Also, $xx^*=\lim_n x_n'(x_n')^*\in \overline{bAb}$.
\end{proof}

\subsection{Generators and relations}\label{g-r} Throughout this paper we will only consider trees that are realized as subsets of $\C$, with edges line segments of length $1$. Note that any tree is homeomorphic to one in this class and so we do not lose any generality with this assumption.

Let $(X,v)$ be a rooted tree, that is to say, a tree $X$ with a specified vertex, or root, $v$ and with edges oriented with the natural orientation, {\it away} from the root. Let us denote by $\E(X,v)$ the set of edges of $X$ with their respective orientations. Given two edges $e_1$ of $e_2$ of $(X,v)$, we will say that $e_2$ is {\it next} to $e_1$ if the terminal vertex of $e_1$ is the same as the initial vertex of $e_2$. We will say that $e_1$ is {\it beside} $e_2$ if the initial vertex of $e_1$ is the same as the initial vertex of $e_2$. The orientation of the edges of $X$ induces an order in the set of edges $\E(X,v)$. Given two edges $e$ and $e'$, let us write $e\le e'$ if there is a sequence of edges $e_1,e_2,\cdots,e_n$ with $e_1=e$ and $e_n=e'$ such that $e_{i+1}$ is next to $e_i$ for all $i$.

Let us consider the C*-algebra $\mathrm{C}_0(X\setminus v)$ of continuous functions on $X\subseteq \C$ that vanish at the point $v$. To each edge $e$ of $(X,v)$ we associate the positive element $g_{e}$ of $\mathrm{C}_0(X\setminus v)$ given by
\begin{align}\label{generators}
g_e(x)=
\left\{
\begin{array}{lll}
x, & x\in e=[0,1],\\
1, & x\in X_e,\\
0, & x\in X\setminus (X_e\cup e),
\end{array}
\right.
\end{align}
where $X_e$ denotes the subtree of $X$ consisting of the edges $e'$ of $(X,v)$ that are less than the edge $e$. In the equation \eqref{generators} we are identifying the edge $e$ with the interval $[0,1]$ in such a way that the initial and terminal points of $e$ are identified with the points $0$ and $1$, respectively.  Let us denote by $\mathrm{G}(X,v)$ the set of elements $g_e$ with ${e\in\E(X,v)}$. In the proposition below it is shown that $\mathrm{G}(X,v)$ generates the C*-algebra $\mathrm{C}_0(X\setminus v)$.

Given a rooted tree $(X,v)$, let us denote by $\mathrm{C}^*\langle X,v\rangle$ the universal C*-algebra  on generators $(h_e)_{e\in \E(X,v)}$---one generator $h_e$ for each edge $e$---subject to the relations
\begin{equation}
\begin{aligned}\label{relations}
&h_e\ge 0, \quad \|h_e\|\le 1,\\
& h_{e_1}h_{e_2}=0, \quad\text{ if } e_1 \text{ is beside } e_2,\\
& h_{e_1}h_{e_2}=h_{e_2}, \quad\text{ if } e_2 \text{ is next to } e_1.
\end{aligned}
\end{equation}
Sometimes, in order to avoid confusion, we will write $h^{(X,v)}_e$ instead of $h_e$ for the generators of the C*-algebra $\mathrm{C}^*\langle X,v\rangle$. The same notation will be used when referring to the elements $g_e$ of $\mathrm{C}_0(X\setminus v)$.

\begin{proposition}\label{CC}
Let $(X,v)$ be a rooted tree. Then the C*-algebra $\mathrm{C}^*\langle X,v\rangle$ is isomorphic to the C*-algebra $\mathrm{C}_0(X\setminus v)$, by means of
\begin{equation}\label{h_g}
h_e\mapsto g_e\in \mathrm{C}_0(X\setminus v).
\end{equation}
\end{proposition}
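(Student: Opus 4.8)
The plan is to establish the isomorphism by showing that the natural map is well-defined, surjective, and injective. First, one checks that the elements $g_e\in\mathrm{C}_0(X\setminus v)$ defined in \eqref{generators} satisfy the relations \eqref{relations}. This is a direct computation from the definition: $g_e\ge 0$ and $\|g_e\|\le 1$ are immediate; if $e_1$ is beside $e_2$ then the supports of $g_{e_1}$ and $g_{e_2}$ meet only at the common initial vertex, where both vanish, so $g_{e_1}g_{e_2}=0$; and if $e_2$ is next to $e_1$ then $X_{e_2}\cup e_2\subseteq X_{e_1}$, so $g_{e_1}$ is identically $1$ on the support of $g_{e_2}$, giving $g_{e_1}g_{e_2}=g_{e_2}$. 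By the universal property of $\mathrm{C}^*\langle X,v\rangle$, the assignment $h_e\mapsto g_e$ extends to a $\ast$-homomorphism $\Phi\colon\mathrm{C}^*\langle X,v\rangle\to\mathrm{C}_0(X\setminus v)$.

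Next I would prove surjectivity of $\Phi$, i.e.\ that the set $\mathrm{G}(X,v)$ generates $\mathrm{C}_0(X\setminus v)$ as a C*-algebra. Since $X$ is locally compact Hausdorff and $v\in X$, by a Stone--Weierstrass argument it suffices to check that the C*-subalgebra $D$ generated by the $g_e$ separates the points of $X\setminus v$ and, together with the point $v$ where all $g_e$ vanish, that no point other than $v$ is annihilated by all of $D$. For the second point: any $x\ne v$ lies in some edge $e$ at parameter $t>0$ (or is an interior vertex, hence the terminal point of some edge), and then $g_e(x)>0$. For separation: given distinct $x,y\in X\setminus v$, if they lie on different edges one can find an edge $e$ (for instance one containing $x$ but whose associated closed region $X_e\cup e$ misses $y$, using the tree structure) with $g_e(x)\ne g_e(y)$; if they lie on the same edge $e=[0,1]$ with parameters $s\ne t$, then $g_e(x)=s\ne t=g_e(y)$. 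Hence $D=\mathrm{C}_0(X\setminus v)$.

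The main obstacle is injectivity of $\Phi$, equivalently showing that $\mathrm{C}^*\langle X,v\rangle$ is not ``too big''—that the relations \eqref{relations} already force the algebra to be commutative and to be exactly $\mathrm{C}_0$ of the tree. The strategy I would use is induction on the number of edges of $(X,v)$. For the base case, a single edge: the relations say $0\le h\le 1$ with no further constraint, so $\mathrm{C}^*\langle X,v\rangle$ is the universal C*-algebra on one positive contraction, which is $\mathrm{C}_0((0,1])\cong\mathrm{C}_0(X\setminus v)$, and $\Phi$ is visibly an isomorphism. For the inductive step, pick a ``leaf edge'' $e_0$ (an edge whose terminal vertex has no outgoing edges), and let $(X',v)$ be the rooted subtree obtained by deleting $e_0$. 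One shows that $\mathrm{C}^*\langle X,v\rangle$ decomposes compatibly with the decomposition $X\setminus v = (X'\setminus v)\sqcup(\text{open edge } e_0)$, or more precisely that there is a short exact sequence or pullback description of $\mathrm{C}^*\langle X,v\rangle$ in terms of $\mathrm{C}^*\langle X',v\rangle$ and $\mathrm{C}_0$ of the extra edge, glued along the point where $e_0$ attaches. Concretely: let $e_1$ be the edge of $X'$ ending at the initial vertex of $e_0$ (if any); the relation $h_{e_1}h_{e_0}=h_{e_0}$ and the ``beside'' relations with the other edges issuing from that vertex express how $h_{e_0}$ sits relative to the subalgebra generated by the remaining generators. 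Using the inductive hypothesis to identify that subalgebra with $\mathrm{C}_0(X'\setminus v)$, one then checks directly, via functional calculus and the universal property, that adjoining $h_{e_0}$ with these relations produces exactly the algebra of functions on $X\setminus v$ that restrict correctly at the attaching vertex. Commutativity is obtained along the way: the ``next'' and ``beside'' relations, applied inductively, show every generator commutes with every other, so $\mathrm{C}^*\langle X,v\rangle$ is commutative and equals $\mathrm{C}_0$ of its spectrum, which the generating and separating properties identify with $X\setminus v$. Combining surjectivity with this structural identification yields that $\Phi$ is an isomorphism.
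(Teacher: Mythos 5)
Your overall architecture (check the relations, prove surjectivity, prove injectivity) is sound, and the first two steps are essentially what the paper does: it verifies that the $g_e$ represent the relations \eqref{relations} and that they generate $\mathrm{C}_0(X\setminus v)$ (the paper does the latter by describing explicitly the subalgebra generated by each single $g_{e'}$ rather than by Stone--Weierstrass, but that is a cosmetic difference). The problem is the injectivity step, which is where all the content of the proposition lives. Your inductive ``remove a leaf edge and glue'' argument is only asserted: the claim that adjoining $h_{e_0}$ subject to its relations ``produces exactly the algebra of functions on $X\setminus v$ that restrict correctly at the attaching vertex'' is precisely the statement to be proved, and no mechanism (the promised short exact sequence, pullback, or functional-calculus computation) is actually supplied. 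Moreover your closing sentence is circular: the generating and separating properties of the $g_e$ identify the \emph{image} of $\Phi$ with $\mathrm{C}_0(X\setminus v)$, i.e.\ they give surjectivity and show that $X\setminus v$ embeds into the spectrum of $\mathrm{C}^*\langle X,v\rangle$; they do not show that the spectrum has no \emph{other} points, which is exactly what injectivity requires.

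The missing idea, and the one the paper uses (packaged as Lemma 3.2.2 of Loring, which reduces the isomorphism to generation plus a character-lifting condition), is a direct classification of the characters of $\mathrm{C}^*\langle X,v\rangle$. Since the relations force all generators to commute, $\mathrm{C}^*\langle X,v\rangle$ is commutative, and it suffices to show that every nonzero character $\phi$ is of the form $h_e\mapsto g_e(x)$ for some $x\in X\setminus v$. This is a short combinatorial argument with the numbers $t_e=\phi(h_e)\in[0,1]$: the relation $t_{e_1}t_{e_2}=t_{e_2}$ for $e_2$ next to $e_1$ forces $t_{e_1}=1$ whenever $t_{e_2}\neq 0$, and the ``beside'' relation forces orthogonal branches to be mutually exclusive, so the support of $(t_e)$ is a single chain of edges descending from the root with value $1$ on every edge except possibly the last; such a tuple is evaluation at a point of $X\setminus v$. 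If you insist on your induction on the number of edges, this same character analysis is what you would end up carrying out at the attaching vertex, so the inductive scaffolding buys nothing; as written, the step that does the work is absent.
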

\begin{proof}
The set of elements $(g_e)_{e\in \E(X,v)}$, with $g_e$ defined in Equation \eqref{generators}, is a represention of the relations \eqref{relations} in the C*-algebra $\mathrm{C}_0(X\setminus v)$. It follows by Lemma 3.2.2 of \cite{Loring} that there exists an isomorphism from $\mathrm{C}^*\langle X,v\rangle$ to $\mathrm{C}_0(X\setminus v)$ such that \eqref{h_g} holds, if and only if: (1) the elements $g_e$, $e\in \E(X,v)$, generate the C*-algebra $\mathrm{C}_0(X\setminus v)$; and (2) for any $\ast$-homomorphism $\phi\colon \mathrm{C}^*\langle X,v\rangle\to \C$ there exists a $\ast$-homomorphism $\psi\colon \mathrm{C}_0(X\setminus v)\to \C$ such that $\phi(h_e)=\psi(g_e)$. (Here we are also using that the C*-algebra $\mathrm{C}^*\langle X,v\rangle$ is commutative, since by the relations \eqref{relations} the elements $h_e$, $e\in \E(X,v)$, commute with each other.)

Let $e'$ be an edge of $(X,v)$. The sub-C*-algebra of $\mathrm{C}_0(X\setminus v)$ generated by the element $g_{e'}$ consists of the continuous functions on $X$ that are constant on the set $\bigcup_{e>e'}e$, and zero on the set $X\setminus (\bigcup_{e\ge e'}e)$. These functions, when $e'$ varies through the set $\E(X,v)$, clearly generate the C*-algebra $\mathrm{C}_0(X\setminus v)$. This shows that Condition (1) is satisfied.

Let $\phi\colon \mathrm{C}^*\langle X,v\rangle\to \C$ be a $\ast$-homomorphism. Then the numbers $\phi(h_e)$, $e\in \E(X,v)$, satisfy the relations \eqref{relations}. It follows from these relations that the set of edges of $(X,v)$ such that $\phi(h_e)\neq 0$ consists of a sequence of edges $e_1, e_2,\cdots, e_k$, such that the initial vertex of $e_1$ is $v$, and $e_{i+1}$ is next to $e_i$ for all $i$. Moreover, we have $\phi(h_{e_k})>0$, and $\phi(h_{e_i})=1$ for $1\le i<k$. Let $x$ be the point in $e_k$ such that $g_{e_k}(x)=\phi(h_{e_k})$. Then the $\ast$-homomorphism $\psi\colon \mathrm{C}_0(X\setminus v)\to \C$ such that $\psi(f)=f(x)$ satisfies $\psi(g_e)=\phi(h_e)$ for all $e\in \E(X,v)$. This shows that Condition (2) is satisfied.
\end{proof}

\subsection{Continuous-trace C*-algebras} Let $A$ be a C*-algebra and let $\hat{A}$ denote the spectrum of $A$. A continuous-trace C*-algebra is a C*-algebra $A$ which is generated as a closed two-sided ideal by the elements $x\in A^+$ for which the function $\pi \to \mathrm{Tr}(\pi(x))$ is finite and continuous on $\hat{A}$.

In this paper we make use of the following fact about continuous-trace C*-algebras (see \cite{Dixmier}):
\begin{proposition}\label{c-t}
Let $A$ be a continuous-trace C*-algebra such that $\mathrm{H}^3(\hat{A}, \Z)=0$. Then $A$ is stably isomorphic to $\mathrm{C}_0(\hat{A})$.
\end{proposition}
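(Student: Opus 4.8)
The plan is to deduce this from the Dixmier--Douady classification of continuous-trace C*-algebras; I sketch the argument. Write $X=\hat A$, which is a locally compact Hausdorff space, and which we may take to be paracompact (indeed second countable when $A$ is separable, as it is in our applications).

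\emph{Step 1: a global bundle picture.} Since $A$ has continuous trace it satisfies Fell's condition, so each point of $X$ has a neighbourhood $U$ over which the restriction of $A$ is, after stabilising, isomorphic to $\mathrm{C}_0(U)\otimes\K$. Patching these local trivialisations shows that $A\otimes\K$ is isomorphic to the algebra $\Gamma_0(X,\mathcal A)$ of sections vanishing at infinity of a locally trivial bundle $\mathcal A$ over $X$ with fibre $\K$ and structure group $\mathrm{Aut}(\K)$.

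\emph{Step 2: identify the structure group and the classifying set.} Every automorphism of $\K$ is inner, implemented by a unitary of the underlying Hilbert space $\mathcal H$, unique up to a scalar in $\mathbb{T}$; hence $\mathrm{Aut}(\K)\cong\mathrm{PU}(\mathcal H):=\mathrm{U}(\mathcal H)/\mathbb{T}$ (with the point-norm topology on the left). So the bundles in question, and thus the possibilities for $A\otimes\K$, are classified by the \v Cech cohomology set $\check H^1(X;\underline{\mathrm{PU}(\mathcal H)})$, the class of $\mathcal A$ being the Dixmier--Douady invariant $\delta(A)$ and the trivial bundle --- which corresponds to $\mathrm{C}_0(X)\otimes\K$ --- being the distinguished element. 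Then, from the central extension $1\to\mathbb{T}\to\mathrm{U}(\mathcal H)\to\mathrm{PU}(\mathcal H)\to 1$ and the contractibility of $\mathrm{U}(\mathcal H)$ (Kuiper's theorem), the long exact sequence in \v Cech cohomology with coefficients in the associated sheaves of germs of continuous functions gives $\check H^1(X;\underline{\mathrm{PU}(\mathcal H)})\cong\check H^2(X;\underline{\mathbb{T}})$; and the exponential sheaf sequence $0\to\Z\to\underline{\R}\to\underline{\mathbb{T}}\to 0$, together with the softness of the sheaf $\underline{\R}$ of germs of continuous real functions on the paracompact space $X$, gives $\check H^2(X;\underline{\mathbb{T}})\cong\check H^3(X;\Z)$, which agrees with the ordinary cohomology $H^3(X,\Z)$ for the spaces under consideration.

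Putting these together, the hypothesis $H^3(X,\Z)=0$ forces $\delta(A)=0$, so $\mathcal A$ is the trivial bundle and $A\otimes\K\cong\mathrm{C}_0(X)\otimes\K$; that is, $A$ is stably isomorphic to $\mathrm{C}_0(X)$. The main obstacle in a self-contained account is Step 1 --- extracting the global $\K$-bundle structure on $A\otimes\K$ from the continuous-trace condition --- and, secondarily, the appeal to Kuiper's theorem; both are classical, so I would simply cite them (see \cite{Dixmier}). (In the application of this paper $X$ is one-dimensional, so $H^3(X,\Z)=0$ holds automatically and one could instead argue by the local-to-global patching of Step 1 alone, bypassing the computation of the obstruction group.)
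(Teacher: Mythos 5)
Your sketch is a correct outline of the classical Dixmier--Douady classification, and it is essentially the same route the paper takes: the paper offers no proof of this proposition at all, simply citing Dixmier's book for the fact, and your Steps 1--2 (Fell's condition giving a locally trivial $\K$-bundle after stabilization, then $\mathrm{Aut}(\K)\cong\mathrm{PU}(\mathcal H)$, Kuiper's theorem, and the exponential sheaf sequence identifying the obstruction with $\check H^3(X;\Z)$) reconstruct the standard argument behind that citation. The only loose point is the closing parenthetical: triviality of the bundle in the one-dimensional case is not obtained by ``patching alone'' but still requires an obstruction argument (e.g.\ that bundles with structure group $\mathrm{PU}(\mathcal H)$ over a space of covering dimension at most two are trivial), though this does not affect the main proof.
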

In particular, the proposition above can be applied to the case that the spectrum of $A$ is a finite disjoint union of trees and trees with a point removed.

\section{The pseudometrics $d_U^{(X,v)}$ and $d_W^{(X,v)}$}
Given a C*-algebra $A$ and a rooted tree $(X,v)$, let us denote by $\Hom(\mathrm{C}_0(X\setminus v), A)$ the set of $\ast$-homomorphisms from $\mathrm{C}_0(X\setminus v)$ to $A$, and by $\Mor(\Cu(\mathrm{C}_0(X\setminus v)),\Cu(A))$ the set of Cuntz semigroup morphisms from the Cuntz semigroup of $\mathrm{C}_0(X\setminus v)$ to the Cuntz semigroup of $A$.  In this section we will define pseudometrics $d_U^{(X,v)}$ and $d_W^{(X,v)}$ on these sets, and in Theorem \ref{pseu-rel} we will prove that these pseudometrics are equivalent.

Given $\phi,\psi\in \Hom(\mathrm{C}_0(X\setminus v),A)$ we define $d^{(X,v)}_U(\phi,\psi)$ by the formula
\begin{align*}
d_U^{(X,v)}(\phi,\psi):=\inf_{U\in \widetilde A}\sup_{g\in \mathrm{G}(X,v)}\|\phi(g)-U^*\psi(g)U\|,
\end{align*}
where $\widetilde A$ denotes the unitization of $A$, and $\mathrm{G}(X,v)$ denotes the set of generators of the C*-algebra $\mathrm{C}_0(X\setminus v)$ corresponding to the edges of $X$ as in \eqref{generators}.

In order to define the pseudometric $d_W^{(X,v)}$ let us consider first the special case $(X,v)=([0,1],0)$. The pseudometric $d^{([0,1],0)}_W$---or, for short, $d_W$---was defined in \cite{c-e} by Ciuperca and Elliott. Given Cuntz semigroup morphisms $\alpha,\beta\colon\Cu(\mathrm{C}_0(0,1])\to \Cu(A)$ the distance between $\alpha$ and $\beta$ is defined by
\begin{align}\label{defdWmor}
d_W(\alpha,\beta):=\inf\left\{r\in \R^+\left|
\begin{array}{c}
\alpha[(\id-t-r)_+]\le\beta[(\id-t)_+],\\
\beta[(\id-t-r)_+]\le\alpha[(\id-t)_+],
\end{array}
\hbox{ for all }t\in \R^+\right\},
\right.
\end{align}
where $\id$ denotes the identity function on $(0,1]$, and $(\id-t)_+$ denotes the positive part of the function $\id-t$, for $t\in \R$.

Now let us consider a general rooted connected tree $(X, v)$. For each generator $g\in \mathrm{G}(X,v)$ let $\chi_g\colon \mathrm{C}_0(0,1]\to \mathrm{C}_0(X\setminus v)$ denote the $\ast$-homomorphism such that $\chi_g(\id)=g$. Let us define $d_W^{(X,v)}$ on the set $\Mor(\Cu(\mathrm{C}_0(X\setminus v)),\Cu(A))$ by
\begin{align}\label{md_X}
d_W^{(X,v)}(\alpha,\beta):=\sup_{g\in \mathrm{G}(X,v)}d_W(\alpha\circ\Cu(\chi_g), \beta\circ \Cu(\chi_g)).
\end{align}
Since $d_W$ is a pseudometric (see \cite{c-e}), $d_W^{(X,v)}$ is also a pseudometric.
In the following proposition we show that when the C*-algebra $A$ has stable rank one $d_W^{(X,v)}$ is actually a metric.
\begin{proposition}\label{metrics}
Let $A$ be a C*-algebra and let $(X,v)$ be a rooted tree. The following statements hold:

(i) If the C*-algebra $A$ has stable rank one, then $d_W^{(X,v)}$ is a metric.

(ii) The space $\Hom(\mathrm{C}_0(X\setminus v), A)$ is complete with respect to the pseudometric $d_U^{(X,v)}$.
\end{proposition}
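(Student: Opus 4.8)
The plan is to prove the two assertions separately, with (ii) being the more substantial of the two, though in fact (ii) follows by a routine completeness argument once the right lemmas are in place.

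\emph{Proof of (i).} Since $d_W^{(X,v)}$ is already known to be a pseudometric (by the remark preceding the proposition), it suffices to show that $d_W^{(X,v)}(\alpha,\beta)=0$ implies $\alpha=\beta$ when $A$ has stable rank one. By the definition \eqref{md_X}, vanishing of $d_W^{(X,v)}(\alpha,\beta)$ forces $d_W(\alpha\circ\Cu(\chi_g),\beta\circ\Cu(\chi_g))=0$ for every $g\in\mathrm{G}(X,v)$. The point-separating property of $d_W$ on $\Mor(\Cu(\mathrm{C}_0(0,1]),\Cu(A))$ for $A$ of stable rank one was established in \cite{c-e} (it is essentially the statement that the Cuntz class of a positive element of a stable-rank-one algebra is determined by the values of $[(\id-t)_+]$, which in turn recover the element up to Cuntz equivalence); hence $\alpha\circ\Cu(\chi_g)=\beta\circ\Cu(\chi_g)$ for all $g$. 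It then remains to observe that the ranges of the maps $\Cu(\chi_g)$, as $g$ runs over $\mathrm{G}(X,v)$, generate $\Cu(\mathrm{C}_0(X\setminus v))$ in the sense that any element of the latter is a supremum of an increasing sequence built from sums of elements of the form $\Cu(\chi_g)(x)$; this is exactly the content of Proposition \ref{CC}, which identifies $\mathrm{C}_0(X\setminus v)$ with the universal C*-algebra $\mathrm{C}^*\langle X,v\rangle$ generated by the $g_e$. Since a morphism in the category $\CCu$ preserves suprema of increasing sequences and addition, a $\CCu$-morphism out of $\Cu(\mathrm{C}_0(X\setminus v))$ is determined by its restriction to those generating elements, and so $\alpha=\beta$.

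\emph{Proof of (ii).} Let $(\phi_n)_{n\in\N}$ be a Cauchy sequence in $\Hom(\mathrm{C}_0(X\setminus v),A)$ with respect to $d_U^{(X,v)}$. Passing to a subsequence, we may assume $d_U^{(X,v)}(\phi_n,\phi_{n+1})<2^{-n}$, so that for each $n$ there is a unitary $V_n\in\widetilde A$ with $\sup_{g\in\mathrm{G}(X,v)}\|\phi_n(g)-V_n^*\phi_{n+1}(g)V_n\|<2^{-n}$. Setting $U_n:=V_1V_2\cdots V_{n-1}$ and $\psi_n:=\Ad(U_n)\circ\phi_n$, one gets $\sup_{g}\|\psi_n(g)-\psi_{n+1}(g)\|<2^{-n}$, so that $(\psi_n(g))_n$ is Cauchy in $A$ for each generator $g$. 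Define $\psi(g):=\lim_n\psi_n(g)$. The key step is to check that $g\mapsto\psi(g)$ extends to a $\ast$-homomorphism $\psi\colon\mathrm{C}_0(X\setminus v)\to A$: by Proposition \ref{CC} this reduces to verifying that the limits $\psi(g_e)$ satisfy the relations \eqref{relations}, namely $\psi(g_e)\ge0$, $\|\psi(g_e)\|\le1$, $\psi(g_{e_1})\psi(g_{e_2})=0$ when $e_1$ is beside $e_2$, and $\psi(g_{e_1})\psi(g_{e_2})=\psi(g_{e_2})$ when $e_2$ is next to $e_1$; each of these passes to the limit because the corresponding identity holds for $\psi_n$ and multiplication and the order are norm-continuous. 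The universal property furnished by Proposition \ref{CC} then produces the desired $\ast$-homomorphism $\psi$, and $d_U^{(X,v)}(\psi_n,\psi)\to0$ since $\sup_g\|\psi_n(g)-\psi(g)\|\le\sum_{k\ge n}2^{-k}\to0$ and the infimum over unitaries in the definition of $d_U^{(X,v)}$ is bounded above by the value at $U=1$. Finally, since $\psi_n=\Ad(U_n)\circ\phi_n$, we have $d_U^{(X,v)}(\phi_n,\psi_n)=0$, whence $d_U^{(X,v)}(\phi_n,\psi)\to0$; as a Cauchy sequence with a convergent subsequence converges to the same limit, $(\phi_n)$ converges, and completeness follows.

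\emph{The main obstacle.} The only nonroutine point is the verification in (ii) that the pointwise limits of the $\psi_n$ on the generating set genuinely define a $\ast$-homomorphism on all of $\mathrm{C}_0(X\setminus v)$; this is where Proposition \ref{CC} does the essential work, reducing the existence of $\psi$ to the preservation of the finite list of relations \eqref{relations} under passage to the limit, which is then immediate from norm-continuity of the algebraic operations. Everything else—the subsequence trick, the telescoping of unitaries, the bound $d_U^{(X,v)}(\phi,\psi)\le\sup_g\|\phi(g)-\psi(g)\|$, and the deduction of full convergence from convergence of a subsequence—is standard.
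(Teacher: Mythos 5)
Your proof of (ii) is correct and essentially identical to the paper's: subsequence, telescoping unitaries, pointwise limits on the generators, and Proposition \ref{CC} to assemble the limit homomorphism. The problem is in (i), where your final step contains a genuine gap. You claim that every element of $\Cu(\mathrm{C}_0(X\setminus v))$ is a supremum of increasing sums of elements in the ranges of the maps $\Cu(\chi_g)$, $g\in \mathrm{G}(X,v)$, and you attribute this to Proposition \ref{CC}. But Proposition \ref{CC} says only that the $g_e$ generate $\mathrm{C}_0(X\setminus v)$ as a C*-algebra; generation at the level of the C*-algebra does not transfer to generation of the Cuntz semigroup under sums and suprema. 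Concretely, under the rank isomorphism $\Cu(\mathrm{C}_0(X\setminus v))\cong \mathrm{Lsc}(X\setminus v,\N\cup\{\infty\})$, the image of $\Cu(\chi_{g_e})$ consists of sums and suprema of the characteristic functions $\mathds{1}_{X_e^\epsilon}$ of the \emph{hereditary} open sets $X_e^\epsilon$, and every such combination is a function that is non-decreasing as one moves away from the root. Already for $X=[0,1]$, $v=0$, the function $\mathds{1}_{(1/4,1/2)}$ is lower semicontinuous but is not of this form, so agreement of $\alpha$ and $\beta$ on the ranges of the $\Cu(\chi_g)$ does not by itself force $\alpha=\beta$.

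This missing step is precisely where the bulk of the paper's proof of (i) lives. The paper first deduces, as you do, that $\alpha(\mathds{1}_U)=\beta(\mathds{1}_U)$ for every hereditary open $U$ (finite disjoint unions of sets $X_e^\epsilon$). Then, for an arbitrary connected open $U$, it writes $\mathds{1}_U=\sup_i \mathds{1}_{U_i}$ with $\mathds{1}_{U_i}\ll\mathds{1}_{U_{i+1}}$ and chooses auxiliary hereditary open sets $V_i$, disjoint from $U_i$, with $U\cup V_i$ hereditary; the chain of inequalities $\alpha(\mathds{1}_{U_i})+\alpha(\mathds{1}_{V_i})\ll \beta(\mathds{1}_U)+\alpha(\mathds{1}_{V_i})$ together with the cancellation theorem for compact containment (Theorem 4.3 of \cite{r-w}) then yields $\alpha(\mathds{1}_{U_i})\le\beta(\mathds{1}_U)$, and passing to the supremum and symmetrizing gives $\alpha(\mathds{1}_U)=\beta(\mathds{1}_U)$. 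Since every lower semicontinuous function is a supremum of increasing finite sums of characteristic functions of open connected sets, this completes the argument. Your proposal needs this (or some substitute) to be a proof.
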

\begin{proof}
(i) Let $(X,v)$ be a rooted tree, and let $\alpha, \beta\colon \Cu(\mathrm{C}_0(X\setminus v))\to \Cu(A)$ be Cuntz semigroup morphisms such that $d_W^{(X,v)}(\alpha,\beta)=0$. We need to show that $\alpha=\beta$.

By Theorem 1 of \cite{leonel}, $\Cu(\mathrm{C}_0(X\setminus v))$ is naturally isomorphic (by a Cuntz semigroup isomorphism!) to the semigroup of lower semicontinuous functions from $X\setminus v$ to $\N\cup \{\infty\}$ with the pointwise addition and order. The isomorphism is given by the rank map:
\begin{align}\label{rank}
 \Cu(\mathrm{C}_0(X\setminus v))\ni [a] \mapsto \ \{(X\setminus v)\ni x \mapsto\mathrm{rank}(a(x))\}.
\end{align}
Let us denote the set of lower semicontinuous functions from $X\setminus v$ to $\N\cup \{\infty\}$ by $\mathrm{Lsc}(X\setminus v, \N\cup \{\infty\})$. Let $f\in \mathrm{Lsc}(X\setminus v, \N\cup \{\infty\})$. Then $f=\sum_{i=0}^\infty \mathds{1}_{U_i}$, where the sets $U_i$ are defined as $U_i=\{x\mid f(x)>i\}$, for all $i$. Moreover, the sets $U_i$, $i=0,1, \cdots$, are open since $f$ is lower semicontinuous. For each open set $U_i$, $i=1,2,\cdots$, there exists a sequence $(U_{i,j})_{j=1}^{\infty}$ of pairwise disjoint open connected subsets of $X\setminus v$ such that $U_i=\bigcup_{j=1}^\infty U_{i,j}$. Since
\[
f=\sum_{i=0}^\infty \mathds{1}_{U_i}=\sum_{i=0}^\infty\sum_{j=0}^\infty \mathds{1}_{U_{i,j}},
\]
it follows that $f=\sup_{n}\sum_{1\le i,j\le n}\mathds{1}_{U_{i,j}}$.

Let us show that $\alpha(\mathds{1}_U)=\beta(\mathds{1}_U)$ for any open connected subset $U$ of $X\setminus v$. It will follow from this that $\alpha=\beta$, since the Cuntz semigroup morphisms $\alpha$ and $\beta$ are additive and preserve suprema of increasing sequences, and any function in $\mathrm{Lsc}(X\setminus v, \N\cup \{\infty\})$ is the supremum of an increasing sequence of finite sums of characteristic functions of open subsets of $X\setminus v$, as shown above. By assumption, $d_W^{(X,v)}(\alpha, \beta)=0$. Hence, from the definition of the pseudometric $d_W^{(X,v)}$ (see Equation \eqref{md_X})
\[
d_W(\alpha\circ\Cu(\chi_g), \beta\circ \Cu(\chi_g))=0,
\]
for all $g\in \mathrm{G}(X,v)$, where $\chi_g\colon \mathrm{C}_0(0,1]\to \mathrm{C}_0(X\setminus v)$ is the $\ast$-homomorphism defined by $\chi(\id)=g$.
Since the C*-algebra $A$ has stable rank one $d_W$ is a metric, as it was shown in \cite{c-e} (or in Proposition 2 of \cite{l-l}).
Therefore, $\alpha\circ\Cu(\chi_g)=\beta\circ \Cu(\chi_g)$ for all $g\in \mathrm{G}(X,v)$. In particular,
$\alpha[(g_e-\epsilon)_+]=\beta[(g_e-\epsilon)_+]$ for each $e\in \E(X,v)$, and each $\epsilon>0$.
Let us denote by $X_e^\epsilon$ the subset of $X\setminus v$ consisting of the edges of $(X,v)$ that are less than $e$,
and the points of $e$ that are at a distance strictly larger than $\epsilon$ from the initial vertex of $e$.
Then $\mathrm{rank}((g_e-\epsilon)_+)=\mathds{1}_{X_e^\epsilon}$. It follows that
$\alpha(\mathds{1}_{X_e^\epsilon})=\beta(\mathds{1}_{X_e^\epsilon})$ for each $e\in \mathrm{E}(X,v)$, and each $\epsilon>0$.
Note that each set $X_e^\epsilon$ is hereditary in the sense that, if $x\in X_e^\epsilon$
and $y<x$ (a point $y$ is less than a point $x$ if the non-overlapping path from $y$ to the root $v$ contains $x$), then $y\in X_e^\epsilon$.
Also note that every hereditary open subset of $X\setminus v$ is the union of a finite number of pairwise disjoint sets that have the form
$X_e^\epsilon$, for some $e\in \mathrm{E}(X,v)$ and some $\epsilon>0$. Therefore, $\alpha(\mathds{1}_U)=\beta(\mathds{1}_U)$ for every hereditary
open subset $U$ of $X\setminus v$.

Let $U$ be a connected open subset of $X\setminus v$. We can choose a sequence of open subsets $(U_i)_{i=1}^\infty$ such that
$U=\bigcup_{i=1}^\infty U_i$, and such that for each $i=1,2,\cdots$, there is a compact set $K_i$ such that $U_i\subset K_i\subset U_{i+1}$.
It follows that $\mathds{1}_U=\sup_i \mathds{1}_{U_i}$, and $\mathds{1}_{U_i}\ll \mathds{1}_{U_{i+1}}$ for all $i\ge 1$. In addition, for each
$i=1,2,\cdots$, we can chose a hereditary open subset $V_i\subseteq X\setminus v$ such that $U\cup V_i$ is hereditary, $V_i\cup K_i$ is compact,
and $V_i\cap U_i=\varnothing$. Hence, $V_i$ is such that
\begin{align*}
&\alpha(\mathds{1}_{V_i})=\beta(\mathds{1}_{V_i}),\quad \alpha(\mathds{1}_{U\cup V_i})=\beta(\mathds{1}_{U\cup V_i}),\\
&\mathds{1}_{U_i\cup V_i}\ll \mathds{1}_{U_{i+1}\cup V_i},\quad \mathds{1}_{U_i}+\mathds{1}_{V_i}=\mathds{1}_{U_i\cup V_i}.
\end{align*}
We have
\begin{align*}
\alpha(\mathds{1}_{U_i})+\alpha(\mathds{1}_{V_i})&=\alpha(\mathds{1}_{U_i\cup V_i})\\
&\ll \alpha(\mathds{1}_{U_{i+1}\cup V_i})\\
&\le \alpha(\mathds{1}_{U\cup V_i})=\beta(\mathds{1}_{U\cup V_i})\\
&\le \beta(\mathds{1}_{U})+\beta(\mathds{1}_{V_i})\\
&=\beta(\mathds{1}_{U})+\alpha(\mathds{1}_{V_i}).
\end{align*}
More briefly,
\[
\alpha(\mathds{1}_{U_i})+\alpha(\mathds{1}_{V_i})\ll \beta(\mathds{1}_{U})+\alpha(\mathds{1}_{V_i}).
\]
Hence by Theorem 4.3 of \cite{r-w},
\[
\alpha(\mathds{1}_{U_i})\le \beta(\mathds{1}_{U}).
\]
Taking the supremum over $i\ge 1$ and using that $\mathds{1}_U=\sup_i \mathds{1}_{U_i}$ we conclude that $\alpha(\mathds{1}_{U})\le \beta(\mathds{1}_{U})$.
By symmetry, $\beta(\mathds{1}_{U})\le \alpha(\mathds{1}_{U})$. Therefore, we have $\alpha(\mathds{1}_{U})=\beta(\mathds{1}_{U})$ for every open connected
subset of $X\setminus v$. This concludes the proof of
the statement (i).

(ii) Let $(\phi_n)_{n\in \N}$ be a Cauchy sequence with respect to the pseudometric $d^{(X,v)}_U$. Then there exists a subsequence $(\phi_{n_i})_{i\in \N}$ of $(\phi_n)_{n\in \N}$ such that
\begin{align}\label{phi_ni_n}
d_U^{(X,v)}(\phi_{n_i},\phi_{n_{i+1}})<\frac{1}{2^{i+1}}, \quad d_U^{(X,v)}(\phi_{n_i},\phi_n)<\frac{1}{2^{i+1}},
\end{align}
for each $i\in \N$, and each $n>n_i$. It follows from the definition of the metric $d_U^{(X,v)}$, and from the first inequality above, that there exist unitaries $U_i\in \tilde{A}$, $i=1,2,\cdots$, such that
\begin{align*}
\|\phi_{n_i}(g)-U_i^*\phi_{n_{i+1}}(g)U_i\|<\frac{1}{2^{i+1}},
\end{align*}
for all $g\in \mathrm{G}(X,v)$. For each $i\ge 1$, set $\Ad(U_{i-1}U_{i-2}\cdots U_1)\circ \phi_{n_i}=\phi'_{n_i}$. Then for each $g\in \mathrm{G}(X,v)$, the sequence $(\phi'_{n_i}(g))_{i\in \N}$ is Cauchy in the norm topology. Hence, it converges. For each $g\in \mathrm{G}(X,v)$, let us denote by $\hat{g}$ the element $\lim_i\phi_{n_i}(g)$. Then the set $\{\hat{g}\mid g\in \mathrm{G}(X,v) \}$ is a representation of the relations \eqref{relations} in the C*-algebra $A$. Therefore, by Proposition \ref{CC}, there exists a $\ast$-homomorphism $\phi\colon \mathrm{C}_0(X\setminus v)\to A$ such that $\phi(g)=\hat{g}$, for all $g\in \mathrm{G}(X,v)$. Using the triangle inequality and the second inequality in Equation \eqref{phi_ni_n} we have
\begin{align*}
d_U^{(X,v)}(\phi_n, \phi)&\le d_U^{(X,v)}(\phi_n, \phi_{n_i})+d_U^{(X,v)}(\phi_{n_i}, \phi'_{n_i})+d_U^{(X,v)}(\phi'_{n_i}, \phi)\\
 &<\frac{1}{2^{i+1}}+0+d_U^{(X,v)}(\phi'_{n_i}, \phi)=\frac{1}{2^{i+1}}+d_U^{(X,v)}(\phi'_{n_i}, \phi),
\end{align*}
for each $i\in \N$, and for each $n>n_i$. It follows that $d_U^{(X,v)}(\phi_n, \phi)\to 0$ when $n\to \infty$. Therefore, the pseudometric $d_U^{(X,v)}$ is complete.
\end{proof}

\begin{lemma} \label{lemma-metrics}
Let $A$ be a unital C*-algebra of stable rank one, and let $X$ be a tree. Consider two Cuntz semigroup morphisms $\alpha, \beta\colon \Cu(\mathrm{C}(X))\to \Cu(A)$  such that $\alpha([1_X])=\beta([1_X])$, where $1_X$ denotes the unit of $\mathrm{C}(X)$. Then
\begin{align}\label{identity1}
d_W(\alpha\circ\Cu(\chi_g), \beta\circ \Cu(\chi_g))=d_W(\alpha\circ\Cu(\chi_{1-g}), \beta\circ \Cu(\chi_{1-g})),
\end{align}
for any element $0\le g\le 1$ of $\mathrm{C}(X)$, where $\chi_g$ as above takes $\id \in \mathrm{C}_0(0,1]$ to $g$.
\end{lemma}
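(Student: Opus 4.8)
The plan is to rewrite everything in terms of the rank isomorphism $\Cu(\mathrm{C}(X))\cong\mathrm{Lsc}(X,\N\cup\{\infty\})$ of \cite{leonel} (used already in the proof of Proposition \ref{metrics}(i)). Under it the class $[(g-t)_+]$ corresponds to $\mathds{1}_{\{g>t\}}$, so by the definition \eqref{defdWmor} of $d_W$ the inequality $d_W(\alpha\circ\Cu(\chi_g),\beta\circ\Cu(\chi_g))\le r$ holds precisely when $\alpha(\mathds{1}_{\{g>t+r\}})\le\beta(\mathds{1}_{\{g>t\}})$ and $\beta(\mathds{1}_{\{g>t+r\}})\le\alpha(\mathds{1}_{\{g>t\}})$ for all $t\in\R^+$; and since $[(1-g-t)_+]$ corresponds to $\mathds{1}_{\{1-g>t\}}=\mathds{1}_{\{g<1-t\}}$, the substitution $\sigma=1-t-r$ (together with the hypothesis $\alpha([1_X])=\beta([1_X])$, which disposes of the range $\sigma>1-r$) shows that $d_W(\alpha\circ\Cu(\chi_{1-g}),\beta\circ\Cu(\chi_{1-g}))\le r$ holds precisely when $\alpha(\mathds{1}_{\{g<\sigma\}})\le\beta(\mathds{1}_{\{g<\sigma+r\}})$ and $\beta(\mathds{1}_{\{g<\sigma\}})\le\alpha(\mathds{1}_{\{g<\sigma+r\}})$ for all $\sigma\in\R$. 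Thus the lemma reduces to the implication: for every $r\ge0$, if $\alpha(\mathds{1}_{\{g>t+r\}})\le\beta(\mathds{1}_{\{g>t\}})$ and $\beta(\mathds{1}_{\{g>t+r\}})\le\alpha(\mathds{1}_{\{g>t\}})$ for all $t\in\R^+$, then $\alpha(\mathds{1}_{\{g<\sigma\}})\le\beta(\mathds{1}_{\{g<\sigma+r\}})$ for all $\sigma\in\R$ --- the other inequality of the conclusion following by the symmetry of the hypothesis under interchanging $\alpha$ and $\beta$, and the reverse inequality between the two $d_W$'s then following by applying the implication to $1-g$ in place of $g$. (One could alternatively first reduce to the case $X=[0,1]$ with $g$ the identity function, by factoring $\chi_g$ and $\chi_{1-g}$ through the unital $\ast$-homomorphism $\mathrm{C}[0,1]\to\mathrm{C}(X)$, $f\mapsto f\circ g$, and the flip automorphism $\tau$ of $\mathrm{C}[0,1]$ with $\tau(f)(x)=f(1-x)$; but the argument is the same.)

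The implication itself will be proved by complementation, using the common value $e:=\alpha([1_X])=\beta([1_X])$ as a fixed total mass. Fix $\sigma$; we may assume $0<\sigma$ and $\sigma+r\le1$, the other cases being trivial ($\mathds{1}_{\{g<\sigma\}}=0$ if $\sigma\le0$; while if $\sigma+r>1\ge\sup g$ then $\mathds{1}_{\{g<\sigma+r\}}=\mathds{1}_X$ and $\alpha(\mathds{1}_{\{g<\sigma\}})\le\alpha(\mathds{1}_X)=e=\beta(\mathds{1}_X)$). For $0<\epsilon<\sigma$ put $W:=\{g>\sigma-\epsilon\}$ and $W':=\{g>\sigma-\epsilon+r\}$, both open. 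I then want three facts. (I) Since $\{g<\sigma-\epsilon\}$ and $W$ are disjoint open sets whose union is $X$ minus the closed set $g^{-1}(\sigma-\epsilon)$, and since every open subset of the compact space $X$ has characteristic function compactly contained in $\mathds{1}_X$, one gets $\mathds{1}_{\{g<\sigma-\epsilon\}}+\mathds{1}_W\ll\mathds{1}_X$, hence, applying $\alpha$ (which preserves $\ll$), $\alpha(\mathds{1}_{\{g<\sigma-\epsilon\}})+\alpha(\mathds{1}_W)\ll e$. (II) Since $\sigma-\epsilon+r<\sigma+r$, the sets $\{g<\sigma+r\}$ and $W'$ cover $X$, so $\mathds{1}_{\{g<\sigma+r\}}+\mathds{1}_{W'}\ge\mathds{1}_X$ and therefore $\beta(\mathds{1}_{\{g<\sigma+r\}})+\beta(\mathds{1}_{W'})\ge e$. (III) The hypothesis applied with $t=\sigma-\epsilon\ (\ge0)$ gives $\beta(\mathds{1}_{W'})\le\alpha(\mathds{1}_W)$. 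Chaining (I)--(III),
\[
\alpha(\mathds{1}_{\{g<\sigma-\epsilon\}})+\alpha(\mathds{1}_W)\ \ll\ e\ \le\ \beta(\mathds{1}_{\{g<\sigma+r\}})+\beta(\mathds{1}_{W'})\ \le\ \beta(\mathds{1}_{\{g<\sigma+r\}})+\alpha(\mathds{1}_W),
\]
so that $\alpha(\mathds{1}_{\{g<\sigma-\epsilon\}})+\alpha(\mathds{1}_W)\ll\beta(\mathds{1}_{\{g<\sigma+r\}})+\alpha(\mathds{1}_W)$, and cancelling $\alpha(\mathds{1}_W)$ by Theorem 4.3 of \cite{r-w}, exactly as in the proof of Proposition \ref{metrics}(i), gives $\alpha(\mathds{1}_{\{g<\sigma-\epsilon\}})\le\beta(\mathds{1}_{\{g<\sigma+r\}})$. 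Since $\mathds{1}_{\{g<\sigma\}}=\sup_n\mathds{1}_{\{g<\sigma-1/n\}}$ and $\alpha$ preserves suprema of increasing sequences, letting $\epsilon=1/n\to0$ yields $\alpha(\mathds{1}_{\{g<\sigma\}})\le\beta(\mathds{1}_{\{g<\sigma+r\}})$, as required.

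The one genuine difficulty is the open/closed mismatch: $\mathds{1}_X$ cannot be written as a sum of the characteristic functions of two disjoint \emph{open} sets, so the complementations in (I) and (II) are only approximate --- a removed closed ``seam'' $g^{-1}(\sigma-\epsilon)$ in (I), an overlap $\{\sigma-\epsilon+r<g<\sigma+r\}$ in (II). This is what forces the whole computation to be carried out with the compact-containment relation $\ll$ in place, so that the cancellation result of \cite{r-w} becomes applicable, after which the sought inequality is recovered by passing to a supremum; it is the same device, with the auxiliary set $W$ here playing the role of $V_i$, as in the proof of Proposition \ref{metrics}(i).
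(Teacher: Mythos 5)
Your proof is correct and follows essentially the same route as the paper's: the same $\epsilon$-shifted complementation against the fixed total mass $\alpha([1_X])=\beta([1_X])$, the same chain of inequalities ending in cancellation of the common summand via Theorem 4.3 of \cite{r-w}, and the same passage to a supremum over $\epsilon$. The only cosmetic difference is that you work with the level sets of $g$ on $X$ directly, whereas the paper first reduces to $X=[0,1]$, $g=\id$ via the unital map $\widetilde\chi_g$ and phrases the statement as $d_W^{([0,1],0)}=d_W^{([0,1],1)}$ --- a reduction you yourself note parenthetically.
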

\begin{proof}
Let $0\le g\le 1$ in $\mathrm{C}(X)$ be given. Let us denote by $\widetilde{\chi}_g$ the $\ast$-homomorphism from $\mathrm{C}[0,1]$ to $\mathrm{C}(X)$ such that $\widetilde{\chi}_g(\id)=g$ and $\widetilde{\chi}_g(1_{[0,1]})=1_X$. Note that $\widetilde{\chi}_g|_{\mathrm{C}_0(0,1]}=\chi_g$ and that, in the obvious sense, $\widetilde{\chi}_g|_{C_0[0,1)}=\chi_{1-g}$. Using the definition of the pseudometrics $d_W=d_W^{([0,1],0)}$ and $d_W^{([0,1],1)}$ we have
\begin{align*}
d_W(\alpha\circ\Cu(\chi_g), \beta\circ \Cu(\chi_g))&=d_W(\alpha\circ\Cu(\widetilde\chi_g), \beta\circ \Cu(\widetilde\chi_g)),\\
d_W(\alpha\circ\Cu(\chi_{1-g}), \beta\circ \Cu(\chi_{1-g}))&=d_W^{([0,1],1)}(\alpha\circ\Cu(\widetilde\chi_g), \beta\circ \Cu(\widetilde\chi_g)).
\end{align*}
Therefore, in order to show that equality \eqref{identity1} holds it is enough to show that $d_W(\alpha,\beta)=d^{([0,1],1)}_W(\alpha,\beta)$ for all Cuntz semigroup morphisms $\alpha,\beta\colon \Cu(\mathrm{C}[0,1])\to \Cu(A)$ such that $\alpha([1_{[0,1]}])=\beta([1_{[0,1]}])$.

By Theorem 1 of \cite{leonel} the Cuntz semigroup of $\mathrm{C}[0,1]$ is isomorphic to the set of lower semicontinuous functions from $[0,1]$ to $\N\cup \{\infty\}$ with pointwise addition and order. Under this identification the element $[(\id-t)_+]\in\Cu(\mathrm{C}[0,1])$ corresponds to the characteristic function of the set $(t,1]$. We will denote this function by $\mathds{1}_{(t,1]}$. Let $r>0$. It follows from the definition of $d_W$ and $d_W^{([0,1],1)}$ that
\begin{align}
d_W(\alpha,\beta)<r \Leftrightarrow
\begin{array}{c}
\alpha(\mathds{1}_{(t+r,1]})\le\beta(\mathds{1}_{(t,1]}),\\
\beta(\mathds{1}_{(t+r,1]})\le\alpha(\mathds{1}_{(t,1]}),
\end{array}
\hbox{ for all }t\in \R^+,\label{eq0}\\
d_W^{([0,1],1)}(\alpha,\beta)<r \Leftrightarrow
\begin{array}{c}
\alpha(\mathds{1}_{[0, t)})\le\beta(\mathds{1}_{[0, t+r)}),\\
\beta(\mathds{1}_{[0, t)})\le\alpha(\mathds{1}_{[0, t+r)}),
\end{array}
\hbox{ for all }t\in \R^+.\label{eq1}
\end{align}
In order to show that $d_W(\alpha,\beta)=d_W^{([0,1],1)}(\alpha,\beta)$ it is enough to prove that $d_W(\alpha,\beta)<r$ implies $d_W^{([0,1],1)}(\alpha,\beta)<r$ for all $r\in \R^+$, and vice versa.

Let us suppose that $d_W(\alpha,\beta)<r$. Then for all $\epsilon>0$ we have (somewhat as in \cite{c-e})
\begin{align*}
\alpha(\mathds{1}_{[0,t-\epsilon)})+\alpha(\mathds{1}_{(t-\epsilon,1]})&\le\alpha(\mathds{1}_{[0,1]})\ll\alpha(\mathds{1}_{[0,1]})=\beta(\mathds{1}_{[0,1]})\\
&\le \beta(\mathds{1}_{[0,t+r)})+\beta(\mathds{1}_{(t+r-\epsilon,1]})\\
&\le \beta(\mathds{1}_{[0,t+r)})+\alpha(\mathds{1}_{(t-\epsilon,1]})
\end{align*}
(we are using relation \eqref{eq0} in order to obtain the last inequality above). By Theorem 1 of \cite{elliott-cancellation} (or by Theorem 4.3 of \cite{r-w}) we can cancel $\alpha(\mathds{1}_{(t-\epsilon,1]})$ from both sides of the preceding inequality. Thus, we obtain $\alpha(\mathds{1}_{[0,t-\epsilon)})\le \beta(\mathds{1}_{[0,t+r)})$. Since $\epsilon$ is arbitrary and $\alpha$
preserves suprema of increasing sequences,
\[
\alpha(\mathds{1}_{[0,t)})=\alpha(\sup_\epsilon\mathds{1}_{[0,t-\epsilon)})=\sup_\epsilon\alpha(\mathds{1}_{[0,t-\epsilon)})\le \beta(\mathds{1}_{[0,t+r)}).
\]
So, $\alpha(\mathds{1}_{[0,t)})\le \beta(\mathds{1}_{[0,t+r)})$. Interchanging $\alpha$ and $\beta$, as we may, we have $\beta(\mathds{1}_{[0,t)})\le \alpha(\mathds{1}_{[0,t+r)})$. Hence, $d_W^{([0,1],1)}(\alpha,\beta)<r$ by relation \eqref{eq1}. This shows that
\[
d_W^{([0,1],1)}(\alpha,\beta)\le d_W(\alpha,\beta).
\]
The opposite inequality follows by symmetry.
\end{proof}

\begin{theorem}\label{m-rel}
Let $A$ be a C*-algebra of stable rank one and let $(X,v)$ be a rooted tree. Let $\phi, \psi\colon \mathrm{C}_0(X\setminus v)\to A$ be $\ast$-homomorphisms. Then for all $\epsilon>0$ there exists a unitary $U$ in $\widetilde A$ such that
\begin{align}\label{result}
\|\phi(g)-U^*\psi(g)U\|< (2N+2)d_W(\Cu(\phi\circ \chi_g), \Cu(\psi\circ \chi_g))+\epsilon
\end{align}
for all $g\in \mathrm{G}(X,v)$, where $N$ denotes the number of edges of $X$.
\end{theorem}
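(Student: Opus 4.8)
The natural approach is induction on the number $N$ of edges of $X$. The base of the induction is the interval case treated in \cite{c-e}: when $N=1$ we have $\mathrm{C}_0(X\setminus v)=\mathrm{C}_0(0,1]$, the only generator $g$ is $\mathrm{id}$ and $\chi_g=\mathrm{id}$, so \eqref{result} is exactly the comparison $d_U(\phi,\psi)\le C\,d_W(\Cu(\phi),\Cu(\psi))$ valid over C*-algebras of stable rank one with an absolute constant $C\le 2N+2=4$ (this is the content, for $\mathrm{C}_0(0,1]$, of \cite{c-e}, and gives the base case with room to spare).

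For the inductive step, write $\delta_e:=d_W(\Cu(\phi\circ\chi_{g_e}),\Cu(\psi\circ\chi_{g_e}))$ and peel off a leaf edge $e^{\ast}$ of $X$. By Proposition \ref{CC} the $C^{\ast}$-subalgebra of $\mathrm{C}_0(X\setminus v)$ generated by $\{g_e:e\neq e^{\ast}\}$ is a copy of $\mathrm{C}_0(X'\setminus v)$, where $X'$ is the tree with $e^{\ast}$ removed; applying the inductive hypothesis to the restrictions of $\phi$ and $\psi$ and absorbing the resulting unitary into $\psi$ (which leaves every $\delta_e$ unchanged) we may assume $\|\phi(g_e)-\psi(g_e)\|<2N\delta_e+\tfrac{\epsilon}{2}$ for all $e\neq e^{\ast}$, using $2(N-1)+2=2N$. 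It then remains to correct $g_{e^{\ast}}$. Here one exploits the relations \eqref{relations}: $g_{e^{\ast}}$ is orthogonal to every generator that is not an ancestor of $e^{\ast}$, and is dominated by its parent generator $g_{e_0}$ (indeed $g_{e_0}g_{e^{\ast}}=g_{e^{\ast}}$), so that $\phi(g_{e^{\ast}})$ and $\psi(g_{e^{\ast}})$ lie in the hereditary subalgebras cut out by $\phi(g_{e_0})$ and $\psi(g_{e_0})$ --- now close and of stable rank one. Invoking the base case inside such a corner, together with Proposition \ref{prelimprop} and Lemma \ref{x_y}, produces a unitary carrying $\psi(g_{e^{\ast}})$ to within $2\delta_{e^{\ast}}+\tfrac{\epsilon}{2}$ of $\phi(g_{e^{\ast}})$, and --- being supported where $g_{e_0}$, hence every ancestor generator, is essentially $1$ --- it disturbs the previously matched generators by at most $\tfrac{\epsilon}{2}$. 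Combining gives \eqref{result} with constant $2N+2$. (If $e^{\ast}$ issues directly from $v$, or more generally when the parent of $e^{\ast}$ has several leaf children, $\mathrm{C}_0(X\setminus v)$ splits off orthogonal summands; one then amalgamates the unitaries obtained from the base case and the inductive hypothesis on the pieces, using orthogonality of the images and stable rank one in the manner of Proposition \ref{prelimprop}.)

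The step I expect to be the main obstacle is the final correction, and the amalgamation of orthogonal pieces: making rigorous the heuristic that a unitary adjusting $g_{e^{\ast}}$ is ``supported where $g_{e_0}=1$'' and therefore commutes with the images of the ancestor generators (and, in the amalgamation, that adjusting one orthogonal piece does not move the others). In the commutative model this is automatic, but $\phi(g_{e_0})$ is merely a positive contraction rather than a projection, so one must cut down by $(g_{e_0}-\eta)_+$ for small $\eta$, use that $g_{e_0}$ acts as a unit on the ideal generated by the downstream generators and nearly so on its $\psi$-image after the matching, and control the resulting commutator errors via stable rank one along the lines of Proposition \ref{prelimprop} and Lemma \ref{x_y}. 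This is also the place where one must maintain the per-generator form of the bound \eqref{result}, rather than a single estimate in terms of $\max_e\delta_e$, and where the explicit constant $2N+2$ emerges.
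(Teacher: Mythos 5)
Your overall strategy (induction on $N$, base case the interval) matches the paper's, but your decomposition is inverted relative to the one the paper actually uses, and the step you yourself flag as the main obstacle is a genuine gap rather than a technicality. The problem is the assertion that the unitary correcting $g_{e^{\ast}}$ ``disturbs the previously matched generators by at most $\tfrac{\epsilon}{2}$'' because it is ``supported where every ancestor generator is essentially $1$''. That unitary cannot be taken in the unitization of $\overline{\phi(g_{e^{\ast}})A\phi(g_{e^{\ast}})}$ alone (where $\phi(g_{e_0})$ really does act as an exact unit): it must carry $\psi'(g_{e^{\ast}})$, which lives in the hereditary subalgebra determined by $\psi'(g_{e_0})$, over to a neighbourhood of $\phi(g_{e^{\ast}})$, which lives in the hereditary subalgebra determined by $\phi(g_{e_0})$. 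On the $\psi'$-side corner, $\phi(g_{e_0})$ acts as a unit only up to $\|\phi(g_{e_0})-\psi'(g_{e_0})\|$, which after your first stage is bounded only by $2N\delta_{e_0}+\tfrac{\epsilon}{2}$ --- and $\delta_{e_0}$ can be close to $1$ even when $\delta_{e^{\ast}}$ is tiny. Consequently the commutator of the correcting unitary with $\psi'(g_{e_0})$, and with every other ancestor generator, is controlled by the errors already accumulated for the \emph{ancestors}, not by $\epsilon$ (and converting ``$\psi'(g_{e_0})s\approx s$'' for $s=\phi(g_{e^{\ast}})+\psi'(g_{e^{\ast}})$ into the corresponding statement for general elements of $\overline{sAs}$ costs a square root, cf.\ Lemma \ref{ineq}). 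Each leaf correction therefore multiplies the ancestors' errors by a fixed factor rather than adding $O(\epsilon)$, so unrolling your induction yields at best a constant exponential in $N$ (and possibly a bound that is not even linear in $d_W$), not $2N+2$; the per-generator form of \eqref{result} is exactly what breaks.

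The paper's proof decomposes at the \emph{root} rather than at a leaf, precisely so that no such commutator estimates are ever needed. After reducing, via Lemma \ref{lemma-metrics}, to the case where $v$ has degree $k\ge 2$ (otherwise cutting would not decrease the edge count), and after disposing of root edges with $d_W$-distance equal to $1$ by a quotient-tree argument, one cuts each root edge $e_i$ at level $r_i>\delta_{e_i}$; Proposition \ref{prelimprop} produces pairwise orthogonal elements $x_i$ whose common polar partial isometry $V$ intertwines the restriction of $\phi$ to each corner $\mathrm{C}_0(X_i\setminus w_i)$ with a homomorphism into $\overline{\psi(g_{e_i})\widetilde{A}\psi(g_{e_i})}$. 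Every generator except the $g_{e_i}$ themselves lies inside these corners, so the inductive hypothesis (with constant $2N$, since each $X_i$ has at most $N-1$ edges) handles all of them simultaneously; the corner unitaries glue because they are orthogonal, $V$ is traded for a genuine unitary using stable rank one, and only the $k$ root generators require the extra triangle-inequality step $\|g_{e_i}-(g_{e_i}-r_i)_+\|=r_i$ on each side, which is exactly where the $+2$ in $2N+2$ comes from. If you want to salvage a bottom-up argument you would have to feed the ancestors' errors back multiplicatively, which changes the statement being proved; redoing the inductive step from the root is the cleaner fix.
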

\begin{proof}
We will use mathematical induction on the number of edges $N$ of the tree $X$. When $N=1$ we can identify the rooted tree $(X,v)$ with $([0,1],0)$. In this case the inequality \eqref{result} was shown in the proof of Theorem 4.1 of \cite{c-e} (for an explicit proof see Theorem 1 of \cite{l-l}).

Now let $N>1$ and let us assume that the theorem holds for all rooted trees with number of edges strictly less than $N$, and in each case for an arbitrary C*-algebra of stable rank one in place of $A$. Let us show that the theorem holds for all rooted trees with $N$ edges. Let $(X,v)$ be a rooted tree with $N$ edges and let $\phi,\psi\colon \mathrm{C}_0(X\setminus v)\to A$ be $\ast$-homomorphisms.  Let us denote by $\widetilde\phi, \widetilde\psi\colon \mathrm{C}(X)\to \widetilde{A}$ the unitizations of $\phi$ and $\psi$. Then $\phi\circ\chi_g=\widetilde\phi\circ\chi_g$ and $\psi\circ\chi_g=\widetilde\psi\circ\chi_g$. It follows from these identities that the inequality \eqref{result} holds for the $\ast$-homomorphisms $\phi$ and $\psi$ if and only if it holds for the $\ast$-homomorphisms $\widetilde\phi$ and $\widetilde\psi$. Therefore, we may assume that the $\ast$-homomorphisms $\phi$ and $\psi$ have domain $\mathrm{C}(X)$, codomain $\widetilde{A}$, and that they are unital.

Let us show that there is no loss of generality to assume that the number of edges of $X$ having $v$ as a vertex is strictly larger than one. Assume that $v$ is a vertex of only one edge of $X$, say $e$. Denote by $v'$ the other vertex of the edge $e$. Since the number of edges of the tree $X$ is strictly larger than one, $v'$ is a vertex of more than one edge of $X$.  From the definition of the set of generators $\mathrm{G}(X,v')$ we see that it is obtained from the set $\mathrm{G}(X,v)$ by replacing the generator $g_e$ associated to the edge $e$ by the function $1-g_e\in \mathrm{C}_0(X\setminus v')$. That is to say,
\begin{align}\label{ch-vertices}
\mathrm{G}(X,v')=(\mathrm{G}(X,v)\setminus\{g_e\})\cup\{1-g_e\}.
\end{align}
Now if we apply Lemma \ref{lemma-metrics} to the C*-algebra $\widetilde{A}$ and to the Cuntz semigroup
morphisms $\Cu(\phi)$ and $\Cu(\psi)$ we see that the inequality \eqref{result} remains unchanged if a generator $g$ is replaced by $1-g$.
This observation together with the equation \eqref{ch-vertices} implies that the $\ast$-homomorphisms $\phi$
and $\psi$ satisfy the inequality \eqref{result} for all $g$ in $\mathrm{G}(X,v)$ if and only if they satisfy
this condition with $v'$ in place of $v$. Thus, we may assume that $v$ is a vertex of more than one edge of $X$.
Let us denote by $e_1,e_2,\cdots,e_k$ the edges of $(X,v)$ with $v$ as a vertex, and by $v_1,v_2,\cdots,v_k$ their second vertices, respectively. Denote
by $g_{e_1},g_{e_2},\cdots,g_{e_k}\in \mathrm{G}(X,v)$ the generators of $\mathrm{C}_0(X\setminus v)$
associated to $e_1,e_2,\cdots,e_k$. Suppose that the number of indices $i$ such that
\begin{align*}
d_W(\Cu(\phi\circ \chi_{g_{e_i}}), \Cu(\psi\circ \chi_{g_{e_i}}))= 1,
\end{align*}
is strictly larger than one (note that by the definition of the metric $d_W$ we have $d_W(\alpha,\beta)\le 1$ for all
Cuntz semigroup morphisms $\alpha$ and $\beta$). Changing the numbering of the edges $e_1,e_2,\cdots,e_k$ if necessary, we may assume that
\begin{align}
&d_W(\Cu(\phi\circ \chi_{g_{e_i}}), \Cu(\psi\circ \chi_{g_{e_i}}))=1,\text{ for }1\le i\le k',\label{equalone}\\
&d_W(\Cu(\phi\circ \chi_{g_{e_i}}), \Cu(\psi\circ \chi_{g_{e_i}}))<1,\text{ for }k'< i\le k.\nonumber
\end{align}
Denote by $Y$ the subgraph of $X$ consisting of the edges $e$ of $(X,v)$ such that either $e<e_i$ for some $1\le i\le k$, or $e=e_i$ for some $k'<i\le k$. Let us define an equivalence relation $\sim$ on $Y$
by taking $v_i\sim v_j$ for every $1\le i,j\le k'$, $v_i\sim v$ for every $1\le i\le k'$, and $x\sim x$ for every $x\in Y$. Then the set $(Y', [v_1])$, where $Y'=Y/\negthickspace\sim$ and $[v_1]$ denotes the equivalence class of $v_1$, has the structure of a rooted tree. The edges and the vertices of $(Y', [v_1])$ are defined to be the images by the quotient map of the edges and the vertices of $Y$. In addition, the C*-algebra $\mathrm{C}_0(Y'\setminus [v_1])$ is isomorphic to the subalgebra of $\mathrm{C}_0(X\setminus v)$ generated by the elements $g_e\in \mathrm{G}(X,v)$ such that $e$ is an edge of $Y$. Let $\phi',\psi'\colon \mathrm{C}_0(Y'\setminus [v_1])\to \widetilde A$ denote the $\ast$-homomorphisms induced by the restrictions of the $\ast$-homomorphisms $\phi$ and $\psi$ to the subalgebra of $\mathrm{C}_0(X\setminus v)$ generated by the elements $g_e\in \mathrm{G}(X,v)$, with $e$ an edge of $Y$. By the inductive hypothesis and using that the number of edges of $(Y', [v_1])$ is strictly less than $N$ we have that given $\epsilon>0$ there exists a unitary $U\in \widetilde{A}$ such that
\[
\|\phi'(g)-U^*\psi'(g)U\|< (2N+2)d_W(\Cu(\phi'\circ \chi_g), \Cu(\psi'\circ \chi_g))+\epsilon,
\]
for all $g\in \mathrm{G}(Y', [v_1])$. It follows that
\begin{align}\label{ineq_e}
\|\phi(g_e)-U^*\psi(g_e)U\|< (2N+2)d_W(\Cu(\phi\circ \chi_{g_e}), \Cu(\psi\circ \chi_{g_e}))+\epsilon,
\end{align}
for every edge $e$ of $Y$. By Equation \eqref{equalone} the inequality above also holds for the edges $e_1,e_2,\cdots,e_{k'}$ since the left side of the inequality \eqref{ineq_e} is less than or equal to two, and $N>1$. Using that the edges of $(X,v)$ consist of the edges of $Y$ and the edges $e_1,e_2,\cdots,e_{k'}$ we conclude that the inequality above holds for all $e\in \mathrm{E}(X,v)$. It follows that the statement of the theorem holds for the rooted tree $(X,v)$. Therefore, we may assume that
\begin{align*}
d_W(\Cu(\phi\circ \chi_{g_{e_i}}), \Cu(\psi\circ \chi_{g_{e_i}}))<r_i<1,
\end{align*}
for $i=1,2,\cdots,k$ and some positive numbers $r_i$.

By the definition of the pseudometric $d_W$, the preceding inequality implies that
\begin{align*}
\phi((g_{e_i}-r_i)_+)=(\phi(g_{e_i})-r_i)_+\preccurlyeq \psi(g_{e_i}),
\end{align*}
for each $i=1,2,\cdots,k$. Applying (ii) of Proposition \ref{prelimprop} to the elements $\phi((g_{e_i}-r_i)_+)$ and $\psi(g_{e_i})$, we obtain elements $x_i\in \widetilde A$, $i=1,2,\cdots,k$, such that
\begin{align}\label{xi}
\phi((g_{e_i}-r_i)_+)=x_i^*x_i, \quad x_ix_i^*\in \overline{\psi(g_{e_i})\widetilde A\psi(g_{e_i})}.
\end{align}
Note that the elements $x_i$ satisfy the orthogonality relations $x_ix_j^*=x_i^*x_j=0$ for $i\neq j$ (this holds because the elements $g_{e_i}$ are pairwise orthogonal). Set $\sum_{i=1}^kx_i=x$, and consider the polar decomposition $x=V|x|$ of $x$ in the bidual of $\widetilde A$. From the orthogonality relations satisfied by the elements $x_i$ it follows that $x_i=V|x_i|$. This last identity implies that the restriction of the map $\Ad(V^*)\circ \phi$ to the C*-algebra $\overline{(g_{e_i}-r_i)_+\mathrm{C}_0(X\setminus v)}$ is a $\ast$-homomorphism with image contained in the hereditary subalgebra $\overline{\psi(g_{e_i})\widetilde A\psi(g_{e_i})}$ of $\widetilde A$, for each $i=1,2,\cdots,k$.

For each $i\in \{1,2,\cdots,k\}$ let us denote by $X_i$ the closure (in $\C$) of the spectrum of the algebra $\overline{(g_{e_i}-r_i)_+\mathrm{C}_0(X\setminus v)}$, and by $w_i\in X_i$ the point on the edge $e_i$ that is at distance $r_i$ from $v$.  The set $X_i\subset X$ can be given the structure of a tree by defining its vertices to be the vertices of $X$ that belong to $X_i$, together with the point $w_i$. The edges of $X_i$ will simply be the edges of $X$ that are subsets of $X_i$, together with the part of the edge $e_i$ that belongs to $X_i$. (We will refrain from insisting here that an edge has length one.) It follows from the fact that $k$ is at least two that the number of edges of $X_i$ is less than or equal to $N-1$. Note that the C*-algebra $\overline{(g_{e_i}-r_i)_+\mathrm{C}_0(X\setminus v)}$ is just $\mathrm{C}_0(X_i\setminus w_i)$.

The restrictions of the maps $\Ad(V^*)\circ\phi$ and $\psi$ to the C*-algebra $\mathrm{C}_0(X_i\setminus w_i)$ are $\ast$-homomorphisms with images contained in the hereditary subalgebra $\overline{\psi(g_{e_i})\widetilde A\psi(g_{e_i})}$. Therefore, by the inductive hypothesis, for each fixed $\epsilon>0$ there exists a unitary $U_i$, in the C*-algebra generated by the hereditary subalgebra $\overline{\psi(g_{e_i})\widetilde A\psi(g_{e_i})}$ and the unit of $\widetilde A$, such that
\begin{align*}
\| V\phi(g)V^*-U_i^*\psi(g)U_i\|&< 2N d_W(\Cu(\Ad(V^*)\circ\phi\circ \chi_g), \Cu(\psi\circ \chi_g))+\epsilon\\
&=2N d_W(\Cu(\phi\circ \chi_g), \Cu(\psi\circ \chi_g))+\epsilon,
\end{align*}
for all $g\in \mathrm{G}(X_i, w_i)$. Replacing $U_i$ by a scalar multiple if necessary, we may assume that $U_i-1\in \overline{\psi(g_{e_i})\widetilde A\psi(g_{e_i})}$. Set $1+\sum_{i=1}^k(U_i-1)=U$. Then $U$ is a unitary element of $\widetilde A$ since the elements $U_i-1$ are pairwise orthogonal. Furthermore, for each $i=1,2,\cdots,k$, we have $U_i^*\psi(g)U_i=U^*\psi(g)U$ for all $g\in \mathrm{G}(X_i, w_i)$. Thus,
\begin{align}\label{VU}
\| V\phi(g)V^*-U^*\psi(g)U\|< 2N d_W(\Cu(\phi\circ \chi_g), \Cu(\psi\circ \chi_g))+\epsilon,
\end{align}
for all $g\in \bigcup_{i=1}^k \mathrm{G}(X_i, w_i)$.

Recall that $V$ is the partial isometry in the polar decomposition of the element $x=\sum_{i=1}^kx_i$, where the elements $x_i$ are given in the equation \eqref{xi}. It follows by (i) of Proposition \ref{prelimprop} that for any $\delta>0$ there exists a unitary $W\in \widetilde A$ such that
\[
\|V|x|-W|x|\|<\delta.
\]
Hence for any $\delta>0$ and any finite subset $F$ of the hereditary subalgebra $\overline{|x|\widetilde A|x|}$ there exists a unitary $W\in \widetilde A$ such that
\[
\|VyV^*-WyW^*\|<\delta,
\]
for all $y\in F$. In particular, if we take $F=\bigcup_{i=1}^k \mathrm{G}(X_i, w_i)$, and $\delta$ small enough, we find that the inequality  \eqref{VU} still holds if the partial isometry $V$ is replaced by a suitable unitary $W$. Set $UW=U'$. Then
\begin{align}\label{U'}
\| \phi(g)-(U')^*\psi(g)U'\|< 2Nd_W(\Cu(\phi\circ \chi_g), \Cu(\psi\circ \chi_g))+\epsilon,
\end{align}
for all $g\in \bigcup_{i=1}^k \mathrm{G}(X_i, w_i)$. Let us show that the unitary $U'$ satisfies the conditions of the theorem. According to the tree structure given to the set $X_i$ for each $i=1, 2,\cdots, k$, we have that all the elements of $\mathrm{G}(X_i, w_i)$ belong to $\mathrm{G}(X,v)$ except for $f_i=\frac{1}{1-r_i}(g_{e_i}-r_i)_+$ (this function, restricted to $X_i$, is the generator of $\mathrm{C}(X_i, w_i)$ that corresponds to the (short) edge $X_i\cap e_i$; in other words, it is the generator $g_{X_i\cap e_i}$). In fact,
\begin{align}\label{G}
\mathrm{G}(X,v)=\bigcup_{i=1}^k(\mathrm{G}(X_i, v_i)\cup \{g_{e_i}\})\backslash \bigcup_{i=1}^{k}\{f_i\}.
\end{align}
So, in order to show that the inequality \eqref{result} holds for the $\ast$-homomorphisms $\phi$ and $\psi$, and the unitary $U'$, it is enough to check that it holds for the elements $g_{e_i}$. (For the rest of the elements of $\mathrm{G}(X,v)$ it holds by \eqref{U'} and \eqref{G}.) By the definition of the metric $d_W$,
\begin{align*}
d_W(\Cu(\phi\circ \chi_{f_i}), \Cu(\psi\circ \chi_{f_i}))&=\frac{1}{1-r_i}d_W(\Cu(\phi\circ \chi_{(g_{e_i}-r_i)_+}), \Cu(\psi\circ \chi_{(g_{e_i}-r_i)_+}))\\
&\le\frac{1}{1-r_i}d_W(\Cu(\phi\circ \chi_{g_{e_i}}), \Cu(\psi\circ \chi_{g_{e_i}})).
\end{align*}
Since $d_W(\Cu(\phi\circ \chi_{g_{e_i}}), \Cu(\psi\circ \chi_{g_{e_i}}))<r_i$ (by hypothesis), it follows that
\[
d_W(\Cu(\phi\circ \chi_{f_i}), \Cu(\psi\circ \chi_{f_i}))<\frac{r_i}{1-r_i}.
\]
By the inequality \eqref{U'} with $g=f_i$, it follows that
\begin{align*}
\|\phi(f_i)-(U')^*\psi(f_i)U'\|<\frac{2N r_i}{1-r_i}+\epsilon,
\end{align*}
for $i=1,2,\cdots,k$. Hence,
\begin{align}\label{2N}
\| \phi((g_{e_i}-r_i)_+)-(U')^*\psi((g_{e_i}-r_i)_+)U'\|<2N r_i+(1-r_i)\epsilon\le 2Nr_i+\epsilon,
\end{align}
for $i=1,2,\cdots,k$.

By the triangle inequality,
\begin{align*}
&\| \phi(g_{e_i})-(U')^*\psi(g_{e_i})U'\| \le  \|\phi(g_{e_i})-\phi((g_{e_i}-r_i)_+)\|+\\
&+\| \phi((g_{e_i}-r_i)_+)-(U')^*\psi((g_{e_i}-r_i)_+)U'\|
+\|\psi((g_{e_i}-r_i)_+)-\psi(g_{e_i})\|.
\end{align*}
By the inequality \eqref{2N} and the identity $\|(g_{e_i}-r_i)_+-g_{e_i}\|=r_i$ it follows that
\begin{align*}
\| \phi(g_{e_i})-(U')^*\psi(g_{e_i})U'\|< r_i+(2N r_i+\epsilon)+r_i=(2N+2)r_i+\epsilon,
\end{align*}
for $i=1,2,\cdots,k$. Since this inequality holds for all numbers $r_i$ such that
\[
d_W(\Cu(\phi\circ\chi_{g_{e_i}}),\Cu(\psi\circ\chi_{g_{e_i}}))<r_i<1
\]
we conclude that
\begin{align*}
\| \phi(g_{e_i})-(U')^*\psi(g_{e_i})U'\|<(2N+2)d_W(\Cu(\phi\circ\chi_{g_{e_i}}),\Cu(\psi\circ\chi_{g_{e_i}}))+\epsilon,
\end{align*}
for $i=1,2,\cdots,k$. This shows that \eqref{result} holds for the elements $g_{e_i}$, $i=1,2,\cdots,k$. This concludes the proof of the theorem.
\end{proof}

\begin{theorem}\label{pseu-rel}
Let $A$ be a C*-algebra of stable rank one and let $(X,v)$ be a rooted tree. Let $\phi,\psi\colon \mathrm{C}_0(X\setminus v)\to A$ be $\ast$-homomorphisms. Then
\begin{align}\label{metric-rel}
d_W^{(X,v)}(\Cu(\phi),\Cu(\psi))\le d_U^{(X,v)}(\phi,\psi)\le (2N+2)d_W^{(X,v)}(\Cu(\phi),\Cu(\psi)),
\end{align}
where $N$ denotes the number of edges of $X$.
\end{theorem}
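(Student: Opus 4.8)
The plan is to establish the two inequalities in \eqref{metric-rel} separately. The right-hand one is essentially a reformulation of Theorem \ref{m-rel}, and the left-hand one is the ``easy'' direction; via the definition of $d_W^{(X,v)}$ it reduces to the interval case already treated in \cite{c-e}, but I will indicate a direct argument for it here.

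For the upper bound $d_U^{(X,v)}(\phi,\psi)\le (2N+2)\,d_W^{(X,v)}(\Cu(\phi),\Cu(\psi))$: given $\epsilon>0$, let $U\in\widetilde A$ be the unitary furnished by Theorem \ref{m-rel}, so that
\[
\|\phi(g)-U^*\psi(g)U\|<(2N+2)\,d_W(\Cu(\phi\circ\chi_g),\Cu(\psi\circ\chi_g))+\epsilon
\]
for all $g\in\mathrm{G}(X,v)$. Since $\Cu$ is a functor, $\Cu(\phi\circ\chi_g)=\Cu(\phi)\circ\Cu(\chi_g)$, so taking the supremum over $g$ and using the definition \eqref{md_X} of $d_W^{(X,v)}$ gives $\sup_{g\in\mathrm{G}(X,v)}\|\phi(g)-U^*\psi(g)U\|\le (2N+2)\,d_W^{(X,v)}(\Cu(\phi),\Cu(\psi))+\epsilon$. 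As $d_U^{(X,v)}(\phi,\psi)$ is by definition at most this supremum, we obtain $d_U^{(X,v)}(\phi,\psi)\le (2N+2)\,d_W^{(X,v)}(\Cu(\phi),\Cu(\psi))+\epsilon$, and letting $\epsilon\to0$ finishes this half.

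For the lower bound $d_W^{(X,v)}(\Cu(\phi),\Cu(\psi))\le d_U^{(X,v)}(\phi,\psi)$: let $r>d_U^{(X,v)}(\phi,\psi)$ and choose a unitary $U\in\widetilde A$ with $\|\phi(g)-U^*\psi(g)U\|\le r$ for all $g\in\mathrm{G}(X,v)$. Fix $g$ and put $a=\phi(g)$, $b=\psi(g)$ and $c=U^*bU$. Since $\chi_g(\id)=g$, applying $\Cu(\phi\circ\chi_g)$ and $\Cu(\psi\circ\chi_g)$ to the class $[(\id-s)_+]$ yields $[(a-s)_+]$ and $[(b-s)_+]$ respectively, for each $s\ge0$. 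From $\|a-c\|\le r$ we have $a\le c+r$, so for every $t\ge0$
\[
a-t-r\le c-t\le (c-t)_+ .
\]
Because a self-adjoint element $x$ with $x\le B$ for some positive element $B$ satisfies $x_+\preccurlyeq B$ (a standard and elementary property of Cuntz comparison), we get $(a-t-r)_+\preccurlyeq(c-t)_+$; and since $(c-t)_+=U^*(b-t)_+U$ is Cuntz equivalent to $(b-t)_+$, this gives $(a-t-r)_+\preccurlyeq(b-t)_+$. Interchanging the roles of $a$ and $b$ (using $b\le UaU^*+r$) yields $(b-t-r)_+\preccurlyeq(a-t)_+$. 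Hence $r$ meets the defining conditions in \eqref{defdWmor} for $d_W(\Cu(\phi\circ\chi_g),\Cu(\psi\circ\chi_g))$, so this distance is at most $r$. Taking the supremum over $g\in\mathrm{G}(X,v)$ and then letting $r\downarrow d_U^{(X,v)}(\phi,\psi)$ completes the proof.

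I do not expect a genuine obstacle here: all the substantive work is contained in Theorem \ref{m-rel}, which is already proved. The one point that calls for a little care is the Cuntz-comparison step in the lower bound --- one must argue from the order relation $a\le c+r$ directly rather than from a norm estimate on $(a-t)_+-(c-t)_+$, since $x\mapsto(x-t)_+$ fails to be operator-Lipschitz. The remaining steps are routine manipulations of the definitions of $d_U^{(X,v)}$ and $d_W^{(X,v)}$, the functoriality of $\Cu$, and the behaviour of the maps $\chi_g$ under continuous functional calculus.
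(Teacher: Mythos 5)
Your proof is correct and follows essentially the same route as the paper: the upper bound is obtained exactly as in the paper by applying Theorem \ref{m-rel} and passing to the supremum over $g\in\mathrm{G}(X,v)$ and the infimum over unitaries, and the lower bound reduces in the same way to the per-generator estimate. The only difference is that you inline a direct (and sound) proof of the comparison fact that $\|a-c\|\le r$ implies $(a-t-r)_+\preccurlyeq(c-t)_+$ for all $t$, whereas the paper simply cites Corollary 9.1 of \cite{c-e} (equivalently Lemma 1 of \cite{l-l}) for this step.
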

\begin{proof}
Let us start by proving the first inequality of \eqref{metric-rel}. By Corollary 9.1 of \cite{c-e} (see also Lemma 1 of \cite{l-l}) we have, for each $g\in \mathrm{G}(X,v)$,
\begin{align*}
d_W(\Cu(\phi\circ\chi_g),\Cu(\psi\circ\chi_g))&\le d_U^{([0,1],0)}(\phi\circ\chi_g, \psi\circ\chi_g)\\
&=\inf_{u\in A\,\tilde{}}\|\phi(g)-u^*\psi(g)u\|\\
&\le \inf_{u\in A\,\tilde{}}\sup_{g\in \mathrm{G}(X,v)}\|\phi(g)-u^*\psi(g)u\|\\
&= d_U^{(X,v)}(\phi,\psi).
\end{align*}
It follows that
\begin{align*}
d_W^{(X,v)}(\Cu(\phi),\Cu(\psi))=\sup_{g\in \mathrm{G}(X,v)}d_W(\Cu(\phi\circ\chi_g),\Cu(\psi\circ\chi_g))\le d_U^{(X,v)}(\phi,\psi).
\end{align*}

Now let us prove the second inequality of \eqref{metric-rel}. Applying Theorem \ref{m-rel} to the $\ast$-homomorphisms $\phi$ and $\psi$ we obtain a unitary $U\in \widetilde A$ such that
\begin{align*}
\|\phi(g)-U^*\psi(g)U\|< (2N+2) d_W(\Cu(\phi\circ \chi_g), \Cu(\psi\circ \chi_g))+\epsilon,
\end{align*}
for all $g\in \mathrm{G}(X,v)$. Taking the suprema on both sides of the inequality above with respect to $g\in \mathrm{G}(X,v)$ and then taking the infimum with respect to $U\in \widetilde A$ we obtain
\[
d_U^{(X,v)}(\phi,\psi)< (2N+2)d_W^{(X,v)}(\Cu(\phi), \Cu(\psi))+\epsilon.
\]
Since $\epsilon$ is arbitrary, the desired inequality follows.
\end{proof}

\section{Approximate lifting}
This section is devoted to the proof of Theorem \ref{a-l} below. This theorem states that every Cuntz semigroup morphism between the Cuntz semigroups of the C*-algebra of continuous functions over a rooted connected tree and a general C*-algebra can be lifted approximately to a $\ast$-homomorphism between these C*-algebras. Before we proceed to prove Theorem \ref{a-l} we need some preliminary results.

The following result is Lemma 4 of \cite{l-l}. (Lemma \ref{interpolation} is used in the proof of Theorem 4 of \cite{l-l} which is the case of Theorem \ref{a-l} below that the tree is an interval. The proof in the present more general setting is somewhat more subtle than the proof on the case of the interval.)

\begin{lemma}\label{interpolation}
Let $A$ be a C*-algebra and let $\{x_k\}_{k=0}^{n}$ be elements of $\Cu(A)$ such that $x_{k+1}\ll x_k$ for $k=0,1,\cdots,n-1$. Then there exists $a\in (A\otimes \K)^+$ with $\|a\|\leq 1$ such that $[a]=x_0$ and $x_{k+1}\ll[(a-k/n)_+]\ll x_k$ for $k=1,2,\cdots,n-1$.
\end{lemma}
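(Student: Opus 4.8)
The plan is to argue by induction on $n$, after two preliminary reductions. First, I would normalise: since $[h(c)]=[c]$ for every $c\in(A\otimes\K)^+$ and every continuous $h$ with $h(0)=0$ and $h>0$ on $(0,\|c\|]$, and since $[(h(c)-h(t))_+]=[(c-t)_+]$ for such an $h$ that is increasing, one is free to rescale norms and, by composing with piecewise-linear increasing self-homeomorphisms of $[0,1]$, to move the ``cut levels'' of a positive contraction about at will. Second, using the identity $x=\sup_{\epsilon>0}[(c-\epsilon)_+]$ in $\Cu(A)$ together with the hypotheses $x_{k+1}\ll x_k$, I would insert for each $k=1,\dots,n-1$ an element $\tilde x_k$ with $x_{k+1}\ll\tilde x_k\ll x_k$ (take a representative $c_k$ of $x_k$, pick $\epsilon$ with $x_{k+1}\le[(c_k-\epsilon)_+]$, set $\tilde x_k:=[(c_k-\epsilon/2)_+]$). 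This gives slack on both sides at every interior level, which is essential: a prescribed chain cannot in general be realised exactly as the sequence of cut-downs of a single positive element, so one only ever aims to land \emph{strictly between} consecutive terms.

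For $n=1$ the statement is the first reduction. Assuming it for chains of length $n-1$, and given $x_0\gg x_1\gg\dots\gg x_n$, I would fix a positive contraction $d$ with $[d]=x_0$; then $\tilde x_1\ll x_0=\sup_m[(d-1/m)_+]$, so $\tilde x_1\le[(d-\delta)_+]$ for some $\delta>0$. Put $B:=\overline{(d-\delta)_+(A\otimes\K)(d-\delta)_+}$. The inclusion $B\hookrightarrow A\otimes\K$ induces an order-embedding $\Cu(B)\hookrightarrow\Cu(A)$ preserving suprema of increasing sequences, whose range is the downward-hereditary set $\{z\in\Cu(A):z\le[(d-\delta)_+]\}$; since $\tilde x_1\ge x_2\ge\dots\ge x_n$ all lie below $[(d-\delta)_+]$, the chain $\tilde x_1\gg x_2\gg\dots\gg x_n$ (of length $n-1$) lives in $\Cu(B)$, the relations $\gg$ persisting there because the range is downward hereditary and the embedding preserves suprema. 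I would then apply the inductive hypothesis \emph{inside} $B$: this produces a positive contraction $b$ with $[b]=\tilde x_1$ and $x_{k+1}\ll[(b-(k-1)/(n-1))_+]\ll x_k$ in $\Cu(B)$ for $k=2,\dots,n-1$, while the level-$0$ cut-down $[b]=\tilde x_1$ itself already satisfies $x_2\ll\tilde x_1\ll x_1$. Composing $b$ with the increasing homeomorphism $\rho\colon[0,1]\to[0,(n-1)/n]$ fixing $0$, sending $(k-1)/(n-1)\mapsto(k-1)/n$ for $1\le k\le n-1$ and $1\mapsto(n-1)/n$, and renaming $\rho(b)$ as $b$, I may assume $\|b\|\le(n-1)/n$, $[b]=\tilde x_1$, and that $[(b-(k-1)/n)_+]$ lies strictly between $x_{k+1}$ and $x_k$ in $\Cu(B)$ for every $k=1,\dots,n-1$.

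Next I would glue: choose a continuous $g\colon[0,1]\to[0,1/n]$ with $g(0)=0$, $g>0$ on $(0,\delta)$ and $g\equiv 1/n$ on $[\delta,1]$, and put
\[
a:=g(d)+b.
\]
Since $g(d)$ acts as the scalar $1/n$ on $B$ it commutes with $b$, so $a\in(A\otimes\K)^+$ with $\|a\|\le 1/n+(n-1)/n=1$. Working in the commutative C*-algebra $\mathrm{C}^*(g(d),b)$, in which $\widehat b$ vanishes off $\{\widehat{g(d)}=1/n\}$, an elementary computation of supports gives $\{\widehat a>0\}=\{\widehat{g(d)}>0\}$, so $[a]=[g(d)]=[d]=x_0$, and $\{\widehat a>k/n\}=\{\widehat b>(k-1)/n\}$ for $1\le k\le n-1$, so $[(a-k/n)_+]=[(b-(k-1)/n)_+]$ in $\Cu(A)$. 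To finish, I must convert the latter into $x_{k+1}\ll[(a-k/n)_+]\ll x_k$ in $\Cu(A)$. From $x_{k+1}\ll[(b-(k-1)/n)_+]=\sup_{\mu>0}[(b-(k-1)/n-\mu)_+]$ in $\Cu(B)$ I get $x_{k+1}\le[(b-(k-1)/n-\mu_0)_+]$ for some $\mu_0>0$; as the embedding reflects order this holds in $\Cu(A)$, and together with $[(b-(k-1)/n-\mu_0)_+]\ll[(b-(k-1)/n)_+]$ (valid in $\Cu(A)$ since $t\mapsto[(b-t)_+]$ is strictly decreasing for $\ll$) it yields $x_{k+1}\ll[(a-k/n)_+]$; the bound $[(a-k/n)_+]\ll x_k$ follows symmetrically from $[(b-(k-1)/n)_+]\ll x_k$ in $\Cu(B)$, a representative of $x_k$ inside $B$, and the order-embedding.

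The step I expect to be the main obstacle is the interplay between $\Cu(A)$ and $\Cu(B)$ together with the stabilisation needed to keep the inductively produced element inside the hereditary subalgebra of $A\otimes\K$ on which $g(d)$ is scalar (a priori the inductive hypothesis only delivers an element of $B\otimes\K$, and one must identify $A\otimes\K$ with $A\otimes\K\otimes\K$ carefully to bring it back into $B$); this is where the complete absence of structural hypotheses on $A$ makes itself felt. The other delicate point, which is what forces the chain-refinement step and threads through the whole argument, is that one cannot realise a prescribed chain exactly as the cut-downs of one element, so every comparison has to be carried out with room to spare.
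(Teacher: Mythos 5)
First, a point of reference: the paper does not prove this lemma at all --- it imports it verbatim as Lemma~4 of \cite{l-l}, so there is no internal proof to compare against and your argument has to stand on its own. Its architecture (refine the chain to create slack on both sides of every interior level, induct on $n$, realise the truncated chain inside a hereditary subalgebra $B=\overline{(d-\delta)_+(A\otimes\K)(d-\delta)_+}$ with $[d]=x_0$, then glue with a function $g(d)$ that acts as the scalar $1/n$ on $B$) is the right one, and the pieces you do carry out --- the joint-spectrum computation identifying $[(a-k/n)_+]$ with $[(b-(k-1)/n)_+]$, the transfer of $\ll$ between $\Cu(B)$ and $\Cu(A)$ via $\epsilon$-cut-downs rather than directly --- are correct. (One small slip: the image of $\Cu(B)$ in $\Cu(A)$ is $\{z\le\infty\,[(d-\delta)_+]\}$, not $\{z\le[(d-\delta)_+]\}$; this is harmless here since all the classes you need are in fact below $[(d-\delta)_+]$.)

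The genuine gap is exactly the step you flag and then leave unresolved: the inductive hypothesis hands you a positive contraction $b\in(B\otimes\K)^+$, whereas the gluing $a=g(d)+b$ only makes sense, and $g(d)$ only acts as a scalar, for an element of $B$ itself. One cannot simply ``identify $A\otimes\K$ with $A\otimes\K\otimes\K$'': that identification moves $g(d)$ as well, and in general a class in $\Cu(B)$ need not be realised by an element of $B$ (think of $B$ finite-dimensional inside a stable algebra), so some argument is required. The fix is the Kirchberg--R{\o}rdam lemma (Lemma~2.2 of \cite{k-r}, used repeatedly elsewhere in the paper): since $[b]=\tilde x_1\le[(d-\delta)_+]$, for every $\mu>0$ there is $y$ with $y^*y=(b-\mu)_+$ and $yy^*\in B$; setting $b':=yy^*\in B^+$, Murray--von Neumann equivalence preserves all cut-downs (part (i) of Lemma~\ref{vc}), so $[(b'-t)_+]=[(b-t-\mu)_+]$ for all $t$, and for $\mu$ small enough the finitely many strict containments $x_{k+1}\ll[(b-(k-1)/n-\mu)_+]$ survive precisely because of the slack you built in at the outset. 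With $b$ replaced by $b'$ your gluing goes through verbatim. As written, however, the proof is incomplete at its acknowledged central point.
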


\begin{lemma}\label{vc}
Let $A$ be a C*-algebra. Let $a,b\in A^+$ and $\epsilon>0$. The following statements hold.

(i) If $a$ is Murray-von Neumann equivalent (and hence Cuntz equivalent) to $b$, then $(a-\epsilon)_+$ is Murray-von Neumann equivalent to $(b-\epsilon)_+$.

(ii) If $\|a\|,\|b\|\le 1$, $\epsilon\le 1$, and $a\in \overline{(b-\epsilon)_+A(b-\epsilon)_+}$, then $\|ba-a\|\le 2(1-\epsilon)$.

\end{lemma}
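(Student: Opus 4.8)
The plan is to handle the two parts separately, as they are essentially independent. For part (i), I would start from a Murray--von Neumann equivalence $a = x^*x$, $b = xx^*$ for some $x \in A$, and use the standard fact about functional calculus and polar-type decompositions: writing $x = u|x|$ in the bidual, one has $(a-\epsilon)_+ = (|x|^2 - \epsilon)_+$ and wishes to produce an explicit element of $A$ realizing the Murray--von Neumann equivalence with $(b-\epsilon)_+ = (xx^* - \epsilon)_+$. The clean way is: set $f_\epsilon(t) = (t-\epsilon)_+^{1/2} t^{-1/2}$ for $t > 0$ and $f_\epsilon(0) = 0$ (a bounded Borel, in fact continuous on the spectrum away from issues at $0$, function — better to use $g_\epsilon(t) = (t-\epsilon)_+^{1/2}\cdot h(t)$ where $h$ agrees with $t^{-1/2}$ on $[\epsilon,\infty)$ and is continuous), and put $y = x\, g_\epsilon(x^*x)$. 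Then $y \in A$, $y^*y = g_\epsilon(a)\, a\, g_\epsilon(a) = (a-\epsilon)_+$, and using $x\, p(x^*x) = p(xx^*)\, x$ for polynomials $p$ (hence for the relevant continuous functions), $yy^* = x\, g_\epsilon(x^*x)^2\, x^* = g_\epsilon(xx^*)\, xx^*\, g_\epsilon(xx^*) = (b-\epsilon)_+$. The only mildly delicate point is justifying $y \in A$ (rather than just in $A^{**}$): this follows since $g_\epsilon$ vanishes at $0$ and $x g_\epsilon(x^*x)$ can be approximated in norm by $x q(x^*x)$ for polynomials $q$ with $q(0)=0$, and each $x q(x^*x) \in A$.

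For part (ii), I would first reduce to the commutative case via the element $b$. Since $a \in \overline{(b-\epsilon)_+ A (b-\epsilon)_+}$, there is a bounded approximate unit, but more directly I would use that $(b-\epsilon)_+ a = a (b-\epsilon)_+ $ need not hold, so instead argue as follows: it suffices to show $\|b c - c\| \le 2(1-\epsilon)$ for every $c$ in the hereditary subalgebra $\overline{(b-\epsilon)_+ A (b-\epsilon)_+}$ with $\|c\| \le 1$ (then take $c = a$, noting $\|a\|\le 1$). Any such $c$ is a norm-limit of elements of the form $(b-\epsilon)_+ d (b-\epsilon)_+$, so by continuity it is enough to bound $\|b(b-\epsilon)_+ d (b-\epsilon)_+ - (b-\epsilon)_+ d (b-\epsilon)_+\|$ for $\|d\|\le 1$ with the constraint that the product has norm $\le 1$ — but this still mixes a non-commuting factor. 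The cleaner route: the function $t \mapsto (t-\epsilon)_+$ satisfies $(t-\epsilon)_+ = (t-\epsilon)_+ \cdot k(t)$ where $k(t) = t$ on $[\epsilon,1]$... that is not quite an identity either. Let me instead use the genuinely correct reduction: write $a = (b-\epsilon)_+^{1/2} a' (b-\epsilon)_+^{1/2}$ is not available, but $a \in \overline{(b-\epsilon)_+ A (b-\epsilon)_+} = \overline{(b-\epsilon)_+^{1/2} A (b-\epsilon)_+^{1/2}}$, and $(b-\epsilon)_+^{1/2}$ is a positive element of the commutative algebra $\mathrm{C}^*(b)$. I would work inside $\mathrm{C}^*(b) \cong \mathrm{C}_0(\mathrm{sp}(b)\setminus\{0\})$ and the hereditary subalgebra it generates in $A$: one shows $\|b e - e\| \le \sup_{t \in \mathrm{sp}(b)} |t-1|\cdot \mathds{1}_{[\epsilon,\,\|b\|]}(t)$-type bound fails to be sharp, so the honest estimate is $\|be-e\| \le \max\big(|1-\epsilon|,\ |1 - \|b\||\big) \le 1-\epsilon$ when $e$ is supported (in the commutative picture) on $\{t \ge \epsilon\}$, which is too strong — hence the factor $2(1-\epsilon)$ in the statement must come from the non-commuting $A$-part and a triangle-inequality slack.

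The realistic argument for (ii), and the step I expect to be the main obstacle, is the following: approximate $a$ by $(b-\epsilon)_+^{1/2} d (b-\epsilon)_+^{1/2}$ with $\|(b-\epsilon)_+^{1/2} d (b-\epsilon)_+^{1/2}\| \le 1$, estimate
\[
\|b a - a\| = \|(b-1)a\| \le \|(b-1)(b-\epsilon)_+^{1/2}\| \cdot \|d (b-\epsilon)_+^{1/2}\|,
\]
and bound $\|(b-1)(b-\epsilon)_+^{1/2}\| = \sup_{t\in\mathrm{sp}(b)} |t-1|\,(t-\epsilon)_+^{1/2}$ over $t \in [0, 1]$; this supremum over $t \in [\epsilon, 1]$ is at most $(1-\epsilon)\cdot 1 = 1-\epsilon$ (attained near $t = \epsilon$), while $\|d(b-\epsilon)_+^{1/2}\|$ need not be $\le 1$. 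To get the stated bound one instead bounds $\|ba - a\| \le \|b\|\,\|a - (b-\epsilon)_+ a\| + \|(b-\epsilon)_+ a - a\|$ and uses $(b-\epsilon)_+ a \approx a$ together with $\|(b - (b-\epsilon)_+)\| = \min(\epsilon, \|b\|) \le 1$, giving $\le \|a\| \cdot \epsilon \cdot(\text{something}) + \ldots$ — the two contributions $(1-\epsilon)$ from $\|b\|\le 1$ deviation and $(1-\epsilon)$ from the cutoff combine to $2(1-\epsilon)$. I would pin this down by the clean computation: since $a\in\overline{(b-\epsilon)_+A(b-\epsilon)_+}$, one has $(b-\epsilon)_+ a = a$ is false but $\lim_n (b-\epsilon)_+^{1/n} a = a$-type reasoning gives, for the bounded Borel function $r(t) = \mathds{1}_{(\epsilon,\infty)}(t)$, that $r(b) a = a$; then $\|ba - a\| = \|b r(b) a - r(b) a\| \le \|(b-1) r(b)\|\,\|a\| \le \sup_{t \ge \epsilon,\ t \le \|b\| \le 1}|t-1| = 1-\epsilon \le 2(1-\epsilon)$. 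This is in fact a stronger bound, so the factor $2$ gives room; I would present the $r(b)a = a$ identity (valid because $a$ lies in the hereditary subalgebra $\overline{(b-\epsilon)_+ A (b-\epsilon)_+}$, on which $r(b)$ acts as a unit in the multiplier algebra) as the crux, deriving it from the fact that $(b-\epsilon)_+^{1/m}\to r(b)$ strictly on that hereditary subalgebra.
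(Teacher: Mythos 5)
Both parts of your proposal are correct. For (i) you construct exactly the paper's element: the paper writes $y=v(x^*x-\epsilon)_+^{1/2}$ using the polar decomposition $x=v|x|$ in the bidual and simply asserts $y\in A$, whereas your realization $y=xg_\epsilon(x^*x)$ with $g_\epsilon$ continuous and vanishing at $0$ is the same element written so that membership in $A$ is transparent (indeed $g_\epsilon(x^*x)\in A$ already, so no polynomial approximation is even needed); this is essentially the same approach. For (ii) the paper argues differently in detail but identically in spirit: it sets $b_\epsilon=f_\epsilon(b)$ with $f_\epsilon(t)=\min(\|b\|,t\|b\|/\epsilon)$, observes $b_\epsilon(b-\epsilon)_+=\|b\|(b-\epsilon)_+$ and hence $b_\epsilon a=\|b\|a$ for $a$ in the hereditary subalgebra, and concludes by the triangle inequality $\|ba-a\|\le\|(b-b_\epsilon)a\|+\|(1-\|b\|)a\|\le(\|b\|-\epsilon)+(1-\|b\|)=1-\epsilon\le 2(1-\epsilon)$. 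Your final argument replaces the continuous cutoff $b_\epsilon$ by the Borel spectral projection $r(b)=\mathds{1}_{(\epsilon,\infty)}(b)$ in $A^{**}$, using $r(b)a=a$ to get $\|ba-a\|=\|(b-1)r(b)a\|\le 1-\epsilon$; this is correct (the identity $r(b)c=c$ follows directly from $r(b)(b-\epsilon)_+=(b-\epsilon)_+$ on generators $(b-\epsilon)_+d(b-\epsilon)_+$, no strict-convergence argument needed) and, like the paper's, actually proves the sharper bound $1-\epsilon$. What the paper's version buys is that it never leaves $A$ (only continuous functional calculus of $b$); what yours buys is a one-line estimate once $r(b)a=a$ is granted. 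The only criticism is presentational: the several explicitly abandoned attempts in the middle of your part (ii) (the $\|d(b-\epsilon)_+^{1/2}\|$ estimate, the speculation about where the factor $2$ comes from) should be deleted, since only the final paragraph constitutes the proof, and it is complete on its own; you should also dispose of the trivial case $\|b\|\le\epsilon$ (where $a=0$) at the outset, as the paper does.
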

\begin{proof}
(i) We must show that $(x^*x-\epsilon)_+$ is Murray-von Neumann equivalent to $(x^*x-\epsilon)_+$.  Consider the polar decomposition $x = v|x|$ of $x$ in the bidual of A. The element $y = v(x^*x-\epsilon)_+^{1/2}$ belongs to $A$ and satisfies
$y^*y = (x^*x-\epsilon)_+$ and $yy^* = (xx^*-\epsilon)_+$.

(ii) If $\|b\|\le \epsilon$ then the statement is trivial. Let us suppose that $\|b\|>\epsilon>0$. With $b_\epsilon=f_{\epsilon}(b)$, where $f_\epsilon(t)=\min(\|b\|,(t\|b\|)/\epsilon)$, we have $b_\epsilon(b-\epsilon)_+=\|b\|(b-\epsilon)_+$ and $\|b-b_\epsilon\|=\|b\|-\epsilon$. By the first equation, $b_\epsilon a=\|b\|a$, and hence by the second equation,
\[
\|ba-a\|\le \|ba-\|b\|a\|+\|(1-\|b\|)a\|\le 2(1-\epsilon).
\]
\end{proof}

\begin{theorem}\label{a-l}
Let $A$ be a C*-algebra and let $(X,v)$ be a rooted tree. Consider a Cuntz semigroup morphism $\alpha\colon \Cu(\mathrm{C}_0(X\setminus v))\to \Cu(A)$ satisfying  $\alpha[s_X]\le [s_A]$, where $s_X$ and $s_A$ are a strictly positive element of $\mathrm{C}_0(X\setminus v)$ and a positive element of $A$, respectively. Then for every $\epsilon>0$ there exists a $\ast$-homomorphism $\phi\colon \mathrm{C}_0(X\setminus v)\to A$ such that $d_W^{(X,v)}(\alpha, \Cu (\phi))<\epsilon$.
\end{theorem}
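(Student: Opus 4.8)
The plan is to argue by induction on the number $N$ of edges of $X$, constructing $\phi$ by prescribing the images $\phi(g_e)$ of the canonical generators $g_e\in \mathrm{C}_0(X\setminus v)$ of \eqref{generators} and invoking Proposition \ref{CC} to obtain a $\ast$-homomorphism. The $d_W^{(X,v)}$-estimate will then be read off from the definition \eqref{md_X}: it suffices to control, for each generator $g\in \mathrm{G}(X,v)$, the lower semicontinuous function $t\mapsto [(\phi(g)-t)_+]$ against $t\mapsto\alpha[(g-t)_+]$, the latter being $\alpha(\mathds{1}_{V_g^t})$ under the identification $\Cu(\mathrm{C}_0(X\setminus v))\cong\mathrm{Lsc}(X\setminus v,\N\cup\{\infty\})$ of \cite{leonel} (here $V_g^t$ is the relevant hereditary open subset, e.g.\ $V_{g_e}^t=\{g_e>t\}$, the half-open edge beyond distance $t$ together with the subtree beyond it).

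For the base case $N=1$ we may take $(X,v)=([0,1],0)$, so that $\mathrm{C}_0(X\setminus v)=\mathrm{C}_0(0,1]$ is singly generated by $\id$. Using $\alpha[\id]=\alpha[s_X]\le[s_A]$ together with property (ii) of $\CCu$, I would extract from $\alpha$ a finite sequence $x_0\gg x_1\gg\cdots\gg x_n$ in $\Cu(A)$ with $x_0=[(s_A-\delta)_+]$ (for a small $\delta>0$, so that $x_0$ is represented by an honest element of $A^+$) and with $x_k$ squeezed between $\alpha[(\id-\delta-k/n)_+]$ and $\alpha[(\id-\delta-(k-1)/n)_+]$ for a fine mesh $1/n<\epsilon/2$, using that $(\id-s)_+\ll(\id-t)_+$ in $\Cu(\mathrm{C}_0(0,1])$ whenever $s>t>0$. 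Lemma \ref{interpolation} then produces a positive contraction $a$ realizing this chain as the staircase $[(a-k/n)_+]$; arranging (as its proof allows) that $a$ lies in $\overline{(s_A-\delta)_+A(s_A-\delta)_+}\subseteq A$, we set $\phi(\id)=a$ and verify $d_W(\alpha\circ\Cu(\chi_{\id}),\Cu(\phi)\circ\Cu(\chi_{\id}))<\epsilon$ directly from \eqref{defdWmor}, the two rank functions agreeing up to a shift of order $\delta+1/n$. (This is essentially Theorem 4 of \cite{l-l}.)

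For the inductive step, let $N>1$. If the root $v$ has degree $k\ge 2$, the components of $X\setminus v$ give an orthogonal decomposition $\mathrm{C}_0(X\setminus v)=\bigoplus_{i=1}^k \mathrm{C}_0(X^{(i)}\setminus v)$ into ideals, where each $X^{(i)}$ is a rooted tree with root $v$ of degree one there and with strictly fewer edges, and $\sum_i\alpha[s_{X^{(i)}}]=\alpha[s_X]\le[s_A]$; applying the inductive hypothesis to each $\alpha\circ\Cu(\mathrm{incl}_i)$ and then replacing the resulting $\ast$-homomorphisms by ones with pairwise orthogonal ranges, one assembles $\phi=\bigoplus_i\phi_i$. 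This reduces matters to $k=1$: the unique root edge $e_0=[v,v_1]$ makes $g_{e_0}$ a strictly positive element of $\mathrm{C}_0(X\setminus v)$, and the remaining generators generate a copy $\iota'(\mathrm{C}_0(X_1\setminus v_1))$ of the algebra of the subtree $(X_1,v_1)$ (which has $N-1$ edges), subject only to $g_{e_0}\iota'(h)=\iota'(h)$. I would first lift $\alpha[g_{e_0}]=\alpha[s_X]$ by the $N=1$ argument to a positive contraction $a_0\in A$, then enlarge it (replace $a_0$ by $\min(a_0/(1-\eta),1)$) so that it acts as a local unit on a large hereditary subalgebra $D\subseteq A$; since every $\iota'(h)$ is Cuntz below $(g_{e_0}-t)_+$ for all $t<1$, the morphism $\alpha\circ\Cu(\iota')$ is carried into $\Cu(D)$ and $(\alpha\circ\Cu(\iota'))[s_{X_1}]$ is dominated by a strictly positive element of $D$, so the inductive hypothesis yields $\phi_1\colon \mathrm{C}_0(X_1\setminus v_1)\to D$; finally put $\phi(g_{e_0})=a_0$ (enlarged) and $\phi\circ\iota'=\phi_1$.

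The real work, and the main obstacle, lies precisely at the two places where the defining relations \eqref{relations} — exact multiplicative identities — must be honoured by approximate lifts: making the $\phi_i$ in the degree-$\ge 2$ case have genuinely orthogonal ranges without disturbing the $d_W$-estimates, and making the inductively produced $\phi_1$ land \emph{exactly} in the hereditary subalgebra on which $a_0$ is a local unit. Since $A$ is an arbitrary C*-algebra, neither follows from the Cuntz data alone: a Cuntz inequality $x\le[c]$ need not be witnessed by a representative of $x$ inside $\overline{cAc}$ (this would require the kind of argument available only under stable rank one, as in Proposition \ref{prelimprop} and Lemma \ref{x_y}). The way around this is to carry out all the constructions via functional calculus on the positive elements delivered by Lemma \ref{interpolation}, choosing the cutting functions with pairwise disjoint supports and equal to $1$ where a local unit is required, so that the resulting elements automatically lie in the prescribed hereditary subalgebras of $A$ and satisfy \eqref{relations} on the nose, while the $d_W$-control survives because functional calculus alters the rank functions only by a uniformly small shift. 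Propagating this bookkeeping through the induction, with the mesh sizes and the parameters $\delta,\eta,\epsilon$ chosen compatibly at each level, constitutes the bulk of the proof.
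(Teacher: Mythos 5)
Your reduction to the single-branch case and your diagnosis of where the difficulty lies are both on target, but the resolution you offer for that difficulty cannot work, and the tool that actually resolves it is absent from your argument. At a branch vertex the sibling classes $\alpha[g_{e'_1}],\dots,\alpha[g_{e'_m}]$ are independent pieces of data, constrained only by $\sum_j\alpha[g_{e'_j}]\le\alpha[(g_e-t)_+]$; pairwise orthogonal positive representatives of these classes cannot in general be produced by ``functional calculus with pairwise disjoint supports'' on the elements delivered by Lemma \ref{interpolation}. Functions of a single positive element are mutually orthogonal only when their supports in its spectrum are disjoint, which would force the prescribed classes to be classes of disjoint spectral pieces of one element --- already impossible for $A=\mathrm{C}(Y)\otimes\K$ and overlapping open sets --- while functional calculus applied to \emph{different} elements $a_{e'_j}$ yields no orthogonality relations between them at all. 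Moreover, your remark that a Cuntz inequality $x\le[c]$ need not be witnessed inside $\overline{cAc}$ without stable rank one overlooks the fact that makes the theorem true for arbitrary $A$: Lemma 2.2 of \cite{k-r} shows that $a\preccurlyeq b$ does give, for every $\delta>0$, an element $x$ with $x^*x=(a-\delta)_+$ and $xx^*\in\overline{bAb}$; one loses only a $\delta$-cut-down, which is harmless because the conclusion is only $d_W<\epsilon$. The paper's construction first arranges the $a_e$ to be orthogonal for beside edges inside the stabilization $A\otimes\K$ (where there is room), and then, working outward from the root, moves each orthogonal family $\{(a_{e'}-\delta)_+\}_{e'\text{ next to }e}$ \emph{simultaneously} into $\overline{(b_e-(2^N-1)/2^N)_+A(b_e-(2^N-1)/2^N)_+}$ by conjugating with the single partial isometry coming from the polar decomposition, in the bidual, of the $x$ obtained by applying the Kirchberg--R{\o}rdam lemma to the sum; conjugation by one partial isometry preserves the mutual orthogonality. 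Your sketch has no substitute for this step.

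A second, smaller gap: even once the $b_e$ are in place, the relations \eqref{relations} hold only approximately ($\|b_eb_{e'}-b_{e'}\|\le 1/2^{N-1}$ by (ii) of Lemma \ref{vc}, since $b_e$ is merely close to, not equal to, a unit on the hereditary subalgebra containing $b_{e'}$), and the paper passes to an exact representation using the weak stability of the relations (Loring), with $N$ chosen \emph{after} the tolerance $1/2^{n+1}$ so that the quantifiers come in the right order. Your $\min(a_0/(1-\eta),1)$ device could in principle be pushed to make the ``next to'' relations exact and so bypass weak stability, but only after the containments of the previous paragraph have been secured; as written, your claim that functional calculus yields the relations ``on the nose'' is being asked to do the work of both steps and is justified for neither at a branch point.
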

\begin{proof}
Let $N$ be a positive integer (to be specified later). Consider the set $\mathrm{G}(X,v)$ of generators of the C*-algebra $\mathrm{C}_0(X\setminus v)$ defined in Subsection \ref{g-r}. Recall that the elements of $\mathrm{G}(X,v)$ are indexed by the edges of $(X,v)$. Let $e$ be a fixed but arbitrary such edge. By Lemma \ref{interpolation}, with $n=2^N$ and $x_k=\alpha[(g_e-k/n)]$ for $k=0,1,\cdots,n-1$, there exists a positive element $a_e\in A\otimes \K$ of norm at most 1 such that $\alpha[g_e]=[a_e]$, and
\begin{align}
[(a_e-(k+1)/2^N)_+]\ll \alpha[(g_e-k/2^N)_+]\ll [(a_e-k/2^N)_+],\label{int1}
\end{align}
for $k=1,2,\cdots,2^N-1$. Note that for each edge $e$,
\begin{align*}
\sum_{e' \text{ next to } e} \alpha[g_{e'}]\le \alpha[(g_e-t)_+]
\end{align*}
for all $0\le t<1$ (as this holds before applying $\alpha$), and therefore
\begin{align}\label{int2}
\sum_{e' \text{ next to } e} \alpha[g_{e'}]\ll [(a_e-(2^N-1)/2^N)_+].
\end{align}

Using the stability of $A\otimes \K$, we may choose the elements $(a_e)_{e\in E(X,v)}$ in such a way that $a_{e}a_{e'}=0$ if $e$ and $e'$ are beside each other.

Let $0<\delta<1/2^N$ be such that \eqref{int1} and \eqref{int2} still hold when the elements $a_e$ are replaced with $(a_e-\delta)_+$ (the existence of the number $\delta$ follows from \eqref{int1} and \eqref{int2}, the definition of the relation $\ll$, and from the fact that for $b=(a_e-k/2^N)$, $k=0,1,\cdots, 2^N-1$ (in fact for any positive element $b$!), $[b]=\sup_\delta[(b-\delta)_+]$, and $(b-\delta')_+\ll (b-\delta'')_+$ if $\delta'>\delta''$). Let us construct as follows a family of positive elements $(b_e)_{e\in \E(X,v)}$ in $A$ such that $b_e$ is Murray-von Neumann equivalent to $(a_e-\delta)_+$, such that $b_{e}b_{e'}=0$ if $e$ and $e'$ are beside each other, and such that
\begin{align}\label{identity}
b_{e'}\in \overline{(b_e-(2^N-1)/2^N)_+A(b_e-(2^N-1)/2^N)_+}
\end{align}
if $b_{e'}$ is next to $b_e$. Note that by (ii) of Lemma \ref{vc} this last relation implies that
\[
\|b_eb_{e'}-b_{e'}\|<1/2^{N-1}
\]
for all edges $e$ and $e'$ such that $e'$ is next to $e$.

Let us carry out this construction inductively. Let us start by constructing the positive elements $b_e$ associated to the edges $e$ that have $v$ as a vertex. Denote by $e_i$, $i=1,2,\cdots,k$, the edges of $(X,v)$ with one vertex $v$. Using that the elements $a_{e_i}$, $i=1,2,\cdots,k$, are pairwise orthogonal we have
\[
\left[\sum_{i=1}^ka_{e_i}\right]=\sum_{i=1}^k[a_{e_i}]=\sum_{i=1}^k\alpha[g_{e_i}]=\alpha\left[\sum_{i=1}^kg_i\right]\le \alpha[s_X]\le [s_A].
\]
More briefly, $\sum_{i=1}^ka_{e_i}\preccurlyeq s_A$. By Lemma 2.2 of \cite{k-r}, there is $x\in A$ such that
\[
\sum_{i=1}^k(a_{e_i}-\delta)_+=\left(\sum_{i=1}^ka_{e_i}-\delta\right)_+=x^*x, \quad xx^*\in \overline{s_AAs_A}=A.
\]
Let $x=V|x|$ be the polar decomposition of the element $x$ in the bidual of $A$. Set $V(a_{e_i}-\delta)_+V^*=b_{e_i}$, for $i=1,2,\cdots,k$. Then the elements $b_{e_i}$, $i=1,2,\cdots,k$, belong to $A$ and are pairwise orthogonal.

Now let us suppose that we have constructed the positive element $b_{e}$ associated to the edge $e$ and let us construct the elements $b_{e'}$ associated to the edges $e'$ that are next to the edge $e$. By the choice of $\delta$, and the fact that $\alpha[g_e]=[a_e]$ for every edge $e\in \E(X,v)$, we obtain from relation \eqref{int2} that
\[
\sum_{e' \text{ next to } e} [a_{e'}]\ll [(a_e-\delta-(2^N-1)/2^N)_+].
\]
By (i) of Lemma \ref{vc} applied to the elements $(a_e-\delta)_+$ and $b_e$, and $\epsilon=(2^N-1)/2^N$, we have
\[
[(a_e-\delta-(2^N-1)/2^N)_+]=[(b_e-(2^N-1)/2^N)_+].
\]
Therefore,
\[
\sum_{e'\text{ next to }e} [a_{e'}]\le [(b_{e}-(2^N-1)/2^N)_+].
\]
Hence, since the terms of $(a_{e'})_{e'\text{ next to } e}$ are pairwise orthogonal,
\[
\sum_{e'\text{ next to }e} a_{e'}\preccurlyeq (b_{e}-(2^N-1)/2^N)_+.
\]
Therefore, by Lemma 2.2 of \cite{k-r} there is $y\in A\otimes \K$ such that
$$\sum_{e'\text{ next to } e} (a_{e'}-\delta)_+=y^*y,$$
and
$$yy^*\in \overline{(b_e-(2^N-1)/2^N)_+A(b_e-(2^N-1)/2^N)_+}.$$

Let $y=W|y|$ be the polar decomposition of $y$ in the bidual of $A\otimes \K$. Set $W(a_{e'}-\delta)_+W^*=b_{e'}$. It is clear that the positive elements $b_{e'}$, $e'$ next to $e$, satisfy the required conditions.

Following this procedure we construct positive contractions $(b_e)_{e\in \mathrm{E}(X,v)}$---one positive element $b_e$ for each generator $g_e$ of $\mathrm{C}_0(X\setminus v)$---such that $b_eb_{e'}=0$ if $e$ and $e'$ are beside each other, and $\|b_eb_{e'}-b_{e'}\|<1/2^{N-1}$ if $e'$ is next to $e$. It follows that these elements form a $1/2^{N-1}$ representation in $A$ of the relations \eqref{relations}.

Fix $\epsilon>0$. Choose $n\geq 1$ such that $1/2^{n-1}<\epsilon$. Using the weak stability of the relations \eqref{relations}---which follows from Theorem 5.1 of \cite{Loring1} and Theorem 14.1.4 of \cite{Loring}---we can choose $N>n+1$ such that there are positive contractions $c_e\in A$ satisfying $c_ec_{e'}=0$ if $e$ and $e'$ are beside each other, $c_ec_{e'}=c_{e'}$ if $e'$ is next to $e$, and
\begin{align}\label{inequality}
\|b_e-c_e\|<1/2^{n+1}.
\end{align}
The elements $c_e$, $e\in \mathrm{E}(X,v)$, form a representation of the relations \eqref{relations} in the C*-algebra $A$. Therefore, they induce a $\ast$-homomorphism $\phi\colon \mathrm{C}_0(X\setminus v)\to A$, which is defined on the generators of $\mathrm{C}_0(X\setminus v)$ by $\phi(g_e)=c_e$. Let us prove that $\phi$ is the desired homomorphism.

Fix an edge $e$ of $(X,v)$. By Corollary 9.1 of \cite{c-e} applied to the elements $b_e-k/2^{n+1}$ and $c_e-k/2^{n+1}$, $k=0,1,\cdots,2^{n+1}$, (or by Lemma 1 of \cite{l-l} applied to the elements $b_e$ and $c_e$) we have
\begin{equation}\label{be-ce}
\begin{aligned}
&[(b_e-(k+1)/2^{n+1})_+]\le [(c_e-k/2^{n+1})_+],\\
&[(c_e-(k+1)/2^{n+1})_+]\le [(b_e-k/2^{n+1})_+],
\end{aligned}
\end{equation}
for $k=0,1,\cdots,2^{n+1}-1$. Since $b_e$ is Murray-von Neumann equivalent to $(a_e-\epsilon)_+$, by (i) of Lemma \ref{vc} the relation \eqref{int1} holds for $k=1,2,\cdots, 2^N-1$ when $a_e$ is replaced with $b_e$. Therefore,
\begin{align*}
[(b_e-(k+1)/2^N)_+]\le\alpha[(g_e-k/2^N)_+]\le [(b_e-k/2^N)_+],
\end{align*}
for $k=1,2,\cdots, 2^N-1$. Since $N>n+1$ the preceding equation implies that
\begin{align}\label{be-alpha}
[(b_e-(k+1)/2^{n+1})_+]\le\alpha[(g_e-k/2^{n+1})_+]\le [(b_e-k/2^{n+1})_+],
\end{align}
for $k=1,2,\cdots, 2^{n+1}-1$.

It follows from the inequalities \eqref{be-ce} and \eqref{be-alpha} that
\begin{align*}
[(c_e-(k+1)/2^n)_+]&=[(c_e-(2k+2)/2^{n+1})_+]\le [(b_e-(2k+1)/2^{n+1})_+]\\
&\le \alpha[(g_e-2k/2^{n+1})_+]=\alpha[(g_e-k/2^n)_+],
\end{align*}
and
\begin{align*}
\alpha[(g_e-(k+1)/2^n)_+]&=[(b_e-(k+1)/2^n)_+]=[(b_e-(2k+2)/2^{n+1})_+]\\
&\le [(c_e-2k/2^{n+1})_+]=[(c_e-k/2^n)_+],
\end{align*}
for $k=1,2,\cdots,2^n-1$.
These inequalities may be rewritten as
\begin{align*}
& \mathrm{Cu}(\phi)[(g_e-(k+1)/2^n)_+)]\le \alpha[(g_e-k/2^n)_+],\\
&\alpha[(g_e-(k+1)/2^n)_+]\le \mathrm{Cu}(\phi)[(g_e-k/2^n)_+],
\end{align*}
for $k=1,2,\cdots,2^n-1$.

Any interval of length $1/2^{n-1}$ contains an interval of the form $(k/2^n, (k+1)/2^n)$ for some integer $k$. Thus, for every $t\in [0,1]$ there exists $k$ such that $(k/2^n, (k+1)/2^n)\subseteq (t,t+1/2^{n-1})$. It follows from the preceding inequalities that
\begin{align*}
\Cu(\phi)[(g_e-t-1/2^{n-1})_+]&\le \Cu(\phi)[(g_e-(k+1)/2^n)_+]\\
&\le \alpha[(g_e-k/2^n)_+]\\
&\le \alpha[(g_e-t)_+],
\end{align*}
for $t\in [0,1]$. Interchanging the roles of $\Cu(\phi)$ and $\alpha$ (noting that they are symmetric) we also have
\begin{align*}
\alpha[(g_e-t-1/2^{n-1})_+]\le \Cu(\phi)[(g_e-t)_+],
\end{align*}
for $t\in [0,1]$. These inequalities can be restated as
\[
d_W(\Cu(\phi)\circ \Cu(\chi_{g_e})), \alpha\circ \Cu(\chi_{g_e}))\le 1/2^{n-1}.
\]
(Note that in the definition of the pseudometric $d_W$ it is enough to take $t$ in $[0,1]$.) Since the inequality above holds for all $e\in \E(X,v)$ we conclude that
\[
d_W^{(X,v)}(\Cu(\phi),\alpha)\le 1/2^{n-1}<\epsilon.
\]
\end{proof}

\section{Proof of Theorem \ref{homomorphism} and Corollary \ref{classification}}
\begin{lemma}\label{un_eq}
Let $A$ and $B$ be C*-algebras with $B$ of stable rank one. Let $\phi,\psi\colon A\to B$ be $\ast$-homomorphisms such that $\phi$ is approximately unitarily equivalent to $\psi$ with the unitaries taken in the unitization of $B\otimes \K$. Then $\phi$ is approximately unitarily equivalent to $\psi$ with the unitaries taken in the unitization of $B$.
\end{lemma}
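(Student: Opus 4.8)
The plan is to prove the following, which clearly suffices: for each finite self-adjoint $F\subseteq A$ (any finite set is contained in a self-adjoint one) and each $\epsilon>0$ there is a unitary $w\in\widetilde B$ with $\|w\phi(a)w^{*}-\psi(a)\|<\epsilon$ for all $a\in F$. Fix $F$ and $\epsilon$; choose $\epsilon',\delta'>0$ with $3\epsilon'+\delta'<\epsilon$, and use the hypothesis to obtain a unitary $u\in\widetilde{B\otimes\K}$ with $\|u\phi(a)u^{*}-\psi(a)\|<\epsilon'$ for $a\in F$, where $\phi(a),\psi(a)$ are viewed in $B\otimes\K$ via the corner embedding $b\mapsto b\otimes e_{11}$. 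Put $p:=1\otimes e_{11}\in\M(B\otimes\K)$, a projection with $p\phi(a)=\phi(a)p=\phi(a)$ and likewise for $\psi$. The first step is to compress $u$ to the corner: writing $u=\mu 1+m$ with $|\mu|=1$ and $m\in B\otimes\K$, the element $z:=pup=\mu p+pmp$ lies in the unital C*-subalgebra $\C p+p(B\otimes\K)p$ of $\M(B\otimes\K)$, which we identify with $\widetilde B$ ($p$ being the unit); moreover $\|z\|\le1$. This $z$ will, after a harmless unitarization, be the unitary we want.

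The second step is to record the estimates relating $z$ to $u$ on $F$. With $c:=(1-p)up$, the identity $(1-p)\psi(a)=0$ and $\|u\|=1$ give $\|c\phi(a)\|=\|(1-p)u\phi(a)\|\le\|(1-p)u\phi(a)u^{*}\|=\|(1-p)(u\phi(a)u^{*}-\psi(a))\|<\epsilon'$ for $a\in F$. From $u^{*}u=1$ one computes $z^{*}z=p-c^{*}c$, so $0\le z^{*}z\le p$; hence, writing $|z|:=(z^{*}z)^{1/2}$, operator monotonicity of the square root yields $z^{*}z\le|z|\le p$ and therefore $0\le p-|z|\le p-z^{*}z=c^{*}c$. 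Sandwiching between $0$ and $c^{*}c$ gives $\|(p-|z|)\phi(a)\|^{2}\le\|\phi(a)^{*}c^{*}c\phi(a)\|=\|c\phi(a)\|^{2}<(\epsilon')^{2}$, that is, $\||z|\phi(a)-\phi(a)\|<\epsilon'$ for $a\in F$; since $F=F^{*}$ the adjoint estimate holds too, and so $\||z|\phi(a)|z|-\phi(a)\|<2\epsilon'$ for $a\in F$. On the other hand $z\phi(a)z^{*}=pu\phi(a)u^{*}p$, whence $\|z\phi(a)z^{*}-\psi(a)\|<\epsilon'$ for $a\in F$.

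The third step is to pass from $z$ to a genuine unitary. Let $z=V|z|$ be the polar decomposition of $z$ in the bidual of $\widetilde B$, so that $V(|z|\phi(a)|z|)V^{*}=z\phi(a)z^{*}$ for all $a$. Since $B$ has stable rank one, the argument used in the proof of Theorem \ref{m-rel} (approximate $z$ by an invertible element of $\widetilde B$, take its polar decomposition, and estimate with Lemma \ref{ineq}; this invokes Proposition \ref{prelimprop}) produces a unitary $w\in\widetilde B$ with $\|VyV^{*}-wyw^{*}\|<\delta'$ for all $y$ in the finite set $\{|z|\phi(a)|z|:a\in F\}\subseteq\overline{|z|\widetilde B|z|}$. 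Combining this with the estimates of the second step by the triangle inequality gives, for $a\in F$,
\begin{align*}
\|w\phi(a)w^{*}-\psi(a)\|
&\le\bigl\||z|\phi(a)|z|-\phi(a)\bigr\|\\
&\quad+\bigl\|w|z|\phi(a)|z|w^{*}-V|z|\phi(a)|z|V^{*}\bigr\|\\
&\quad+\bigl\|z\phi(a)z^{*}-\psi(a)\bigr\|\\
&<2\epsilon'+\delta'+\epsilon'<\epsilon,
\end{align*}
which is exactly what was to be shown.

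The only delicate point is the third step, and it is where stable rank one is used. The naive idea that $u$ simply restricts to a unitary of the corner $\widetilde B$ fails, because the compression $z=pup$ need not be a unitary and in general is far from one in norm: its defect $p-z^{*}z=c^{*}c$ can have norm close to $1$, so no norm-small perturbation corrects $z$ to a unitary. The key is that $z^{*}z$ is nevertheless close to the unit on precisely the part of $\widetilde B$ detected by the $\phi(a)$, $a\in F$; this ``weighted'' closeness to a unitary is exactly the situation handled by the polar-decomposition/stable-rank-one technique behind Proposition \ref{prelimprop}. By contrast, the descent from $\widetilde{B\otimes\K}$ to the corner $\widetilde B$ is entirely routine, using only $p\phi(a)p=\phi(a)$ and the decomposition of $u$ into its scalar and $B\otimes\K$ parts.
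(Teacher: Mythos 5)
Your proof is correct. It follows the same overall skeleton as the paper's argument --- produce an element $z\in\widetilde B$ that approximately intertwines $\phi$ and $\psi$ on $F$ and whose modulus $|z|$ acts approximately as the identity on $\phi(F)$, then use stable rank one to replace the partial isometry in the polar decomposition of $z$ by a unitary --- but the construction of $z$ is genuinely different. The paper chooses a positive $a\in A$ acting as an approximate unit for $F$, sets $z=\psi(a)U\phi(a)$ (which automatically lands in the corner $B$), and derives the \emph{global} estimate $\||z|-\phi(a^2)\|<\sqrt{\epsilon}$ via Lemma \ref{ineq}; you instead compress $U$ by the multiplier projection $p=1\otimes e_{1,1}$ and exploit the exact identity $z^*z=p-c^*c$ together with $\|c\phi(a)\|<\epsilon'$ to obtain the \emph{local} estimate $\||z|\phi(a)-\phi(a)\|<\epsilon'$. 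Your route dispenses with the approximate unit in $A$ and its attendant bookkeeping, and yields linear error bounds ($3\epsilon'+\delta'$) rather than the paper's $5\sqrt{\epsilon}$; the cost is a brief excursion into the multiplier algebra, which is harmless. (Two points you gloss over are routine: $(p-|z|)^2\le p-|z|$ is needed before sandwiching by $c^*c$, and the reduction to self-adjoint $F$ is legitimate since any finite set sits inside a finite self-adjoint one.) The final step --- approximating $z$ by an invertible of $\widetilde B$ and passing to the unitary in its polar decomposition --- is the same in both proofs.
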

\begin{proof}
Let $F$ be a finite subset of $A$, and let $0<\epsilon<1$. Let us choose a positive element $a\in A$ such that $\|a\|<1$, and
\begin{align}\label{a_eq}
\|a^2f-f\|<\epsilon, \quad \|afa-f\|<\epsilon,
\end{align}
for all $f\in F$ ($a$ can be chosen to be a suitable element of an approximate unit of the C*-algebra generated by the elements of $F$). Since $\phi$ and $\psi$ are approximately unitarily equivalent with unitaries taken in $(B\otimes \K)\,\widetilde{}$, there exists a unitary $U\in (B\otimes \K)\,\widetilde{}$ such that
\begin{align}\label{af_eq}
\|U\phi(a^2)U^*-\psi(a^2)\|<\epsilon,\quad\|U\phi(f)U^*-\psi(f)\|<\epsilon,
\end{align}
for all $f\in F$. Set $\psi(a)U\phi(a)=z$. Then by the triangle inequality and the second inequality in \eqref{a_eq} and \eqref{af_eq} we have
\begin{align*}
&\|z\phi(f)z^*-\psi(f)\|=\|\psi(a)U\phi(afa)U^*\psi(a)-\psi(f)\|\le\\
&\le \|\psi(a)U(\phi(afa)-\phi(f))U^*\psi(a)\|+\|\psi(a)(U\phi(f)U^*-\psi(f))\psi(a)\|+\|\psi(afa)-\psi(f)\|\le\\
&\le \epsilon+\epsilon+\epsilon=3\epsilon,
\end{align*}
for all $f\in F$.
Also, using the first inequality in \eqref{af_eq} we obtain that
\begin{align*}
\|z^*z-\phi(a^4)\|=\|\phi(a)(U\psi(a^2)U^*-\phi(a^2))\phi(a)\|<\epsilon.
\end{align*}
It follows that $\|z\|<1$ and
\begin{align}\label{za_eq}
\|z\phi(f)z^*-\psi(f)\|<3\epsilon, \quad \||z|-\phi(a^2)\|<\sqrt{\epsilon},
\end{align}
for all $f\in F$, with Lemma \ref{ineq} being used to obtain the last inequality.
Since the C*-algebra $B$ has stable rank one we may assume that $z$ is an invertible element of $\widetilde B$.
Let us denote by $W$ the unitary in the polar decomposition of the element $z$. Then by the triangle inequality and the inequalities \eqref{a_eq} and \eqref{za_eq} we have
\begin{align*}
&\| W\phi(f)W^*-\psi(f)\|\le\|W(\phi(f)-\phi(a^2f))W^*\|+\|W\phi(a^2)\phi(f)W^*-z\phi(f)W^*\|+\\
&+\|z\phi(f)W^*-z\phi(fa^2)W^*\|+\|z\phi(f)\phi(a^2)W^*-z\phi(f)z^*\|+\|z\phi(f)z^*-\psi(f)\|<\\
& <\epsilon+\sqrt{\epsilon}+\epsilon+\sqrt{\epsilon}+\epsilon\le 5\sqrt{\epsilon},
\end{align*}
for all $f\in F$.

Since the finite subset $F$ and the positive number $\epsilon$ are arbitrary we conclude that $\phi$ and $\psi$ are approximately unitarily equivalent with unitaries taken in the unitization of $B$.
\end{proof}

Let $A$ and $B$ be C*-algebras such that $A$ has a strictly positive element---say $s_A$. Let us say that the ordered pair $(A,B)$ has the property (P) if for any Cuntz semigroup morphism $\alpha\colon \Cu(A)\to \Cu(B)$ such that $\alpha[s_A]\le [s_B]$, where $s_B$ is a positive element of $B$, there exists a $\ast$-homomorphism $\phi\colon A\to B$---unique up to approximate unitary equivalence---such that $\alpha=\Cu(\phi)$.

\begin{proposition}\label{prop(P)}
The following statements hold true:

(i) Let $B$ be a C*-algebra of stable rank one and let $(X,v)$ be a rooted tree. If the pair $(\mathrm{C}_0(X\setminus v),pBp)$ has the property (P) for every projection $p$ of $B$, then the pair $(\mathrm{C}(X),B)$ has the property (P).

(ii) If the pair of C*-algebras $(A,B)$ has the property (P) and $B$ has stable rank one, then the pair $(\M_n(A), B)$ has the property (P), for every $n\in \N$.

(iii) Let $C$ be a C*-algebra of stable rank one. If the pairs of C*-algebras $(A,D)$ and $(B,D)$ have the property (P) for all hereditary subalgebras $D$ of $C$, then the pair $(A\oplus B, C)$ has the property (P).

(iv) If the pairs of C*-algebras $(A_i, B)$ have the property (P) for a sequence
\begin{align*}
\xymatrix{
A_1\stackrel{\rho_1}\longrightarrow A_2\stackrel{\rho_2}\longrightarrow \cdots,
}
\end{align*}
then the pair $(\varinjlim (A_i,\rho_i), B)$ has the property (P).

(v) Let $A$, $B$, and $C$ be C*-algebras such that $A$ is stably isomorphic to $B$, and $C$ has stable rank one. If the pair $(A, C\otimes \K)$ has the property (P), then the pair $(B, C)$ has the property (P).
\end{proposition}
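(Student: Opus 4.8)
The plan for all five parts is the same: rewrite the hypothesis $\alpha[s_A]\le[s_B]$ as an instance covered by the assumed instance of property (P), build the $*$-homomorphism there, and then check that its Cuntz class equals $\alpha$ on the nose; the last step will repeatedly use the fact, already exploited in the proof of Proposition \ref{prelimprop}, that conjugating a positive element $y\le xx^{*}$ by the partial isometry in a polar decomposition $x=v|x|$ leaves its Cuntz (indeed Murray--von Neumann) class unchanged. For (i): since $1_X$ is a projection, $[1_X]$ is compact in $\Cu(\mathrm{C}(X))$, so $\alpha[1_X]$ is compact in $\Cu(B)$; as $B$ has stable rank one this is the class of a projection, and since $\alpha[1_X]\le[s_B]$ with $s_B\in B$, Proposition \ref{prelimprop} lets us choose a projection $p\in B$ with $\alpha[1_X]=[p]$. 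Every element of $\Cu(\mathrm{C}(X))$ lies below $\infty[1_X]$, so $\alpha$ takes values in $\Cu(pBp)\subseteq\Cu(B)$; now $pBp$ is unital of stable rank one and $\mathrm{C}(X)$ is the unitization of $\mathrm{C}_0(X\setminus v)$. Feeding the restriction of $\alpha$ along the ideal inclusion $\iota\colon\mathrm{C}_0(X\setminus v)\hookrightarrow\mathrm{C}(X)$ into property (P) for $(\mathrm{C}_0(X\setminus v),pBp)$ produces $\phi_0$, whose unitization is a $*$-homomorphism $\phi\colon\mathrm{C}(X)\to pBp\subseteq B$ with $\phi(1_X)=p$. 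That $\Cu(\phi)=\alpha$ follows because the two morphisms agree on $[1_X]$ and on the image of $\Cu(\iota)$, and --- arguing with cancellation in $\Cu(B)$ exactly as in Lemma \ref{lemma-metrics} and Proposition \ref{metrics}(i) --- this forces agreement throughout. Uniqueness: two such homomorphisms send $1_X$ to Murray--von Neumann equivalent projections, so after one conjugation they agree there, and then their restrictions to the ideal are approximately unitarily equivalent in $pBp$ by the uniqueness in property (P).

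\textbf{Parts (ii) and (iii).} For (ii), the corner inclusion $\iota\colon A\hookrightarrow\M_n(A)$ induces an isomorphism $\Cu(A)\cong\Cu(\M_n(A))$ carrying $n[s_A]$ to the class of a strictly positive element of $\M_n(A)$, so a morphism $\gamma$ with $\gamma[s_{\M_n(A)}]\le[s_B]$ gives $\alpha:=\gamma\circ\Cu(\iota)$ with $n\,\alpha[s_A]\le[s_B]$; property (P) for $(A,B)$ yields $\phi_0\colon A\to B$ with $\Cu(\phi_0)=\alpha$, and since $\mathrm{diag}(\phi_0(s_A),\dots,\phi_0(s_A))\preccurlyeq s_B$, Proposition \ref{prelimprop} makes this matrix Murray--von Neumann equivalent to an element of $\overline{s_BBs_B}$, so a conjugation identifies $\M_n(\overline{\phi_0(A)B\phi_0(A)})$ with a hereditary subalgebra of $B$; composing $\M_n(\phi_0)$ with this identification gives $\Phi\colon\M_n(A)\to B$, and invisibility of the conjugation to $\Cu$ gives $\Cu(\Phi)\circ\Cu(\iota)=\alpha$, hence $\Cu(\Phi)=\gamma$. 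For (iii), write $\gamma=(\gamma_A,\gamma_B)$ under $\Cu(A\oplus B)=\Cu(A)\oplus\Cu(B)$; then $\gamma_A[s_A]+\gamma_B[s_B]\le[s_C]$, and applying Proposition \ref{prelimprop} to an orthogonal pair realizing $\gamma_A[s_A]$ and $\gamma_B[s_B]$ produces orthogonal positive elements $c_A,c_B\in C$ with $[c_A]=\gamma_A[s_A]$ and $[c_B]=\gamma_B[s_B]$; orthogonality forces $\overline{c_ACc_A}$ and $\overline{c_BCc_B}$ to be orthogonal hereditary subalgebras, so applying property (P) for $(A,\overline{c_ACc_A})$ and $(B,\overline{c_BCc_B})$ and taking the direct sum of the outputs gives $\phi$ with $\Cu(\phi)=\gamma$. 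In both parts, uniqueness comes from the uniqueness clauses of the hypothesised instances of (P) plus the standard fact that approximate unitary equivalence of the restrictions to a generating subalgebra (the corner in (ii), the two summands in (iii)) upgrades to the whole map.

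\textbf{Parts (iv) and (v).} For (iv), set $\alpha_i:=\alpha\circ\Cu(\rho_{i,\infty})$; since $\Cu$ preserves sequential inductive limits and $\rho_{i,\infty}(s_{A_i})\preccurlyeq s_A$, property (P) for $(A_i,B)$ gives $\phi_i\colon A_i\to B$ with $\Cu(\phi_i)=\alpha_i$, and because $\phi_{i+1}\circ\rho_i$ and $\phi_i$ share the Cuntz class $\alpha_i$ they are approximately unitarily equivalent; an Elliott-type approximate intertwining then perturbs the $\phi_i$ by unitaries so that they converge on $\bigcup_i\rho_{i,\infty}(A_i)$ to $\phi\colon A\to B$ with $\phi\circ\rho_{i,\infty}$ approximately unitarily equivalent to $\phi_i$ for all $i$, whence $\Cu(\phi)=\alpha$; uniqueness is the analogous one-sided intertwining. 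For (v), fix an isomorphism $\Theta\colon A\otimes\K\to B\otimes\K$ and let $\nu:=\Cu(\Theta)\colon\Cu(A)\cong\Cu(B)$; since $s_B$ is strictly positive, $\nu[s_A]\le\infty[s_B]$, so $\beta(\nu[s_A])\le\infty[s_C]$, which is the class of a positive element of $C\otimes\K$, and property (P) for $(A,C\otimes\K)$ applied to $\beta\circ\nu$ yields $\phi\colon A\to C\otimes\K$ with $\Cu(\phi)=\beta\circ\nu$. Then $\psi_0:=((\phi\otimes\id_{\K})\circ\Theta^{-1})|_{B\otimes e_{11}}\colon B\to C\otimes\K$ has $\Cu(\psi_0)=\beta$, so $\psi_0(s_B)\preccurlyeq s_C$, and Proposition \ref{prelimprop} gives an element of $\overline{s_CCs_C}\subseteq C$ Murray--von Neumann equivalent to $\psi_0(s_B)$; conjugating $\psi_0$ by the corresponding partial isometry produces $\psi\colon B\to C$ with $\Cu(\psi)=\beta$. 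Uniqueness: two maps $B\to C$ with Cuntz class $\beta$ become, after tensoring with $\id_{\K}$, composing with $\Theta$, and restricting to the corner $A\otimes e_{11}$, two maps $A\to C\otimes\K$ with the same Cuntz class, hence approximately unitarily equivalent by the uniqueness in (P) for $(A,C\otimes\K)$; transporting this back (extending it from the corner to the stabilisation, then invoking Lemma \ref{un_eq} to bring the unitaries into the unitization of $C$) completes the argument.

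\textbf{Main obstacle.} The crux throughout is showing that the constructed homomorphism realises the prescribed Cuntz morphism exactly, not merely on a generating set or up to conjugation: in (i) this is the claim that agreement on $[1_X]$ and on the ideal pins down a $\CCu$-morphism on $\Cu(\mathrm{C}(X))$, which I would handle by the cancellation argument of Lemma \ref{lemma-metrics} and Proposition \ref{metrics}(i); in (ii), (iii) and (v) it rests on the lemma that conjugation by the partial isometry supplied by Proposition \ref{prelimprop} does not move Cuntz classes. The remaining delicate-but-routine points are the bookkeeping of strictly positive elements under stabilisation in (v), and the passage from approximate unitary equivalence on a generating subalgebra to the whole algebra in the uniqueness halves of (ii)--(v), where one leans on the stable rank one hypothesis on the target together with Lemma \ref{un_eq}.
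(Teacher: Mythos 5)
Your proposal is correct and follows essentially the same route as the paper's proof: realize the prescribed classes inside corners or hereditary subalgebras via Proposition \ref{prelimprop} and polar decompositions, invoke the hypothesised instance of property (P) there, transport back, and handle uniqueness by the Lemma \ref{un_eq}-type corner argument together with (in (iv)) an approximate intertwining. The only variations are cosmetic --- in (v) you apply (P) for $(A, C\otimes\K)$ directly to $\beta\circ\Cu(\Theta)$ rather than first bootstrapping to $(B\otimes\K, C\otimes\K)$ via (ii) and (iv) as the paper does --- though be aware that the ``standard fact'' upgrading approximate unitary equivalence from a full corner to the whole matrix algebra is exactly the non-automatic computation the paper writes out in the uniqueness half of (ii) (it genuinely uses stable rank one of the target), so it deserves that argument rather than a bare assertion.
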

\begin{proof}
(i) Let $\alpha\colon \Cu(\mathrm{C}(X)) \to \Cu(B)$ be a morphism in the Cuntz category satisfying $\alpha([1_X])\le [s_B]$. Let us show that it is induced by a $\ast$-homomorphism $\varphi\colon \mathrm{C}(X)\to B$. As $B$ has stable rank one, the element $\alpha([1_X])$ appears as the Cuntz class of a projection of $B$, say $p$. We have the following identifications:
\begin{align*}
\Cu(pBp)&\cong\{x\in \Cu(B)\mid x\le \infty [p]\},\\
\Cu(\mathrm{C}_0(X\setminus v))&\cong \{x\in \Cu(\mathrm{C}(X))\mid x\le \infty [s_X]\},
\end{align*}
where $s_X$ is a strictly positive element of $\mathrm{C}(X, v)$. It follows that $\alpha[s_X]\le [p]$, and
\[
\alpha(\Cu(\mathrm{C}_0(X\setminus v)))\subseteq \Cu(pBp).
\]
By assumption the pair $(\Cu(\mathrm{C}_0(X\setminus v)), pBp)$ has the property (P). Therefore, there is a $\ast$-homomorphism $\phi\colon \mathrm{C}_0(X\setminus v)\to pBp$ such that $\Cu(\phi)$ is equal to the restriction of $\alpha$ to $\Cu(\mathrm{C}_0(X\setminus v))$.
Let us consider the restriction of $\alpha$ to $\Cu(\mathrm{C}_0(X\setminus v))$. Denote by $\tilde\phi\colon \mathrm{C}(X)\to B$ the $\ast$-homomorphism which agrees with $\phi$ on $\mathrm{C}_0(X\setminus v)$ and satisfies $\tilde\phi(1)=p$. Then $\Cu(\tilde\phi)=\alpha$. (The proof of this fact is similar to the proof of (i) of Proposition \ref{metrics}.)

Now let us show that if $\phi, \psi\colon \mathrm{C}(X)\to B$ are $\ast$-homomorphisms such that $\Cu(\phi)=\Cu(\psi)$, then they are approximately unitarily equivalent. Since $\Cu(\phi)[1_X]=\Cu(\psi)[1_X]$  and $B$ has stable rank one we may assume that $\phi(1_X)=\psi(1_X)=p$, where $p\in B$ is a projection. Let us denote by $\phi', \psi'\colon \mathrm{C}_0(X\setminus v)\to  pBp\subseteq B$ the restrictions of the $\ast$-homomorphisms $\phi$ and $\psi$ to $\mathrm{C}_0(X\setminus v)$. Since $\Cu(\phi)=\Cu(\psi)$, we have $d_W^{(X,v)}(\Cu(\phi'), \Cu(\psi))=0$. Using the relation between $d_U^{(X,v)}$ and $d_W^{(X,v)}$ established in Theorem \ref{pseu-rel}, we conclude that $d_{U}^{(X,v)}(\phi', \psi')=0$ (the unitaries taken in $pBp$). It follows now that $\phi$ and $\psi$ are approximately unitarily equivalent, as desired.

(ii) Let us start by showing that any Cuntz semigroup morphism $\alpha\colon \Cu(\M_n(A))\to B$, satisfying $\alpha[s_A\otimes 1_n]\le [s_B]$, where $1_n$ denotes the unit of $\M_n(\C)$, can be lifted to a  $\ast$-homomorphism $\phi\colon \M_n(A)\to B$ such that $\Cu(\phi)=\alpha$.

Let us consider the inclusion map $i_A\colon A\to \M_n(A)$ given by $i_A(a)=a\otimes e_{1,1}$. The map $\Cu(i_A)$ is an isomorphism. We have
\begin{align*}
\xymatrix{
\Cu(A)\ar[r]^(0.43){\Cu(i_A)}&\Cu(\M_n(A))\ar[r]^(0.55){\alpha}&\Cu(B).
}
\end{align*}
Since the pair $(A,B)$ has the property (P), there exists a $\ast$-homomorphism $\psi\colon A\to B$ such that $\alpha \circ \Cu(i_A)=\Cu(\psi)$. It follows that $\alpha=\Cu(\psi)\circ \Cu(i_A)^{-1}$.

Using the commutativity of the diagram
\[
\xymatrix{
        A\ar[r]^{\psi}\ar[d]^{i_A} & B\ar[d]^{i_B}\\
        \M_n(A)\ar[r]^{\psi \otimes \id}& \M_n(B)
        }
\]
we obtain that $\Cu(\psi \otimes \id)=\Cu(i_B)\circ \Cu(\psi)\circ \Cu(i_A)^{-1}$, and hence $\Cu(\psi \otimes
\id)=\Cu(i_B)\circ \alpha$. Therefore,
\begin{align*}
[(\psi \otimes \id)(s_A\otimes 1_n)]=\Cu(i_B)\circ \alpha [s_A\otimes 1_n]\le \Cu(i_B)[s_B]=[s_B\otimes e_{1,1}].
\end{align*}
Since the C*-algebra $A$ has stable rank one, $\M_n(A)$ has stable rank one. It follows by (ii) of Proposition \ref{prelimprop} that there is an element $x\in \M_n(B)$ such that
\begin{align*}
(\psi \otimes \id)(s_A\otimes 1_n)=x^*x,\quad xx^*\in \overline{(s_B\otimes e_{1,1})\M_n(B)(s_B\otimes e_{1,1})}.
\end{align*}
Let $x=V|x|$ be the polar decomposition of the element $x$ in the bidual of $\M_n(B)$. Then $\Ad(V^*)\circ (\psi \otimes \id)$ is a $\ast$-homomorphism with image contained in the hereditary subalgebra $\overline{(s_B\otimes e_{1,1})\M_n(B)(s_B\otimes e_{1,1})}$. Also, $\Cu(\Ad(V^*)\circ (\psi \otimes \id))=\Cu(\psi \otimes \id)$. Denote by
\[
\gamma\colon \overline{(s_B\otimes e_{1,1})\M_n(B)(s_B\otimes e_{1,1})}\to B
\]
the isomorphism defined by $\gamma(b\otimes e_{1,1})=b$ for all $b\in B$. Then $\Cu(\gamma)=\Cu(i_B)^{-1}$, since $\Cu(B\otimes{e_{1,1}})=\Cu(B\otimes \K)$. Set $\gamma\circ \Ad(V^*)\circ (\psi\otimes \id)=\phi$. It follows that $\Cu(\phi)=\alpha$.

Let $\phi,\psi\colon \M_n(A)\to B$ be $\ast$-homomorphisms such that $\Cu(\phi)=\Cu(\psi)$. Let us show that $\phi$ and $\psi$ are approximately unitarily equivalent. Let us consider a finite subset of $\M_n(A)$ of the form
\begin{align*}
\{a\otimes e_{k,l}\mid 1\le k,l\le n, a\in F, \|a\|<1\},
\end{align*}
where $F$ is a finite subset of $A$. Let $0<\epsilon<1$ and let $b\in A$ be a positive contraction such that
\begin{align}\label{b}
\|bab-a\|<\epsilon,\quad \|b^2a-a\|<\epsilon,
\end{align}
for all $a\in F$. Since the pair $(A, B)$ has the property (P) and $\Cu(\phi\circ i_A)=\Cu(\phi\circ i_A)$, where $i_A\colon A\to \M_n(A)$ is the inclusion map, there is a unitary $U\in \widetilde B$ such that
\begin{align}\label{Ue11}
\|U\phi(a\otimes e_{1,1})U^*-\psi(a\otimes e_{1,1})\|<\epsilon,
\end{align}
for all $a\in F\cup \{b^2\}$. Let us set
\begin{align*}
\sum_{i=1}^n\psi(b\otimes e_{i,1})U\phi(b\otimes e_{1,i})=z.
\end{align*}
Using the inequalities \eqref{b} and \eqref{Ue11} we get for all $a\in F$ that
\begin{align*}
&\|z\phi(a\otimes e_{k,l})z^*-\psi(a\otimes e_{k,l})\|=\|\psi(b\otimes e_{k,1})U\phi(bab\otimes e_{1,1})U^*\psi(b\otimes e_{1,l})-\psi(a\otimes e_{k,l})\|\le\\
&\le\|\psi(b\otimes e_{k,1})(U\phi(bab\otimes e_{1,1})U^*-\psi(a\otimes e_{1,1}))\psi(b\otimes e_{1,l})\|+\|\psi(bab\otimes e_{k,l})-\psi(a\otimes e_{k,l})\|\\
&<2\epsilon +\epsilon=3\epsilon.
\end{align*}

Also, using the inequality \eqref{Ue11} we have
\begin{align*}
\|z^*z-\phi(b^4\otimes 1_n)\|=\|\sum_{i=1}^n\phi(b\otimes e_{i,1})(U^*\psi(b^2\otimes e_{1,1})U-\phi(b^2\otimes e_{1,1}))\phi(b\otimes e_{1,i})\|<\epsilon.
\end{align*}
Hence, we have found an element $z\in B$ such that
\begin{align}\label{z}
&\||z|-\phi(b^2\otimes 1_n)\|<\sqrt{\epsilon},\\
&\|z\phi(a\otimes e_{k,l})z^*-\psi(a\otimes e_{k,l})\|< 3\epsilon,\label{ze}
\end{align}
for all $a\in F$ and $k,l=1,2,\cdots, n$.
Since $B$ has stable rank one we can approximate the element $z$ by an invertible element $z'\in \widetilde B$ in such a way that inequalities \eqref{z} and \eqref{ze} still hold when $z$ is replaced by $z'$. Denote by $W$ the unitary in the polar decomposition of the invertible element $z'$. Then,
\begin{align*}
&\|W\phi(a\otimes e_{k,l})W^*-\psi(a\otimes e_{k,l})\|=\|W(\phi(a\otimes e_{k,l})-\phi(b^2a\otimes e_{k,l}))W^*\|+\\
&+\|W(\phi(b^2\otimes e_{k,l})-|z'|)\phi(a\otimes e_{k,l})W^*\|+\| W|z'|(\phi(a\otimes e_{k,l})-\phi(ab^2\otimes e_{k,l}))W^*\|+\\
&+\| W|z'|\phi(a\otimes e_{k,l})(\phi(b^2\otimes e_{k,l})-|z'|)W^*\|+\| z'\phi(a\otimes e_{k,l})(z')^*-\psi(a\otimes e_{k,l})\|\\
&< \epsilon+\sqrt{\epsilon}+2\epsilon+2\sqrt{\epsilon}+3\epsilon<9\sqrt{\epsilon},
\end{align*}
for all $a\in F$ and $k,l=1,2,\cdots,n$. Therefore, the unitary $W\in \widetilde B$ satisfies
\[
\|W\phi(a\otimes e_{k,l})W^*-\psi(a\otimes e_{k,l})\|<9\sqrt{\epsilon},
\]
for all $a\in F$ and $k,l=1,2,\cdots,n$. This proves that the $\ast$-homomorphisms $\phi$ and $\psi$ are approximately unitarily equivalent.

(iii) Let $\alpha\colon \Cu(A\oplus B)\to \Cu(C)$ be such that $\alpha[s_A\oplus s_B]\le [s_C]$, where $s_A$, $s_B$ are strictly positive elements of $A$ and $B$, and $s_C$ is a positive element of $C$. Let us consider the positive elements $a,b\in C\otimes \K$ such that $\alpha[s_A\oplus 0]=[a]$ and $\alpha[0\oplus s_B]=[b]$. By the stability of $C\otimes\K$ we may assume that $a$ and $b$ are orthogonal. We have
\[
\alpha[s_A\oplus s_B]=[a]+[b]=[a+b]\le [s_C].
\]
Applying (ii) of Proposition \ref{prelimprop} to the C*-algebra $C$ and the positive elements $a+b$ and $s_C$, we can find an element $x\in C\otimes \K$ such that
\[
a+b=x^*x,\quad xx^*\in C.
\]
Consider the polar decomposition $x=V|x|$ of $x$ in the bidual of $C\otimes \K$. Set $VaV^*=a'$ and $VbV^*=b'$. Then $a'$ and $b'$ are orthogonal elements of  $C^+$ satisfying $[a]=[a']$ and $[b]=[b']$. Also, we have the following natural isomorphisms:
\begin{align*}
\Cu(\overline{a'Aa'})\cong \{[z]\in \Cu(C)\mid [z]\le \infty[a']\},\\
\Cu(\overline{b'Ab'})\cong \{[z]\in \Cu(C)\mid [z]\le \infty[b']\}.
\end{align*}
Using these identifications and the fact that $\Cu(A\oplus B)$ is naturally isomorphic to $\Cu(A)\oplus \Cu(B)$, we can identify the morphism $\alpha$ with a pair of Cuntz semigroup morphisms $(\alpha_1,\alpha_2)$,
\begin{align*}
\alpha_1\colon \Cu(A)\to \Cu(\overline{a'Aa'}),\quad \alpha_1[s_A]\le [a_1],\\
\alpha_2\colon \Cu(B)\to \Cu(\overline{b'Ab'}),\quad \alpha_1[s_B]\le [b_2].
\end{align*}
Since by hypothesis the pairs $(A,\overline{a'Aa'})$, and $(B,\overline{b'Ab'})$ have the property (P) we can find $\ast$-homomorphisms $\phi_1\colon A\to \overline{a'Aa'}$, $\phi_2\colon A\to \overline{b'Ab'}$ such that $\Cu(\phi_1)=\alpha_1$ and $\Cu(\phi_2)=\alpha_2$. It follows that the $\ast$-homomorphism $\phi=(\phi_1, \phi_2)\colon A\oplus B\to C$ induces the morphism $\alpha$ at the level of Cuntz semigroups.

Let $\phi,\psi\colon A\oplus B\to C$ be $\ast$-homomorphisms such that $\Cu(\phi)=\Cu(\psi)$. Let us show that $\phi$ and $\psi$ are approximately unitarily equivalent.

Since $\Cu(\phi)=\Cu(\psi)$ we have $\phi(s_A)\sim \psi(s_A)$, and $\phi(s_B)\sim \psi(s_B)$ (we are using the identifications $A\oplus 0\cong A$ and $0\oplus B\cong B$). Again by (ii) of Proposition \ref{prelimprop} we can find elements $x_1,x_2\in C$ such that
\begin{align*}
&\phi(s_A)=x_1^*x_1,\quad x_1x_1^*\in \overline{\psi(s_A)C\psi(s_A)},\\
&\phi(s_B)=x_2^*x_2,\quad x_2x_2^*\in \overline{\psi(s_B)C\psi(s_B)}.
\end{align*}
Set $x_1+x_2=x$. Consider the partial isometry $V$ in the polar decomposition of $x$ in the bidual of $C$. Since the elements $x_1$ and $x_2$ satisfy the orthogonality relations $x_1^*x_2=x_2x_1^*=0$, we have
\begin{align*}
x_1=V|x_1|,\quad x_2=V|x_2|.
\end{align*}
It follows from these identities that the map $\Ad(V^*)\circ \phi$ is a $\ast$-homomorphism. Also, we have $\Cu(\Ad(V^*)\circ \phi)=\Cu(\phi)$. Let us denote by $\phi_A$ and $\phi_B$ the restrictions of the $\ast$-homomorphism $\Ad(V^*)\circ \phi$ to the C*-algebras $A$ and $B$, respectively. Then
\[
\phi_A(A)\subseteq \overline{\psi(s_A)C\psi(s_A)},\quad \phi_B(B)\subseteq \overline{\psi(s_B)C\psi(s_B)}.
\]
Since by hypothesis the pair of C*-algebras $(A, \overline{\psi(s_A)C\psi(s_A)})$ has the property (P), the $\ast$-homomorphism $\phi_A$ is approximately unitarily equivalent---with the unitaries taken in the unitization of the C*-algebra $\overline{\psi(s_A)C\psi(s_A)}$---to the restriction of the *-homomorphism $\psi$ to the C*-algebra $A$. In similar way the $\ast$-homomorphism $\phi_B$ and the restriction of the $\ast$-homomorphism $\psi$ to the C*-algebra $B$ are approximately unitarily equivalent in the unitization of the C*-algebra $\overline{\psi(s_B)C\psi(s_B)}$. It follows that the $\ast$-homomorphisms $\Ad(V^*)\circ \phi$ and $\psi$ are approximately unitarily equivalent in the unitization of the C*-algebra $C$.
In order to complete the proof let us show that the $\ast$-homomorphisms $\Ad(V^*)\circ \phi$ and $\phi$ are approximately unitarily equivalent. Recall that $V$ is the partial isometry in the polar decomposition of the element $x=x_1+x_2$. It follows by Proposition \ref{prelimprop} applied to the C*-algebra $C$ and the element $x$ that for every $\epsilon>0$ and every finite subset $F$ of the hereditary algebra $\overline{x^*Cx}$ there is a unitary $U\in C^\sim$ such that
\[
\|VzV^*-UzU^*\|<\epsilon,
\]
for all $z\in F$. This implies that $\Ad(V^*)\circ \phi$ and $\phi$ are approximately unitarily equivalent, since $\phi(A\oplus B)\subseteq \overline{x^*Cx}$.

(iv) Let $A=\varinjlim (A_i,\rho_i)$. For each $1\le i<j$ let $\rho_{i,j}\colon A_i\to A_j$ denote the $\ast$-homomorphism $\rho_{j-1}\circ\rho_{j-2}\circ\cdots\circ\rho_i$. Also, for each $1\le i$ let $\rho_{i,\infty}\colon A_i\to A$ denote the $\ast$-homomorphism given by the inductive limit.

Let $\alpha\colon A\to B$ be a Cuntz semigroup morphism such that $\alpha[s_A]\le [s_B]$, where $s_A$ is a strictly positive elements of $A$, and $s_B$ is a positive element of $B$. For each $i\ge 1$ set $\alpha\circ \Cu(\rho_{i,\infty})=\alpha_i$. We have
\[
\alpha_i[s_{A_i}]=\alpha[\rho_{i,\infty}(s_{A_i})]\le \alpha[s_A]\le [s_B].
\]
where $s_{A_i}$ denotes a strictly positive element of $A_i$. Since the pairs $(A_i, B)$ have the property (P) for all $i\ge 1$, there exist $\ast$-homomorphisms $\phi_i\colon A_i\to B$, such that $\Cu(\phi_i)=\alpha_i$. For each $i$ we have $\Cu(\phi_i)=\Cu(\phi_{i+1}\circ \rho_i)$. Hence the $\ast$-homomorphisms $\phi_i$ and $\phi_{i+1}\circ \rho_i$ are approximately unitarily equivalent. By Subsection 2.3 of \cite{elliott-AT} there exists a $\ast$-homomorphism $\phi\colon A\to B$ such that for every $i\ge 1$ the $\ast$-homomorphisms $\phi\circ \rho_{i,\infty}$ and $\phi_i$ are approximately unitarily equivalent. Since the Cuntz functor is equal in $\ast$-homomorphisms that are approximately unitarily equivalent we have
$\Cu(\phi\circ \rho_{i,\infty})=\Cu(\phi_i)$. Therefore, $\Cu(\phi)\circ \Cu(\rho_{i,\infty})=\alpha_i$ for all $i\ge 1$. By the universal property of the inductive limit this implies that $\alpha=\Cu(\phi)$.

To conclude the proof of (iv) let us show that if homomorphisms $\phi,\psi\colon A\to B$ are such that $\Cu(\phi)=\Cu(\psi)$, then they are approximately unitarily equivalent. For each $i\ge 1$ set $\phi\circ \rho_{i,\infty}=\phi_i$, and $\psi\circ \rho_{i,\infty}=\psi_i$. We have $\Cu(\phi_i)=\Cu(\psi_i)$ for $i\ge 1$. Since for each $i\ge 1$ the pair $(A_i, B)$ has the property (P) the $\ast$-homomorphisms $\phi_i$ and $\psi_i$ are approximately unitarily equivalent. It follows that the $\ast$-homomorphisms $\phi$ and $\psi$ are approximately unitarily equivalent.

(v) The pair $(A, C\otimes \K)$ has the property (P) by assumption, and it follows by Lemma \ref{un_eq} that the pair $(A,C)$ also has the property (P). It follows by (ii) and (iv) that also the pairs $(A\otimes \K, C)$ and $(A\otimes\K, C\otimes \K)$ have the property (P). Since $A\otimes \K\cong B\otimes \K$, the pairs $(B\otimes \K, C)$ and $(B\otimes\K, C\otimes \K)$ have the property (P).

Let $\alpha\colon \Cu(B)\to \Cu(C)$ be a Cuntz semigroup morphism such that $\alpha[s_B]\le [s_C]$, where $s_B$ is a strictly positive element of $B$, and $s_C$ is a positive element of $C$. Let $i_B\colon B\to B\otimes \K$ and $i_C\colon C\to C\otimes \K$ denote the inclusion maps $i_B(b)=b\otimes e_{1,1}$ and $i_C(c)=c\otimes e_{1,1}$. Then $\Cu(i_B)$ and $\Cu(i_C)$ are isomorphisms. Since the pair
$(B\otimes\K, C\otimes \K)$ has the property (P), there exists a $\ast$-homomorphism $\phi\colon B\otimes\K\to C\otimes\K$ such that $\Cu(\phi)=\Cu(i_C)\circ\alpha \circ\Cu(i_B)^{-1}$. We have
\[
\Cu(\phi\circ i_B)[s_B]=(\Cu(i_C)\circ\alpha)[s_B]\le [s_C\otimes e_{1,1}].
\]
By Proposition \ref{prelimprop} there exists $x\in C\otimes \K$ such that
\begin{align*}
(\phi\circ i_B)(s_B)=x^*x, \quad xx^*\in\overline{(s_C\otimes e_{1,1})(C\otimes \K)(s_C\otimes e_{1,1})}.
\end{align*}
Consider the polar decomposition $x=V|x|$ of $x$ in the bidual of $C\otimes \K$. Then
\[
(\Ad(V^*)\circ\phi\circ i_B)(B)\subseteq \overline{(s_C\otimes e_{1,1})(C\otimes \K)(s_C\otimes e_{1,1})}=C\otimes e_{1,1},
\]
and $\Cu(\Ad(V^*)\circ\phi\circ i_B)=\Cu(\phi\circ i_B)$. Denote by $\gamma\colon C\otimes e_{1,1}\to C$ the $\ast$-isomorphism $\gamma(c\otimes e_{1,1})=c$ for all $c\in C$. Then $\Cu(\gamma)=\Cu(i_C)^{-1}$, since $\Cu(C\otimes{e_{1,1}})=\Cu(C\otimes \K)$. Set $\gamma\circ\Ad(V^*)\circ\phi\circ i_B=\phi'$. It follows that $\phi'\colon B\to C$, and
\[
\Cu(\phi')=\Cu(\gamma\circ\Ad(V^*)\circ\phi\circ i_B)=\Cu(i_C)^{-1}\circ \Cu(\phi)\circ\Cu(i_B)=\alpha.
\]

Now let us show that if $\phi,\psi\colon B\to C$ are $\ast$-homomorphisms such that $\Cu(\phi)=\Cu(\psi)$, then they are approximately unitarily equivalent, with the unitaries taken in the unitization of $C$.

Since $\Cu(\phi)=\Cu(\psi)$ it follows that $\Cu(\phi\otimes \id)=\Cu(\psi\otimes \id)$, where $\id\colon \K\to \K$ is the identity map. This implies that $\phi\otimes \id$ is approximately unitarily equivalent to $\psi\otimes \id$, with the unitaries taken in $(C\otimes \K)\,\widetilde{}$. Hence, $(\psi\otimes \id)\circ i_B$ and $(\psi\otimes \id)\circ i_B$ are approximately unitarily equivalent. Since the images of both maps are contained in $C$, $\phi$ and $\psi$ are approximately unitarily equivalent with the unitaries taken in the unitization of $C$, by Lemma \ref{un_eq}.

\end{proof}

\begin{proof}[Proof of Theorem \ref{homomorphism}] By the statements (i), (iii), (iv), and (v) of Proposition \ref{prop(P)}, and by Proposition \ref{c-t} it is enough to prove the theorem in the case $A=\mathrm{C}_0(X\setminus v)$, where $(X,v)$ is a rooted tree.

The uniqueness of the homomorphism $\phi$ is a special case of Theorem \ref{m-rel}. Let us prove its existence.
By Theorem \ref{a-l}, for every $n$ there exists $\phi_n\colon \mathrm{C}_0(X\setminus v)\to B$ such that
$d_W^{(X,v)}(\mathrm{Cu}(\phi_n),\alpha)<1/2^{n+2}$. It follows by Theorem \ref{m-rel} that
\begin{align*}
d_U^{(X,v)}(\phi_n, \phi_{n+1})\le (2N+2)d_W^{(X,v)}(\Cu(\phi_n), \Cu(\phi_{n+1}))<\frac{1}{2^n}(2N+2).
\end{align*}
This implies that $(\phi_n)_n$ is a Cauchy sequence with respect to the pseudometric $d_U^{(X,v)}$. By (ii) of Proposition \ref{metrics},  the space $\Hom (\mathrm{C}_0(X\setminus v), B)$ is complete with respect to $d_U^{(X,v)}$. Hence, there exists $\phi\colon \mathrm{C}_0(X\setminus v)\to B$ such that $d_U^{(X,v)}(\phi, \phi_n)\to 0$. By the first inequality of Theorem \ref{pseu-rel},
\begin{align*}
d_W^{(X,v)}(\Cu(\phi),\alpha) &\le d_W^{(X,v)}(\Cu(\phi), \Cu(\phi_n))+d_W^{(X,v)}(\Cu(\phi_n), \alpha),\\
& \le d_U^{(X,v)}(\phi, \phi_n)+d_W^{(X,v)}(\Cu(\phi_n), \alpha)\to 0.
\end{align*}
In other words, $d_W^{(X,v)}(\Cu(\phi),\alpha)=0$. Since, as shown in (i) of Proposition \ref{metrics}, $d_W^{(X,v)}$ is a metric, we have $\Cu(\phi)=\alpha$, as desired.
\end{proof}

\begin{proof}[Proof of Corollary \ref{classification}]
Let $A$ and $B$ be sequential inductive limits of separable continuous-trace C*-algebras with spectrum homeomorphic to a disjoint union of trees and trees with a point removed. Let $\alpha\colon \Cu(A)\to \Cu(B)$ be a Cuntz semigroup isomorphism such that $\alpha[s_A]=[s_B]$. Then by Theorem \ref{homomorphism}, there are $\ast$-homomorphisms $\phi\colon A\to B$ and $\psi\colon B\to A$ such that $\Cu(\phi)=\alpha$, and $\Cu(\psi)=\alpha^{-1}$. We have $\Cu(\phi\circ\psi)=\Cu(\id_A)$ and $\Cu(\psi\circ\phi)=\Cu(\id_B)$, where $\id_A$ and $\id_B$ denote the identity endomorphisms of $A$ and $B$. Therefore by the uniqueness part of Theorem \ref{homomorphism}, $\phi\circ\psi$ and $\id_A$, and $\psi\circ\phi$ and $\id_B$ are approximately unitarily equivalent. Hence, by Theorem 3 of \cite{elliott-cla} (cf. Section 4.3 of \cite{elliott-cla}), there exists an isomorphism $\rho\colon A\to B$ approximately unitarily equivalent to $\phi$, and in particular by definition of $\Cu(B)$ such that $\Cu(\rho)=\Cu(\phi)=\alpha$. The $\ast$-homomorphism $\rho$ is unique---up to approximate unitary equivalence---by Theorem \ref{homomorphism}.
\end{proof}

\subsection{Remarks}
It follows from (v) of Proposition \ref{prop(P)} that Theorem \ref{homomorphism} still holds if $A$ is taken to be a full hereditary subalgebra of a C*-algebra that can be written as a sequential inductive limit of separable continuous-trace C*-algebras with spectrum homeomorphic to a disjoint union of trees and trees with a point removed. The same applies to Corollary \ref{classification} for the C*-algebras $A$ and $B$. We don't know if in general every such full hereditary subalgebra can be written as a sequential inductive limit of continuous-trace C*-algebras with spectrum homeomorphic to a disjoint union of trees and trees with a point removed. We have the following partial result:

\begin{proposition} Let $A$ be a full hereditary subalgebra of a C*-algebra that can be written as a sequential inductive limit of continuous-trace C*-algebras with spectrum homeomorphic to a compact tree. Then $A$ can be written as a sequential inductive limit of continuous-trace C*-algebras with spectrum homeomorphic to a compact tree.
\end{proposition}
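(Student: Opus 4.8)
The plan is to reduce the assertion to a statement about the Cuntz semigroup and then to realise the required invariant by an explicit inductive limit. Write $B=\varinjlim(B_i,\rho_i)$, where each $B_i$ is a separable continuous-trace C*-algebra with spectrum a compact tree $X_i$, and let $A$ be a full hereditary subalgebra of $B$ with strictly positive element $s_A$. By the remarks preceding this proposition, Theorem \ref{homomorphism} and Corollary \ref{classification} remain valid when $A$ (and, in the corollary, $B$) is a full hereditary subalgebra of an inductive limit of the building blocks under consideration. Consequently, it will be enough to produce \emph{some} C*-algebra $D$ which is a sequential inductive limit of continuous-trace C*-algebras with spectrum a finite disjoint union of compact trees, together with an isomorphism $\Cu(D)\to\Cu(A)$ carrying the class of a strictly positive element of $D$ to $[s_A]$; the extended Corollary \ref{classification}, applied to $A$ and to $D$ (the latter being trivially a full hereditary subalgebra of itself), then yields $A\cong D$, and we are done.

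First I would pass to the stable picture. Since $A$ is full in $B$ and both are separable, $A\otimes\K\cong B\otimes\K=\varinjlim(B_i\otimes\K,\rho_i\otimes\id)$, and by Proposition \ref{c-t} each $B_i\otimes\K$ is isomorphic to $\mathrm{C}(X_i)\otimes\K$. Hence $\Cu(A)\cong\Cu(B)\cong\varinjlim\Cu(\mathrm{C}(X_i))$, and by Theorem 1 of \cite{leonel} each $\Cu(\mathrm{C}(X_i))$ may be identified with $\mathrm{Lsc}(X_i,\N\cup\{\infty\})$. Using that $\Cu$ preserves sequential inductive limits and that each element of a $\CCu$-semigroup is the supremum of a rapidly increasing sequence, I would then write $[s_A]=\sup_m x_m$ with $x_m\ll x_{m+1}$ and each $x_m$ coming from a finite stage, say $x_m=\Cu(\rho_{i_m,\infty})(y_m)$ with $y_m\in\mathrm{Lsc}(X_{i_m},\N\cup\{\infty\})$ and, after shrinking the $y_m$, with $\Cu(\rho_{i_m,i_{m+1}})(y_m)\ll y_{m+1}$.

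The heart of the proof is to convert this data into a genuine inductive system of continuous-trace C*-algebras with compact tree spectra. The obstacle is that the obvious candidates for the building blocks---hereditary subalgebras of the $B_i$, or continuous-trace algebras whose spectrum is the support of a representative of $y_m$---have spectra equal to \emph{open} subsets of the $X_i$, hence generically non-compact trees with points removed, and one cannot enlarge the support without changing the strictly positive class. The remedy I would use is to take $D_m=\bigoplus_j\M_{k_{m,j}}(\mathrm{C}(Y_{m,j}))$, where the $Y_{m,j}$ are the closures in $X_{i_m}$ of the connected components of the open sets $\{x:y_m(x)>l\}$, $l\geq 0$ (each a compact subtree), with $k_{m,j}$ the corresponding value of $y_m$ (the summand being $\mathrm{C}(Y_{m,j})\otimes\K$ where $y_m=\infty$); and then to construct the connecting $\ast$-homomorphism $\sigma_m\colon D_m\to D_{m+1}$ so that it "collars" each summand of $D_m$ of a given multiplicity into the boundary collars of the summands of $D_{m+1}$ of that multiplicity. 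This is possible precisely because such a $\ast$-homomorphism is allowed to be non-injective and to drop fibre dimension, and it is this freedom that lets the merely open supports be replaced by compact subtrees. One then checks, using the functoriality and continuity of $\Cu$, that $D:=\varinjlim(D_m,\sigma_m)$ satisfies $\Cu(D)\cong\varinjlim\mathrm{Lsc}(X_{i_m},\N\cup\{\infty\})\cong\Cu(A)$ and that under this identification the class of a strictly positive element of $D$, namely $\sup_m\Cu(\sigma_{m,\infty})[s_{D_m}]$, corresponds to $\sup_m x_m=[s_A]$.

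The hard part will be exactly this last construction: arranging the connecting maps $\sigma_m$ so that the prescribed transitions $y_m\mapsto y_{m+1}$ are realised (up to the required compact containments) while every spectrum stays a disjoint union of compact trees, and verifying that the resulting family of $\Cu$-morphisms assembles to the given Cuntz semigroup map. Everything else is routine once the reduction via the extended Corollary \ref{classification} has been carried out, since that reduction is what guarantees that only the pointed Cuntz semigroup, and not the particular inductive system, needs to be matched.
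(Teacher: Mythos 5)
Your reduction via the extended Corollary \ref{classification} is legitimate in principle: since the remarks show that the classification applies to full hereditary subalgebras of the limits in question, it would indeed suffice to exhibit an inductive limit $D$ of continuous-trace algebras with compact tree spectra together with a $\CCu$-isomorphism $\Cu(D)\to\Cu(A)$ matching the classes of strictly positive elements. The problem is that the entire content of the proof then lies in constructing $D$ and that isomorphism, and this is exactly the part you leave undone. Two concrete gaps: (1) you only track the single element $[s_A]$ through approximants $y_m$, but matching the class of a strictly positive element does not determine $\Cu(D)$ up to isomorphism --- you must realize the whole inductive system $\varinjlim\mathrm{Lsc}(X_{i_m},\N\cup\{\infty\})$ (or a system cofinal in it) by the semigroups $\bigoplus_j\mathrm{Lsc}(Y_{m,j},\N\cup\{\infty\})$ and the maps $\Cu(\sigma_m)$, which is a range-of-the-invariant/approximate-intertwining theorem for the invariant that is nowhere established in the paper and is not routine; (2) the ``collaring'' connecting maps $\sigma_m$ are only described informally, and you explicitly concede that arranging them so that the prescribed transitions are realised ``is the hard part.'' A proof cannot defer its hard part.

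The paper's argument is much more direct and avoids the classification machinery entirely. By Corollary 4 of \cite{c-e-i}, a hereditary subalgebra of a sequential inductive limit of stable rank one is itself a sequential inductive limit of hereditary subalgebras $A_i$ of the building blocks $B_i$; each $A_i$ is automatically a continuous-trace algebra whose spectrum is an open subset of the compact tree $\hat{B_i}$. One then uses fullness of $A$ and the existence of a nonzero projection in $B\otimes\K$ to see that $A_i\otimes\K$ eventually contains a nonzero projection, forcing $\hat{A_i}$ to be compact, hence a compact subtree, for large $i$. You should look for (or prove) a statement of the type of Corollary 4 of \cite{c-e-i}; with it in hand the proposition is a short structural argument, whereas your route requires building an existence theorem for inductive systems realizing a prescribed Cuntz semigroup, which is a substantially harder and independent result.
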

\begin{proof}
Let $B=\varinjlim B_i$, where each $B_i$ is a continuous-trace C*-algebras with spectrum homeomorphic to a tree. Let $A$ be a full hereditary subalgebra of $B$. Since $B$ has stable rank one, by Corollary 4 of \cite{c-e-i} the C*-algebra $A$ can be written as a sequential inductive limit of hereditary subalgebras of the C*-algebras $B_i$, $i=1,2,\cdots$. Let us denote these hereditary subalgebras by $A_i$, $i=1,2,\cdots$. It follows that each C*-algebra $A_i$ is a continuous-trace C*-algebra. Therefore, we have
\[
\varinjlim (\mathrm{C}_0(\hat{A_i})\otimes \K)\cong\varinjlim (A_i\otimes \K)=A\otimes\K\cong B\otimes \K.
\]
Since $B\otimes \K=\varinjlim (\mathrm{C}(\hat{B_i})\otimes\K)$, the C*-algebra $B\otimes \K$ contains a nonzero projection. It follows that for $i$ large enough the C*-algebras $A_i\otimes \K$ also contain  nonzero projections. Therefore, $\hat{A_i}$ is compact for $i$ large enough. Since for each $i=1,2,\cdots$, the C*-algebra $\mathrm{C}(\hat{A_i})\otimes \K$ is a hereditary subalgebra of $\mathrm{C}(\hat{B_i})\otimes \K$, and $\hat{B_i}$ is a compact tree, then  for $i$ large enough the set $\hat{A_i}$ is a compact tree. This concludes the proof of the proposition.
\end{proof}

\thanks{\textbf{Acknowledgments}}
The research of the second author was supported by the Natural Sciences and Engineering
Research Council of Canada. The third author was partially supported by DGI MICIIN-FEDER
MTM2008-06201-C02-01, and by the Comissionat per Universitats i
Recerca de la Generalitat de Catalunya. 

Part of this research was carried out in the 2008--2009 academic year while the first and third authors were postdoctoral fellows at the Fields Institute for Research in Mathematical Sciences with the support of Dr. G. A. Elliott's research grant. The first author acknowledges the support of an AARMS Postdoctoral Fellowship, while the third author acknowledges the support of a Juan de la Cierva Postdoctoral Fellowship.

\vspace{10pt}

\begin{bibdiv}
\begin{biblist}
\bib{b-p-t}{article}{
   author={Brown, N. P.},
   author={Perera, F.},
   author={Toms, A. S.},
   title={The Cuntz semigroup, the Elliott conjecture, and dimension
   functions on $C\sp *$-algebras},
   journal={J. Reine Angew. Math.},
   volume={621},
   date={2008},
   pages={191--211},
}

\bib{ciuperca}{article}{
   author={Ciuperca, A.},
   title={Some properties of the Cuntz semigroup and an isomorphism
   theorem for a certain class of non-simple $C\sp *$-algebras, {\rm Thesis,}},
   journal={University of Toronto, 2008},
   status={},
}

\bib{c-e}{article}{
   author={Ciuperca, Alin},
   author={Elliott, George A.},
   title={A remark on invariants for $C\sp *$-algebras of stable rank one},
   journal={Int. Math. Res. Not. IMRN},
   date={2008},
   number={5},
   pages={33},
}

\bib{c-e-i}{article}{
   author={Coward, K. T.},
   author={Elliott, G. A.},
   author={Ivanescu, C.},
   title={The Cuntz semigroup as an invariant for $C\sp *$-algebras},
   journal={J. Reine Angew. Math.},
   volume={623},
   date={2008},
   pages={161--193},
}

\bib{Cuntz}{article}{
   author={Cuntz, J.},
   title={Dimension functions on simple $C\sp*$-algebras},
   journal={Math. Ann.},
   volume={233},
   date={1978},
   number={2},
   pages={145--153},
}

\bib{Dixmier}{book}{
   author={Dixmier, Jacques},
   title={$C\sp*$-algebras},
   note={Translated from the French by Francis Jellett;
   North-Holland Mathematical Library, Vol. 15},
   publisher={North-Holland Publishing Co.},
   place={Amsterdam},
   date={1977},
}

\bib{elliott-AI}{article}{
   author={Elliott, G. A.},
   title={A classification of certain simple $C\sp *$-algebras},
   conference={
      title={Quantum and non-commutative analysis},
      address={Kyoto},
      date={1992},
   },
   book={
      series={Math. Phys. Stud., Vol. 16},
      publisher={Kluwer Acad. Publ.},
      place={Dordrecht},
   },
   date={1993},
   pages={373--385},
}

\bib{elliott-AT}{article}{
   author={Elliott, G. A.},
   title={On the classification of $C\sp *$-algebras of real rank zero},
   journal={J. Reine Angew. Math.},
   volume={443},
   date={1993},
   pages={179--219},
}

\bib{elliott-cancellation}{article}{
   author={Elliott, G. A.},
   title={Hilbert modules over a $C\sp *$-algebra of stable rank one},
   journal={C. R. Math. Acad. Sci. Soc. R. Can.},
   volume={29},
   date={2007},
   number={2},
   pages={48--51},
}

\bib{elliott-cla}{article}{
   author={Elliott, G. A.},
   title={Towards a theory of classification},
   journal={ Adv. Math.},
   volume={223},
   date={2010},
   number={1},
   pages={30--48},
}

\bib{e-ivan}{article}{
    author={Elliott, G. A.},
    author={Ivanescu, C.},
    title={The classification of separable simple C*-algebras which are inductive limits of continuous-trace C*-algebras with spectrum homeomorphic to the closed interval $[0,1]$},
    journal={J. Funct. Anal.},
    volume={254},
    date={2008},
    number={4},
    pages={879--903},
}

\bib{e-r-s}{article}{
author={Elliott, G. A.},
author={Robert, L.},
author={Santiago, L.},
title={The cone of lower semicontinuous traces on a $C\sp *$-algebra},
status={preprint, arXiv:0805.3122v2},
date={2008},
}

\bib{k-r}{article}{
   author={Kirchberg, E.},
   author={R{\o}rdam, M.},
   title={Infinite non-simple $C\sp *$-algebras: absorbing the Cuntz
   algebras $\scr O\sb \infty$},
   journal={Adv. Math.},
   volume={167},
   date={2002},
   number={2},
   pages={195--264},
}

\bib{Li}{article}{
   author={Li, L.},
   title={Classification of simple $C\sp *$-algebras: inductive limits of
   matrix algebras over trees},
   journal={Mem. Amer. Math. Soc.},
   volume={127},
   date={1997},
   number={605},
}

\bib{Loring}{book}{
   author={Loring, T. A.},
   title={Lifting solutions to perturbing problems in $C\sp *$-algebras},
   series={Fields Institute Monographs, Vol. 8},
   publisher={American Mathematical Society},
   place={Providence, RI},
   date={1997},
}

\bib{Loring1}{article}{
   author={Loring, T. A.},
   title={Stable relations. II. Corona semiprojectivity and dimension-drop
   $C\sp *$-algebras},
   journal={Pacific J. Math.},
   volume={172},
   date={1996},
   number={2},
   pages={461--475},
}

\bib{Pedersen}{book}{
   author={Pedersen, Gert K.},
   title={$C\sp{\ast} $-algebras and their automorphism groups},
   series={London Mathematical Society Monographs, Vol. 14},
   publisher={Academic Press Inc. [Harcourt Brace Jovanovich Publishers]},
   place={London},
   date={1979},
}

\bib{robert}{article}{
   author={Robert, L.},
   title={Classification of non-simple approximate interval
          $C\sp *$-algebras: the triangular case, {\rm Thesis,}},
   journal={University of Toronto, 2006},
   status={},
}

\bib{leonel}{article}{
   author={Robert, L.},
   title={The Cuntz semigroup of some spaces of dimension at most 2},
   journal={C. R. Math. Acad. Sci. Soc. R. Can.},
   status={to appear},

}

\bib{l-l}{article}{
   author={Robert, L.},
   author={Santiago, L.},
   title={On the classification of $C\sp*$-homomorphisms from $\mathrm{C}_0(0,1]$ to a $C\sp *$-algebra
of stable rank greater than 1},
   journal={ J. Funct. Anal.},
   status={to appear},
}

\bib{r-w}{article}{
   author={R{\o}rdam, M.},
   author={Winter, W.},
   title={The Jiang-Su algebra revisited},
   journal={ J. Reine Angew. Math.},
   status={to appear},
}

\bib{santiago}{article}{
   author={Santiago, L.},
   title={Classification of non-simple $C\sp *$-algebras: Inductive limits
   of splitting interval algebras, {\rm Thesis,}},
   journal={University of Toronto, 2008},
   status={},
}

\bib{stevens}{article}{
   author={Stevens, K. H.},
   title={The classification of certain non-simple approximate interval
   algebras},
   conference={
      title={Operator algebras and their applications, II (Waterloo, ON,
      1994/1995)},
   },
   book={
      series={Fields Inst. Commun., Vol. 20},
      publisher={Amer. Math. Soc.},
      place={Providence, RI},
   },
   date={1998},
   pages={105--148},
}

\bib{thomsen}{article}{
   author={Thomsen, K.},
   title={Inductive limits of interval algebras: unitary orbits of positive
   elements},
   journal={Math. Ann.},
   volume={293},
   date={1992},
   number={1},
   pages={47--63},
}

\bib{toms}{article}{
   author={Toms, A. S.},
   title={An infinite family of non-isomorphic $C\sp *$-algebras with
   identical $K$-theory},
   journal={Trans. Amer. Math. Soc.},
   volume={360},
   date={2008},
   number={10},
   pages={5343--5354},
}
\end{biblist}
\end{bibdiv}

\end{document}